\newtheorem{theorem}{Theorem}[section]
\newtheorem{claim}{}[theorem]
\newtheorem{lemma}[theorem]{Lemma}
\newtheorem{proposition}[theorem]{Proposition}
\newtheorem{conjecture}[theorem]{Conjecture}
\theoremstyle{definition}
\newtheorem{definition}[theorem]{Definition}
\newcommand{\bF}{\mathbb F}
\newcommand{\bZ}{\mathbb Z}
\newcommand{\cA}{\mathcal{A}}
\newcommand{\cB}{\mathcal{B}}
\newcommand{\cC}{\mathcal{C}}
\newcommand{\cI}{\mathcal{I}}
\newcommand{\cH}{\mathcal{H}}
\newcommand{\cM}{\mathcal{M}}
\newcommand{\cX}{\mathcal{X}}
\newcommand{\ep}{\epsilon}
\DeclareMathOperator{\cl}{cl}
\DeclareMathOperator{\PG}{PG}
\DeclareMathOperator{\LG}{LG}
\DeclareMathOperator{\GF}{GF}
\newcommand{\del}{\!\setminus\!}
\title[A new matroid lift construction]{A new matroid lift construction and an application to group-labeled graphs}
\author{Zach Walsh}
\address{Mathematics Department, Louisiana State University, Baton Rouge, Louisiana, USA}
\email{walsh@lsu.edu}
\date{\today}
\begin{document}

\begin{abstract} 
A well-known result of Brylawski constructs an elementary lift of a matroid $M$ from a linear class of circuits of $M$.
We generalize this result by showing how to construct a rank-$k$ lift of $M$ from a rank-$k$ matroid on the set of circuits of $M$.
We conjecture that every lift of $M$ arises via this construction.

We then apply this result to group-labeled graphs, generalizing a construction of Zaslavsky.
Given a graph $G$ with edges labeled by a group, Zaslavsky's lift matroid $K$ is an elementary lift of the graphic matroid $M(G)$ that respects the group-labeling; specifically, the cycles of $G$ that are circuits of $K$ coincide with the cycles that are balanced with respect to the group-labeling.
For $k \ge 2$, when does there exist a rank-$k$ lift of $M(G)$ that respects the group-labeling in this same sense?
For abelian groups, we show that such a matroid exists if and only if the group is isomorphic to the additive group of a non-prime finite field. 
\end{abstract}

\maketitle

%%% introduction

\begin{section}{Introduction}
This paper is concerned with using structure on the circuits of a matroid $M$ to construct a new matroid on the ground set of $M$.
For example, a collection $\cC$ of circuits of a matroid $M$ is a \emph{linear class} if, whenever $C_1$ and $C_2$ are circuits in $\cC$ so that $|C_1\cup C_2|-r_M(C_1\cup C_2)=2$, then every circuit $C$ of $M$ contained in $C_1\cup C_2$ is also in $\cC$.
Brylawski \cite[Prop. 7.4.15]{Brylawski} applied the dual of a construction of Crapo \cite{Crapo} to show that, for each linear class $\cC$ of circuits of a matroid $M$, one can construct a matroid $M'$ on $E(M)$ with rank at most one greater than $r(M)$.
Moreover, this matroid $M'$ is an \emph{elementary lift} of $M$, so there is a matroid $P$ with ground set $E(M) \cup \{e\}$ so that $P \del e = M'$ and $P / e = M$.

\begin{theorem}[Brylawski '86] \label{brylawski}
Let $M$ be a matroid, and let $\cC$ be a linear class of circuits of $M$. 
Then the function $r_{M'}$ defined, for all $X \subseteq E(M)$, by 
$$r_{M'}(X)=\begin{cases}
r_M(X) & \text{ if each circuit of $M|X$ is in $\cC$},  \\
r_M(X)+1 & \text{ otherwise} 
\end{cases}$$
is the rank function of an elementary lift $M'$ of $M$.
Moreover, every elementary lift of $M$ can be obtained in this way.
\end{theorem}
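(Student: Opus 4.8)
The plan is to prove both parts by manipulating rank functions directly. Write $E = E(M)$ and set $\delta(X) = r_{M'}(X) - r_M(X)$; by construction $\delta$ is $\{0,1\}$-valued, and $\delta$ is monotone because a circuit of $M|X$ lying outside $\cC$ is also a circuit of $M|Y$ for every $Y \supseteq X$. For the first part I would verify the rank axioms for $r_{M'}$. Nonnegativity and $r_{M'}(X)\le|X|$ are immediate (if $\delta(X)=1$ then $M|X$ has a circuit, so $|X|>r_M(X)$), and monotonicity of $r_{M'}$ follows from monotonicity of $r_M$ and of $\delta$.

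Submodularity is the only place the linear-class hypothesis enters. Since $r_M$ is submodular and $\delta$ is monotone and $\{0,1\}$-valued, a short case check reduces submodularity of $r_{M'}$ at a pair $(A,B)$ to the following claim: if $(A,B)$ is a modular pair of $M$, i.e.\ $r_M(A)+r_M(B) = r_M(A\cup B) + r_M(A\cap B)$, and every circuit of $M|A$ and of $M|B$ lies in $\cC$, then every circuit of $M|(A\cup B)$ lies in $\cC$. To prove this I would restrict to $M|(A\cup B)$, so that $A\cup B = E$; a circuit inside $A$ or inside $B$ is already in $\cC$, so let $C$ be a circuit meeting both $A\setminus B$ and $B\setminus A$. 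It then suffices to find circuits $C_1\subseteq A$ and $C_2\subseteq B$ with $C\subseteq C_1\cup C_2$ and $|C_1\cup C_2| - r_M(C_1\cup C_2) = 2$, for then $C_1,C_2\in\cC$ and the defining property of a linear class gives $C\in\cC$. To locate such $C_1,C_2$ I would use modularity of $(A,B)$ — for instance, by choosing a basis $B_0$ of $M$ for which $B_0\cap A$ and $B_0\cap B$ are bases of $M|A$ and $M|B$ (such a basis exists because $(A,B)$ is a modular pair) and analysing the fundamental circuits of the elements of $C\setminus B_0$. This claim is the crux of the argument; it is the dual of the combinatorial core of Crapo's modular-cut construction, and I expect it to be the main obstacle.

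Granting that $r_{M'}$ is a matroid rank function, I would exhibit the lift explicitly: define $r_P$ on $E\cup\{e\}$ by $r_P(X) = r_{M'}(X)$ if $e\notin X$ and $r_P(X) = r_M(X\setminus e)+1$ if $e\in X$. Checking that $r_P$ is a rank function only requires the submodular inequality in the cases where at least one of the two sets contains $e$, and these follow routinely from submodularity of $r_M$, from $r_{M'}\ge r_M$, and from monotonicity of $\delta$, with no further appeal to the linear-class axiom. Since $r_P(\{e\})=1$ we get $P\con e = M$ and $P\del e = M'$, so $M'$ is an elementary lift of $M$.

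For the final assertion, let $M'$ be an elementary lift with witness $P$, so $P\del e = M'$ and $P\con e = M$, and put $\cC = \{C : C \text{ is a circuit of both } M \text{ and } M'\}$. Submodularity of $r_P$ yields $r_{M'}(X) - r_{M'}(A) \ge r_M(X) - r_M(A)$ for all $A\subseteq X\subseteq E$. A circuit $C$ of $M$ that is not a circuit of $M'$ must be independent in $M'$ (each proper subset of $C$ has equal rank in $M$ and $M'$), so $r_{M'}(C) = r_M(C)+1$; combining this with $r_{M'}\le r_M+1$, the displayed inequality, and a fundamental-circuit argument shows that $r_{M'}(X) = r_M(X)+1$ exactly when $M|X$ has a circuit outside $\cC$, which is the claimed formula. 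Finally, if $C_1,C_2\in\cC$ with $|C_1\cup C_2| - r_M(C_1\cup C_2)=2$ and $C\subseteq C_1\cup C_2$ is a circuit, then submodularity of $r_{M'}$ forces $r_{M'}(C_1\cup C_2) = r_M(C_1\cup C_2)$, and the displayed inequality with $X = C_1\cup C_2$ forces $r_{M'}(C) = r_M(C)$, so $C$ is a circuit of $M'$ and hence lies in $\cC$. Thus $\cC$ is a linear class, completing the argument.
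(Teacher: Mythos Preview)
The paper does not give its own proof of this theorem; it is quoted from Brylawski. The first assertion is recovered as the $r(N)=1$ case of the paper's Theorem~\ref{construction}, whose proof verifies the independent-set axioms (via fundamental circuits and perfect collections) rather than the rank axioms, and the ``Moreover'' clause is not proved in the paper at all. So there is no like-for-like comparison to make. That said, your proposal has a real gap in the forward direction.

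Your case-check correctly reduces submodularity of $r_{M'}$ to the claim that if $(A,B)$ is a modular pair in $M$ and every circuit of $M|A$ and of $M|B$ lies in $\cC$, then every circuit of $M|(A\cup B)$ lies in $\cC$. The claim is true, but your proposed proof --- find circuits $C_1\subseteq A$, $C_2\subseteq B$ with $C\subseteq C_1\cup C_2$ and $|C_1\cup C_2|-r_M(C_1\cup C_2)=2$, then invoke the linear-class axiom once --- does not work in general. Take $M=U_{2,6}$ on $\{1,\dots,6\}$, $A=\{1,2,3,4\}$, $B=\{3,4,5,6\}$; this pair is modular, and $C=\{1,2,5\}$ is a circuit meeting both $A\setminus B$ and $B\setminus A$. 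Any circuit $C_1\subseteq A$ containing $\{1,2\}$ meets $A\cap B=\{3,4\}$ in at most one element, so $|C_1\cap C_2|\le 1$ for every circuit $C_2\subseteq B$, forcing $|C_1\cup C_2|\ge 5$ and hence corank at least $3$. No single pair works. What is needed is iteration: e.g.\ from $\{1,3,4\},\{3,4,5\}\in\cC$ deduce $\{1,3,5\}\in\cC$, and then from $\{1,2,3\},\{1,3,5\}\in\cC$ deduce $\{1,2,5\}\in\cC$. An inductive argument along these lines (or, dually, the modular-cut machinery you allude to) is required; a direct one-shot appeal to the axiom is not available. Your treatment of the construction of $P$ and of the converse, on the other hand, is correct as written.
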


Note that $\cC$ is precisely the set of circuits of $M$ that are also circuits of $M'$.
Zaslavsky \cite{Z2} applied this construction to group-labeled graphs (also called gain graphs), by showing that for each graph $G$ with edges labeled by a group, one can use the group-labeling to obtain a natural linear class $\cB$ of circuits of the graphic matroid $M(G)$.
The circuits in $\cB$ are the \emph{balanced} cycles of $G$ with respect to the group-labeling, and the pair $(G,\cB)$ is a \emph{biased graph}.
So, via Brylawski's construction, each graph $G$ with edges labeled by a group naturally leads to an elementary lift $M$ of $M(G)$; Zaslavsky calls this the \emph{lift matroid} of the biased graph $(G,\cB)$.
We will define this terminology in greater detail in Section 2.
Note that $M$ respects the group-labeling in the sense that a cycle of $G$ is a circuit of $M$ if and only if it is balanced.

The goal of this paper is to generalize the constructions of Brylawski and Zaslavsky to rank-$k$ lifts for $k\ge 2$.
A matroid $K$ is a \emph{lift} of a matroid $M$ if there is a matroid $P$ and a subset $X$ of $E(P)$ so that $P\del X=K$ and $P/X=M$.
If $r(K)-r(M)=k$, then $K$ is a \emph{rank-$k$ lift} of $M$.
We prove the following generalization of Brylawski's construction, by defining a lift of a matroid $M$ using a given matroid $N$ on the set of circuits of $M$.
Just as Brylawski's construction only applies to a linear class of circuits, the matroid $N$ must satisfy some necessary dependencies of the circuits of $M$.
We say that a collection $\cC'$ of circuits of $M$ is \emph{perfect} if $|\cup_{C\in\cC'} C|-r_M(\cup_{C\in\cC'} C)=|\cC'|$, and no circuit in $\cC'$ is contained in the union of the others.
The matroid $N$ must satisfy the natural condition that if a circuit $C$ of $M$ is contained in the union of the circuits of a perfect collection $\cC'$, then $C$ is spanned by $\cC'$ in $N$.

\begin{theorem}\label{main}
Let $M$ be a matroid, and let $N$ be a matroid on the set of circuits of $M$ so that 
\begin{enumerate}[$(\ref{main}*)$]
\item if $\cC'$ is a perfect collection of circuits of $M$, then each circuit $C'$ of $M$ contained in $\cup_{C\in\cC'}C$ satisfies $C'\in\cl_N(\cC')$.
\end{enumerate}
Then the function $r_{M^N}$ defined, for all $X \subseteq E(M)$, by 
$$r_{M^N}(X)=r_M(X)+r_N(\{C \colon \textrm{$C$ is a circuit of $M|X$}\})$$ 
is the rank function of a rank-$r(N)$ lift $M^N$ of $M$.
\end{theorem}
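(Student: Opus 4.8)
Write $\rho:=r_{M^N}$, and for $X\subseteq E(M)$ write $\cK_X$ for the set of circuits of $M$ contained in $X$ (equivalently, the circuits of $M|X$), so that $\rho(X)=r_M(X)+r_N(\cK_X)$. Given a basis $B$ of $M|X$ and $h\in X\setminus B$, let $C(h,B)$ denote the fundamental circuit of $h$ with respect to $B$. The single technical input I would establish first is: for any such $X$, $B$ and any $S\subseteq X\setminus B$, the family $\{C(h,B):h\in S\}$ is a perfect collection of circuits of $M$, and every circuit $C$ of $M|X$ satisfies $C\subseteq\bigcup_{h\in C\setminus B}C(h,B)$. The first assertion is routine: the family has exactly $|S|$ members and no member lies in the union of the others (each $h$ is the unique element of $C(h,B)$ outside $B$), and writing $Z$ for its union one has $Z\subseteq\cl_M(Z\cap B)$, so $r_M(Z)=|Z\cap B|$, $Z\setminus B=S$, and $|Z|-r_M(Z)=|S|$. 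For the second, if some $b\in C\cap B$ lay in no $C(h,B)$ with $h\in C\setminus B$, then $C(h,B)\subseteq(B\setminus b)\cup\{h\}$ for each such $h$, hence $C\setminus b\subseteq\cl_M(B\setminus b)$ and so $b\in\cl_M(C\setminus b)\subseteq\cl_M(B\setminus b)$, contradicting independence of $B$. Combining this with hypothesis $(\ref{main}*)$, every circuit $C$ of $M|X$ lies in $\cl_N(\{C(h,B):h\in C\setminus B\})$.

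To prove $\rho$ is a matroid rank function I would use the characterization that a set function with $\rho(\emptyset)=0$ is a matroid rank function if and only if it is monotone, satisfies $\rho(X)\le\rho(X+e)\le\rho(X)+1$, and satisfies the exchange condition: $\rho(X+e)=\rho(X)=\rho(X+f)$ implies $\rho(X+e+f)=\rho(X)$. Monotonicity is immediate. For the unit-increase bound, if $e\notin\cl_M(X)$ then no circuit of $M|(X+e)$ contains $e$, so $\cK_{X+e}=\cK_X$ while $r_M$ increases by one; if $e\in\cl_M(X)$, fix a basis $B$ of $M|X$, which is also a basis of $M|(X+e)$, and note by the first paragraph that every circuit of $M|(X+e)$ lies in $\cl_N(\cK_X\cup\{C(e,B)\})$ — a circuit avoiding $e$ is already in $\cK_X$, and for a circuit $C\ni e$ the perfect family $\{C(h,B):h\in C\setminus B\}$ is contained in $\cK_X\cup\{C(e,B)\}$ — so $r_N(\cK_{X+e})\le r_N(\cK_X)+1$ while $r_M$ is unchanged.

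For the exchange condition, $\rho(X+e)=\rho(X)$ forces $e\in\cl_M(X)$ by the previous case analysis, and then $r_N(\cK_{X+e})=r_N(\cK_X)$, i.e.\ $\cK_{X+e}\subseteq\cl_N(\cK_X)$; likewise for $f$. Fixing a basis $B$ of $M|X$ — which, since $e,f\in\cl_M(X)$, is also a basis of $M|(X+e+f)$ — the first paragraph shows that for any circuit $C$ of $M|(X+e+f)$ the perfect family $\{C(h,B):h\in C\setminus B\}$ consists of circuits each contained in $X$, in $X+e$, or in $X+f$, hence each lying in $\cl_N(\cK_X)$; therefore $C\in\cl_N(\cK_X)$. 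Thus $r_N(\cK_{X+e+f})=r_N(\cK_X)$, and with $r_M(X+e+f)=r_M(X)$ we get $\rho(X+e+f)=\rho(X)$. So $\rho$ is the rank function of a matroid $M^N$ on $E(M)$.

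Finally, $M^N$ is a lift of $M$: a matroid on $E(M)$ is a lift of $M$ exactly when every flat of $M$ is also one of its flats, so it suffices to observe that for a flat $F$ of $M$ and $e\in E(M)\setminus F$ we have $r_M(F+e)=r_M(F)+1$ and $\cK_F\subseteq\cK_{F+e}$, whence $\rho(F+e)>\rho(F)$ and $F$ is a flat of $M^N$. Since $r(M^N)=\rho(E(M))=r(M)+r_N(\cK_{E(M)})=r(M)+r(N)$, it is a rank-$r(N)$ lift. The real content is the first paragraph — isolating the fundamental-circuit lemma and recognizing that $(\ref{main}*)$ is exactly what converts it into control of $r_N(\cK_X)$; everything afterward is bookkeeping against the rank-function and lift characterizations.
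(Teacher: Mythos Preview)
Your proof is correct, and it takes a genuinely different organizational route from the paper. The paper proves a stronger statement (its Theorem~3.2): it defines $M^N$ via a family $\cI$ of independent sets (those $X$ admitting $|X|-r_M(X)$ circuits of $M|X$ independent in $N$), verifies the hereditary and augmentation axioms for $\cI$, and then derives the rank formula afterward. You instead attack the rank function directly, checking the local rank axioms $\rho(\varnothing)=0$, unit increase, and local submodularity. Both arguments rest on the same core observation---fundamental circuits with respect to a basis of $M|X$ form a perfect family, so hypothesis $(\ref{main}*)$ controls $r_N(\cK_X)$---and this is essentially the paper's Lemma~3.1 and Claim~3.2.1. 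But you isolate a sharper form, namely $C\subseteq\bigcup_{h\in C\setminus B}C(h,B)$ for each individual circuit $C$, which lets you handle the two-element exchange step cleanly without re-running an augmentation argument. For the lift conclusion, the paper shows every circuit of $M^N$ is a union of circuits of $M$, while you use the equivalent flat-preservation characterization; both are standard (Oxley, Prop.~7.3.6). Your route is tidier for the statement as given; the paper's approach buys an explicit description of the independent sets of $M^N$, which it uses elsewhere.
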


We remark that the case when $r(N) = 1$ is exactly Brylawski's construction.
%To see this, note that the circuits of $M$ that are loops of $N$ are precisely the circuits of $M$ that are also circuits of $M^N$.
To see this, note that if $C_1$ and $C_2$ are loops of $N$ for which $|C_1 \cup C_2| - r_M(C_1 \cup C_2) = 2$, then condition $(\ref{main}*)$ implies that every circuit $C$ of $M$ contained in $C_1 \cup C_2$ is also a loop of $N$.
This implies that the set of circuits of $M$ that are loops of $N$ is a linear class.
From the rank function of $M^N$, we see that the loops of $N$ are precisely the circuits of $M$ that are also circuits of $M^N$.
So when $r(N) = 1$, the matroid $M^N$ is equal to the matroid $M'$ obtained by applying Theorem \ref{brylawski} with the linear class of loops of $N$.

As an example of Theorem \ref{main} when $r(N) > 1$, let $M = U_{1,n}$.
Then the set of circuits of $M$ is the set of $2$-element subsets of $[n]$, which is the edge set of the complete graph $K_n$.
So we can think of the matroid $N = M(K_n)$ as a matroid on the set of circuits of $U_{1,n}$.
It turns out that $M(K_n)$ satisfies $(\ref{main}*)$.
To see this, let $\cC'$ be a perfect collection of circuits of $U_{1,n}$.
Let $H$ be the corresponding subgraph of $K_n$, so $E(H) = \cC'$ and $V(H) = \cup \cC'$.
Then each edge $\{i,j\}$ of $H$ is a leaf of $H$, or else the circuit $\{i,j\}$ of $U_{1,n}$ is contained in the union of the other circuits in $\cC'$.
In particular, $H$ has no cycles.
Also, $|\cup_{C\in\cC'}C| - 1 = |\cC'|$, because $\cC'$ is perfect and $r_M(\cup_{C\in\cC'}C) = 1$.
Since $\cup_{C\in\cC'}C = V(H)$ and $\cC' = E(H)$, this implies that $V(H) = E(H) + 1$. 
Then since $H$ is acyclic, this implies that $H$ is a tree.
Since $H$ is a tree for which every edge is a leaf, it follows that $H$ is a star.
If a circuit $C' = \{i,j\}$ of $U_{1,n}$ is contained in $\cup_{C\in\cC'}C$, then $i,j \in V(H)$, so the edge $\{i,j\}$ is spanned in $N$ by $E(H)$.
Equivalently, $C'$ is spanned in $N$ by $\cC'$.
Therefore, $N = M(K_n)$ satisfies $(\ref{main}*)$, so by Theorem \ref{main} we can construct the matroid $U_{1,n}^{M(K_n)}$.
This matroid has $n$ elements and rank $r(U_{1,n}) + r(M(K_n)) = 1 + (n - 1) = n$, and is thus the free matroid on ground set $[n]$.

We apply Theorem \ref{main} to group-labeled graphs, taking the matroid $M$ to be the cycle matroid of the underlying graph, and using the group-labeling to construct the matroid $N$, just as Zaslavsky uses the group-labeling to construct the balanced cycles.
We remark that, given a graph $G$ and a set $\cB$ of balanced cycles with respect to a group-labeling of $G$, we can define a rank-$1$ matroid $N$ on the circuits of $M(G)$, where each cycle in $\cB$ is a loop of $N$, and every other cycle of $G$ is a non-loop of $N$.
Then the matroid $M(G)^N$ is exactly Zaslavsky's lift matroid of the biased graph $(G, \cB)$.
For certain finite groups, we will construct a matroid $N$ of rank at least two on the circuits of $M(G)$ using the group-labeling.
For each finite group $\Gamma$ and integer $n\ge 3$, we only consider the \emph{fully $\Gamma$-labeled} graph $K_n^{\Gamma}$, which has vertex set $[n]$ and edge set ${[n]\choose 2}\times \Gamma$.
%This graph has a distinguished collection of cycles called \emph{balanced} cycles, which arise naturally from the $\Gamma$-labeling.
To ensure that the lift $M$ of $M(K_n^{\Gamma})$ respects the $\Gamma$-labeling, we require that the cycles of $K_n^{\Gamma}$ that are circuits of $M$ are precisely the balanced cycles of $K_n^{\Gamma}$.
If $\Gamma$ is isomorphic to $\bZ_p^j$ (the direct sum of $j$ copies of the cyclic group of order $p$) for some prime $p$ and some integer $j\ge 2$, then such a matroid $M$ exists.

\begin{theorem} \label{groups construction}
Let $p$ be a prime, and let $n\ge 3$ and $j\ge 2$ be integers.
For each integer $i$ with $1\le i\le j$, there is a rank-$i$ lift $M$ of $M(K_n^{\bZ_p^j})$ so that a cycle of $K_n^{\bZ_p^j}$ is a circuit of $M$ if and only if it is balanced.
\end{theorem}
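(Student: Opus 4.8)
The plan is to apply Theorem~\ref{main} to $M(K_n^{\bZ_p^j})$ with a suitable rank-$i$ matroid $N$ on its set of circuits, built from the group-labeling together with the Frobenius structure of a finite field.

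Write $\Gamma=\bZ_p^j$ and identify $\Gamma$ with the additive group of the field $\bF:=\GF(p^j)$, a $j$-dimensional vector space over $\bF_p$. Fix an orientation of $K_n^\Gamma$ and, for each cycle $C$ of $K_n^\Gamma$ (equivalently, each circuit of $M(K_n^\Gamma)$), let $\gamma(C)\in\bF$ be its gain, so that $C$ is balanced if and only if $\gamma(C)=0$. Set
$$v(C)=\bigl(\gamma(C),\,\gamma(C)^{p},\,\gamma(C)^{p^{2}},\,\dots,\,\gamma(C)^{p^{i-1}}\bigr)\in\bF^{i},$$
and let $N$ be the $\bF$-representable matroid on the set of circuits of $M(K_n^\Gamma)$ in which $C$ is represented by $v(C)$ (reversing the orientation of a cycle negates $v(C)$, so $N$ is independent of these choices, and cycles with equal gains are parallel in $N$). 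Since every element of $\bF$ occurs as a gain — for $n\ge 3$, a triangle with edge-gains $x,0,0$ has gain $x$ — and since for any $\bF_p$-linearly independent $x_1,\dots,x_i\in\bF$ (which exist as $i\le j$) the Moore matrix $\bigl(x_k^{p^{t}}\bigr)_{1\le k\le i,\ 0\le t\le i-1}$ is nonsingular, we get $r(N)=i$. Also $v(C)=0$ if and only if $\gamma(C)=0$, so the loops of $N$ are precisely the balanced cycles.

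The main point is to verify condition $(\ref{main}*)$. Let $\cC'=\{C_1,\dots,C_m\}$ be a perfect collection of circuits of $M(K_n^\Gamma)$, let $H$ be the subgraph with edge set $\bigcup_{C\in\cC'}C$, and for a cycle $C\subseteq H$ let $[C]\in\bF_p^{E(H)}$ be its signed edge-indicator, which lies in the cycle space $Z(H;\bF_p)=\ker\bigl(\partial\colon\bF_p^{E(H)}\to\bF_p^{V(H)}\bigr)$. If there were a nontrivial relation $\sum_k a_k[C_k]=0$ with, say, $a_1\ne 0$, then $[C_1]=\sum_{k\ge 2}(-a_1^{-1}a_k)[C_k]$, so every edge of $C_1$ has a nonzero coordinate on the right and hence lies in some $C_k$ with $k\ge 2$, giving $C_1\subseteq\bigcup_{k\ge 2}C_k$, contrary to the definition of perfect. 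Hence $[C_1],\dots,[C_m]$ are $\bF_p$-linearly independent; and as $\cC'$ is perfect, $\dim Z(H;\bF_p)=|E(H)|-r_{M(K_n^\Gamma)}(E(H))=|\cC'|=m$, so the $[C_k]$ form a basis of $Z(H;\bF_p)$. Consequently any cycle $C'\subseteq H$ of $K_n^\Gamma$ satisfies $[C']=\sum_k b_k[C_k]$ with all $b_k\in\bF_p$; applying the additive gain homomorphism gives $\gamma(C')=\sum_k b_k\gamma(C_k)$, and since $x\mapsto x^{p}$ is $\bF_p$-linear and $b_k^{p^{t}}=b_k$ for all $t$, we obtain $v(C')=\sum_k b_k v(C_k)$, i.e.\ $C'\in\cl_N(\cC')$. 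This verifies $(\ref{main}*)$.

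By Theorem~\ref{main}, $M:=M(K_n^\Gamma)^N$ is a rank-$r(N)=i$ lift of $M(K_n^\Gamma)$. For a cycle $C$ of $K_n^\Gamma$ we have $r_M(C)=r_{M(K_n^\Gamma)}(C)+r_N(\{C\})=(|C|-1)+r_N(\{C\})$, while every proper subset of $C$ is independent in $M(K_n^\Gamma)$ and hence in $M$; so $C$ is a circuit of $M$ exactly when $r_N(\{C\})=0$, i.e.\ exactly when $C$ is a loop of $N$, i.e.\ exactly when $C$ is balanced, as required. The main obstacle is the verification of $(\ref{main}*)$: one must track the $\bF_p$-linear algebra of the cycle space carefully (including signs when $p$ is odd), and the crucial feature is that the combining coefficients $b_k$ land in the prime field $\bF_p$, which is precisely what makes the Frobenius coordinates of $v$ transform $\bF_p$-linearly.
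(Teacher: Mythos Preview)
Your proof is correct but follows a genuinely different route from the paper. The paper defines $N$ by sending each cycle to its gain as a point of $\PG(j-1,p)$ and then truncating to rank $i$; your Frobenius representation over $\GF(p^j)$ in fact yields the same matroid---by the Moore determinant identity, the $\GF(p^j)$-rank of $\{v(C):C\in\cC\}$ equals $\min\bigl(i,\dim_{\GF(p)}\textrm{span}\{\gamma(C):C\in\cC\}\bigr)$---but yours comes with an explicit linear representation rather than as a truncation. The real divergence is in verifying $(\ref{main}*)$. The paper first proves it at full rank $j$ via an induction on $|\cC'|$, in a minimal counterexample locating a theta subgraph through the target cycle and using the additive gain relation around the theta, and then invokes the general fact that $(\ref{main}*)$ is preserved under projections of $N$ to descend to rank $i$. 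You instead argue directly at rank $i$: a perfect collection is a $\GF(p)$-basis of the cycle space of $H$, the gain is a $\GF(p)$-linear functional on that space, and the $\GF(p)$-linearity of Frobenius transports the relation $[C']=\sum_k b_k[C_k]$ to $v(C')=\sum_k b_k v(C_k)$. Your argument is shorter and more conceptual, and it makes transparent why the construction needs $\Gamma$ to be elementary abelian; the paper's theta-graph argument is more elementary and self-contained, needing no cycle-space language.
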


Surprisingly, these are the only finite abelian groups for which this construction is possible.
If $\Gamma$ is a finite abelian group that is not isomorphic to $\bZ_p^j$ for some prime $p$ and integer $j\ge 2$, then there is no rank-$k$ lift of $M(K_n^{\Gamma})$ with $k\ge 2$ that respects the group-labeling.

\begin{theorem} \label{groups converse}
Let $\Gamma$ be a nontrivial finite abelian group, and let $n\ge 3$ be an integer.
Let $M$ be a lift of $M(K_n^{\Gamma})$ so that a cycle of $K_n^{\Gamma}$ is a circuit of $M$ if and only if it is balanced.
Then either $\Gamma\cong \bZ_p^j$ for some prime $p$ and integer $j\ge 2$, or $M$ is an elementary lift of $M(K_n^{\Gamma})$.
\end{theorem}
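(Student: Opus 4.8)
The plan is to assume that $M$ is neither $M(K_n^\Gamma)$ nor an elementary lift of it and to deduce $\Gamma\cong\bZ_p^j$ with $j\ge2$. If $r(M)=r(M(K_n^\Gamma))$ then, taking a lift presentation with independent deleted set, one finds $M=M(K_n^\Gamma)$, so every cycle is balanced and $\Gamma$ is trivial; hence we may assume $k:=r(M)-r(M(K_n^\Gamma))\ge2$. Write $G=K_n^\Gamma$ and fix a matroid $P$ on $E(G)\cup X$ with $X$ independent, $|X|=k$, and $E(G)$ spanning, such that $P\del X=M$ and $P/X=M(G)$; put $F:=\cl_P(X)$, a rank-$k$ flat disjoint from $E(G)$ with no loops. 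Two formal observations will be used throughout: a cycle of $G$ is a circuit of $M$ exactly when balanced, and an unbalanced cycle is independent in $M$ since its proper subsets are forests; and, by submodularity in $P$, for every subgraph $H$ of $G$ one has $r_P(E(H)\cup X)=r_{M(G)}(E(H))+k$, so $\cl_P(E(H))\cap F$ is a flat of rank at most $r_M(E(H))-r_{M(G)}(E(H))$, which is trivial when $H$ is a forest.

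The heart of the argument is to attach to each unbalanced cycle $O$ a single point $\pi(O)$ of $F$ (morally $\cl_P(O)\cap F$) and to show that $\pi(O)$ depends only on the imbalance $\delta(O)\in\Gamma$, defined up to sign. Its structure is then read off from theta subgraphs. If $\Theta$ is a theta subgraph of $G$ with cycles $O_1,O_2,O_3$, then $r_{M(G)}(E(\Theta))=|E(\Theta)|-2$, so $\cl_P(E(\Theta))\cap F$ has rank at most $2$ and contains $\pi(O_i)$ for each unbalanced $O_i$; hence the points $\pi(O_i)$ are collinear in $F$. If moreover some $O_i$ is balanced then $E(\Theta)$ contains a circuit of $M$, so $r_M(E(\Theta))\le|E(\Theta)|-1$ and $\cl_P(E(\Theta))\cap F$ has rank at most $1$, forcing the $\pi$-images of the remaining $O_i$ to coincide. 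Applying this to theta subgraphs realising imbalances $a$, $b$ and $a+b$ — built from cycles of length $2$ and $3$ through a few fixed vertices, using parallel edges when $n=3$ — one obtains a map $\bar\pi\colon\Gamma\to F$ with $\bar\pi(0)$ trivial, $\bar\pi(g)$ a point of $F$ for $g\ne0$, $\bar\pi(-g)=\bar\pi(g)$, with $\bar\pi(a+b)$ spanned by $\bar\pi(a)$ and $\bar\pi(b)$, and — crucially — $\bar\pi(a+b)=\bar\pi(a)$ whenever $\bar\pi(a)=\bar\pi(b)$. Finally, writing each edge of $G$ through its fundamental cycle relative to the star spanning tree $\{(1v,0):2\le v\le n\}$ shows that this tree together with the points $\bar\pi(g)$ spans $P$; as the tree is skew to $F$, the points $\bar\pi(g)$ span $F$, which has rank $k\ge2$.

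The group-theoretic conclusion is short. For each point $\mathbf p$ in the image of $\bar\pi$, the set $H_{\mathbf p}:=\{g\in\Gamma:\bar\pi(g)=\mathbf p\}\cup\{0\}$ is a subgroup of $\Gamma$: it is closed under negation since $\bar\pi(-g)=\bar\pi(g)$, and closed under addition by the property $\bar\pi(a)=\bar\pi(b)\Rightarrow\bar\pi(a+b)=\bar\pi(a)$ together with the case $a+b=0$. These subgroups cover $\Gamma$, pairwise meet in $\{0\}$, and are all proper because $F$ has rank $\ge2$; so $\Gamma$ admits a nontrivial partition into subgroups. A brief argument then shows $\Gamma$ is elementary abelian: $\Gamma$ has a single primary component (if $x,x'$ have distinct prime orders $\ell,\ell'$, then $\ell'x=\ell'(x+x')$ is a nonzero element of $H_{\bar\pi(x)}\cap H_{\bar\pi(x+x')}$, so $\bar\pi(x)=\bar\pi(x+x')$, and iterating makes $\bar\pi$ constant, contrary to $r(F)\ge2$); and $\Gamma$ has no element of order $p^2$ (if $\ord(g)=p^2$, pick a nonzero $h$ of order $p$ with $\bar\pi(h)\ne\bar\pi(g)$; then $pg=p(g+h)$ is a nonzero element of $H_{\bar\pi(g)}\cap H_{\bar\pi(g+h)}$, contradicting $\bar\pi(g)\ne\bar\pi(g+h)$). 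Hence $\Gamma\cong\bZ_p^j$, and $j\ge2$ because a cyclic $\Gamma$ would make $\bar\pi$ constant.

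I expect the main obstacle to be the construction of $\pi$: extracting from the abstract lift $M$ a well-defined point of $F$ for each unbalanced cycle that factors through the imbalance in $\Gamma$. The delicate points are that $\cl_P(O)\cap F$ being a single point — rather than trivial — for an unbalanced cycle $O$ is not automatic for an arbitrary presentation $P$ and must be secured, perhaps by a careful choice of $P$; and that one must produce enough theta subgraphs realising the needed imbalance patterns, including degenerate configurations and the tight case $n=3$. Once $\pi$ and its near-linearity are in hand, the submodular identities above and the group-theoretic step complete the proof.
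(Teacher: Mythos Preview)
Your overall architecture is right and, once the map $\bar\pi$ exists, your group-theoretic endgame is essentially the paper's: the paper defines $\alpha\sim\beta$ on $\Gamma-\{\ep\}$ by $r_M(E_{\{\alpha,\beta,\ep\}})=n$ (where $E_A$ is the set of edges with label in $A$), shows each class together with $\ep$ is a subgroup, and then runs exactly your two reductions (coprime orders force equivalence; no element has order $p^2$). One small slip in your write-up: from $\bar\pi(h)\ne\bar\pi(g)$ you do not directly know $\bar\pi(g)\ne\bar\pi(g+h)$; rather, having derived $\bar\pi(g)=\bar\pi(g+h)$ you conclude $h=(g+h)-g\in H_{\bar\pi(g)}$ and hence $\bar\pi(h)=\bar\pi(g)$, which is the contradiction.

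The substantive divergence is in how the partition of $\Gamma$ is produced, and the obstacle you flag is genuine. The modular defect $r_P(O)+r_P(X)-r_P(O\cup X)=1$ does \emph{not} force $\cl_P(O)\cap F$ to contain a point in a non-modular matroid, and there is no evident choice of presentation $P$ that secures this simultaneously for all unbalanced cycles. The paper never faces this, because it never introduces $F$ or $\pi$: it works entirely inside $M$, using explicit balanced triangles to prove $r_M(E_{\{\alpha,\ep\}})=n$ for each nonidentity $\alpha$ (your ``$\pi(O)$ depends only on the imbalance'') and $E_{\langle A\rangle}\subseteq\cl_M(E_{A\cup\{\ep\}})$ (your near-linearity), and then reads the equivalence relation straight off these rank equalities. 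Your framework is easily repaired along the same lines: replace ``$\pi(O_1)=\pi(O_2)$'' by the intrinsic condition $r_M(O_1\cup O_2)\le r_{M(G)}(O_1\cup O_2)+1$, or simply define $\bar\pi(\alpha)=\bar\pi(\beta)$ to mean $r_M(E_{\{\alpha,\beta,\ep\}})=n$; your theta-subgraph analysis then proves transitivity and the subgroup property without any appeal to points of $F$, and the rest of your argument goes through unchanged.
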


We conjecture that this result in fact holds for all finite groups, and we make some partial progress in support of this conjecture.

% examples

After applying Theorem \ref{main} to group-labeled graphs, we consider Theorem \ref{main} in its own right.
This theorem relies on the matroid $N$, and in general it is unclear how to construct a matroid $N$ of rank at least two on the set of circuits of $M$ so that $N$ satisfies $(\ref{main}*)$.
We show that, for any matroid $M$ with corank at least three, there is a rank-$3$ matroid $N$ on the circuits of $M$ that satisfies $(\ref{main}*)$, and there are many rank-$2$ matroids on the circuits of $M$ that satisfy $(\ref{main}*)$.
For representable matroids we can do much better.
Using the derived matroid of Longyear \cite{Longyear} and Oxley and Wang's recent generalization of it \cite{Oxley+Wang}, we show that if $M$ is representable, then there are many examples of a matroid $N$ on the circuits of $M$ that satisfies $(\ref{main}*)$.

\begin{theorem} \label{representable}
Let $M$ be a matroid representable over a field $\bF$.
Then, for each integer $k$ with $1 \le k \le r^*(M)$, there is a rank-$k$ matroid $N$ on the set of circuits of $M$ so that $N$ satisfies $(\ref{main}*)$.
\end{theorem}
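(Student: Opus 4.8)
The plan is to take $N$ to be a truncation of the \emph{derived matroid} $\delta_{\bF}(M)$ of Longyear and Oxley--Wang. Fix an $\bF$-representation of $M$ by a matrix $A$ whose columns are indexed by $E(M)$. For each circuit $C$ of $M$, the space $\{v \in \bF^{E(M)} : Av = 0,\ \operatorname{supp}(v) \subseteq C\}$ is $1$-dimensional (since $C$ is a minimal dependent set), so it is spanned by a vector $v_C$ with $\operatorname{supp}(v_C) = C$, unique up to a nonzero scalar. The derived matroid $N_0 := \delta_{\bF}(M)$ is the matroid on the set of circuits of $M$ that is represented by the family $\{v_C\}$; its rank equals $\dim \ker A = |E(M)| - r(M) = r^*(M)$.

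First I would verify that $N_0$ satisfies $(\ref{main}*)$. Let $\cC' = \{C_1, \dots, C_m\}$ be a perfect collection of circuits of $M$ and set $U = \bigcup_{i} C_i$. Each $v_{C_i}$ lies in the subspace $K_U := \{v \in \ker A : \operatorname{supp}(v) \subseteq U\}$, which is canonically identified with the kernel of the submatrix of $A$ on the columns $U$, and hence has dimension $|U| - r_M(U)$; by perfectness this equals $m$. I claim $v_{C_1}, \dots, v_{C_m}$ are linearly independent. Indeed, if $\sum_i \lambda_i v_{C_i} = 0$ with some $\lambda_j \neq 0$, then $v_{C_j}$ is a linear combination of the $v_{C_i}$ with $i \ne j$, whence $C_j = \operatorname{supp}(v_{C_j}) \subseteq \bigcup_{i \ne j} C_i$, contradicting the requirement in the definition of a perfect collection that no $C_i$ be contained in the union of the others. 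So $\{v_{C_1}, \dots, v_{C_m}\}$ is a basis of the $m$-dimensional space $K_U$. Now if $C'$ is any circuit of $M$ with $C' \subseteq U$, then $v_{C'} \in K_U = \operatorname{span}\{v_{C_1},\dots,v_{C_m}\}$, so $C' \in \cl_{N_0}(\cC')$, which is exactly $(\ref{main}*)$.

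To obtain a matroid of every rank $k$ with $1 \le k \le r^*(M) = r(N_0)$, I would let $N$ be the truncation of $N_0$ to rank $k$, so that $r_N(S) = \min\{r_{N_0}(S), k\}$ for all $S$. If $x \in \cl_{N_0}(S)$ then $r_{N_0}(S \cup x) = r_{N_0}(S)$, hence $r_N(S \cup x) = r_N(S)$ and $x \in \cl_N(S)$; thus $\cl_N(S) \supseteq \cl_{N_0}(S)$ for every $S$, and the property $(\ref{main}*)$ established for $N_0$ is inherited by $N$. Since $r(N) = k$, the matroid $N$ is as required (and when $k = 1$ it recovers a linear class, in keeping with the remark following Theorem \ref{brylawski}).

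The crux of the argument is the linear independence claim in the second paragraph, where both halves of the definition of a perfect collection enter in an essential and complementary way: the condition that no circuit lies in the union of the others is precisely what forces the dependency vectors to be independent, while the rank identity $|U| - r_M(U) = |\cC'|$ is precisely what forces that independent set to be a \emph{basis} of $K_U$, so that it spans $v_{C'}$. The remaining ingredients — the basic properties of the derived matroid and the behaviour of closure under truncation — are routine, so I do not anticipate further difficulty there.
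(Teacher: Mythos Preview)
Your proof is correct and follows essentially the same route as the paper: the paper proves (Proposition~6.2) that the derived matroid of any $\bF$-representation of $M$ satisfies $(\ref{main}*)$ via exactly the support/dimension argument you give, and then invokes the general fact (Proposition~4.2) that any projection of a matroid satisfying $(\ref{main}*)$ again satisfies it---your truncation step is a special case of this. The only cosmetic difference is that you verify closure-preservation for truncations by hand, while the paper appeals to the more general projection lemma.
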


% converse

It is unclear in general when there is a matroid $N$ on the circuits of a given matroid $M$ so that $N$ satisfies $(\ref{main}*)$.
Theorem \ref{brylawski} states that every elementary lift of a given matroid $M$ arises from a linear class of circuits of $M$.
Is it possible that every lift of $M$ arises from a matroid $N$ on the circuits of $M$ that satisfies $(\ref{main}*)$? We conjecture an affirmative answer to this question.

\begin{conjecture} \label{converse}
Let $M$ be a matroid. For every lift $K$ of $M$, there is a matroid $N$ on the circuits of $M$ so that $N$ satisfies $(\ref{main}*)$, and $K\cong M^N$. 
\end{conjecture}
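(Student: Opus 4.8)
The plan is to reverse-engineer a candidate matroid $N$ on the set $\cC$ of circuits of $M$ from the lift $K$, and then verify the two required properties. Fix a matroid $P$ and a set $X$ with $E(P)=E(M)\cup X$, $X\cap E(M)=\emptyset$, $P\del X=K$ and $P/X=M$; as is standard for lifts we may take $X$ independent and coindependent in $P$ with $|X|=r(K)-r(M)$. Two features of the desired $N$ are then forced. First, the rank formula of Theorem \ref{main} requires, for every $Y\subseteq E(M)$, that $r_N(\{C:C\text{ is a circuit of }M|Y\})=r_K(Y)-r_M(Y)$, and since $r_K(Y)=r_P(Y)$ and $r_M(Y)=r_{P/X}(Y)$ this equals the local connectivity $\sqcap_P(X,Y)=r_P(X)+r_P(Y)-r_P(X\cup Y)$. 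Second, a coloop $e$ of $M|Y$ is not in $\cl_P((Y\setminus e)\cup X)$, hence not in $\cl_P(Y\setminus e)$, hence is a coloop of $K|Y$; consequently $r_K(Y)-r_M(Y)$ depends only on $Y\setminus\{\text{coloops of }M|Y\}=\bigcup\{C:C\text{ is a circuit of }M|Y\}$. Writing $h(\mathcal{D}):=\sqcap_P\bigl(X,\bigcup_{C\in\mathcal{D}}C\bigr)$ for $\mathcal{D}\subseteq\cC$, the goal thus becomes: construct a matroid $N$ on $\cC$ with $r_N(\mathcal{D})=h(\mathcal{D})$ for every \emph{saturated} collection $\mathcal{D}$, that is, every $\mathcal{D}$ of the form $\{C:C\text{ is a circuit of }M|Y\}$. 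Note that $h(\mathcal{D})=h(\mathcal{D}')$ whenever $\mathcal{D}'$ is the saturated collection $\{C:C\subseteq\bigcup\mathcal{D}\}$ generated by $\mathcal{D}$, so a matroid $N$ realizing $h$ on saturated collections automatically satisfies $r_N\le h$ everywhere.

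Granting such an $N$, here is how I would finish; both conclusions reduce to the inequality $h(\mathcal{D})-h(\cE)\le|\mathcal{D}\setminus\cE|$ for suitable $\cE\subseteq\mathcal{D}$. With $Y_0=\bigcup\mathcal{D}$ and $Z=\bigcup\cE$ one computes $h(\mathcal{D})-h(\cE)=r_{K/Z}(Y_0\setminus Z)-r_{M/Z}(Y_0\setminus Z)\le|Y_0\setminus Z|-r_{M/Z}(Y_0\setminus Z)=\operatorname{nullity}_M(Y_0)-\operatorname{nullity}_M(Z)$, where $\operatorname{nullity}_M(A)=|A|-r_M(A)$. Every circuit of $(M/Z)|(Y_0\setminus Z)$ has the form $D\setminus Z$ for some circuit $D$ of $M$ with $D\subseteq Y_0$ and $D\not\subseteq Z$; taking fundamental circuits with respect to a basis of $(M/Z)|(Y_0\setminus Z)$ produces $\operatorname{nullity}_{M/Z}(Y_0\setminus Z)$ pairwise distinct such circuits $D$, each lying in $\mathcal{D}\setminus\cE$ when $\mathcal{D}$ is saturated (so $D\in\mathcal{D}$) and $Z=\bigcup\cE$ (so $D\notin\cE$). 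Hence $h(\mathcal{D})-h(\cE)\le|\mathcal{D}\setminus\cE|$ for saturated $\mathcal{D}$, so $r_N(\mathcal{D})=h(\mathcal{D})$ there, and then $r_{M^N}(Y)=r_M(Y)+h(\{C:C\text{ circuit of }M|Y\})=r_K(Y)$, because both $r_M(Y)-r_M(Y_0)$ and $r_K(Y)-r_K(Y_0)$ equal the number of coloops of $M|Y$; thus $M^N=K$. For condition $(\ref{main}*)$, let $\cC'$ be perfect with $C'\subseteq\bigcup\cC'$. Since $h(\cC'\cup\{C'\})=h(\cC')$ (adjoining $C'$ does not enlarge $\bigcup\cC'$) and $r_N\le h$, it suffices to see $r_N(\cC')=h(\cC')$, which again follows from $h(\cC')-h(\cE)\le|\cC'\setminus\cE|$: by perfectness $\operatorname{nullity}_M(\bigcup\cC')=|\cC'|$, while $\operatorname{nullity}_M(\bigcup\cE)\ge|\cE|$ for every subcollection $\cE$ (a short induction using that some element of some member of $\cE$ lies in no other member of $\cE$). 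Then $r_N(\cC'\cup\{C'\})\le h(\cC')=r_N(\cC')$, so $C'\in\cl_N(\cC')$.

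The heart of the matter is therefore the construction of $N$, and this is the hard part. The function $h$ is monotone with $h(\emptyset)=0$, but it is \emph{not} submodular in general: for instance, if $M$ is the rank-$2$ matroid consisting of three parallel pairs and $K$ is a suitable rank-$4$ lift, one can have circuits $C_1,C_2$ of $M$ with $h(\{C_1\})=h(\{C_2\})=0$ yet $h(\{C_1,C_2\})=2$. So the standard device of inducing a matroid from a submodular function does not apply, and the naive choice $r_N:=h$ violates the unit-increase axiom. A genuinely new idea is needed. When $M$, and hence $P$, is representable, a natural candidate uses the derived matroid of $P$ in the sense of Longyear \cite{Longyear} and Oxley--Wang \cite{Oxley+Wang}: each circuit $C$ of $M$ determines the unique circuit $D_C$ of $P$ contained in $C\cup X$, the map $C\mapsto D_C$ is injective, and $P$ has corank $r^*(M)\ge r(K)-r(M)$, so one would take $N$ to be the pullback along $C\mapsto D_C$ of an appropriate rank-$(r(K)-r(M))$ truncation of the restriction of the derived matroid of $P$ to $\{D_C:C\in\cC\}$; the crux would be to show this $N$ attains the values of $h$ on saturated collections. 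For non-representable $M$, or for lifts that fail to be representable even when $M$ is, one needs an abstract substitute for the derived matroid with these properties, and it is precisely the absence of a canonical matroid structure on $\cC$ --- tied to the non-submodularity of $h$ --- that I expect to be the main obstacle, and the reason the statement remains conjectural.
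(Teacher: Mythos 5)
The statement you were asked to prove is Conjecture \ref{converse}; the paper offers no proof of it, and indeed remarks that the equivalent Conjecture \ref{converse2} seems difficult even for rank-$1$ uniform matroids. So there is no argument in the paper to compare yours against, and, to your credit, you do not claim to have one: everything after ``Granting such an $N$'' is conditional on constructing a matroid $N$ on $\cC(M)$ whose rank function agrees with $h(\mathcal{D})=\sqcap_P(X,\cup\mathcal{D})$ on saturated collections, and you explicitly identify that construction as the open heart of the problem. That identification is sound, and your preliminary reductions are essentially correct: the rank formula of Theorem \ref{main} does force $r_N(\cC(M|Y))=r_K(Y)-r_M(Y)$; this quantity does depend only on $\cup\cC(M|Y)$; the inequality $h(\mathcal{D})-h(\cE)\le|\mathcal{D}\setminus\cE|$ for saturated $\mathcal{D}$ is correctly obtained from fundamental circuits in $M/Z$; and your example (two disjoint transversal triangles of the three-parallel-pairs matroid that become disjoint rank-$2$ circuits of a rank-$4$ lift) genuinely shows $h$ is not submodular, so no off-the-shelf induced-matroid device applies and the naive choice $r_N:=h$ fails the unit-increase axiom.

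Two caveats. First, even granting an $N$ that agrees with $h$ on saturated collections, your verification of $(\ref{main}*)$ has a gap: a perfect collection $\cC'$ is generally not saturated, and the facts $r_N\le h$ and $h(\cC'\cup\{C'\})=h(\cC')$ do not yield $r_N(\cC'\cup\{C'\})=r_N(\cC')$ unless you also know $r_N(\cC')=h(\cC')$; since $h(\cC')\le|\cC'|$ for perfect $\cC'$, what you really need to build into the construction is that perfect collections with $h(\cC')=|\cC'|$ are independent in $N$ (compare Claim \ref{perfects}, which shows this is necessary). Second, the paper does propose a concrete candidate for $N$ in Conjecture \ref{matroid}, defining its independent sets via the intermediate matroids $K'$ that are simultaneously projections of $K$ and lifts of $M$; your derived-matroid suggestion for representable $M$ is in the spirit of Proposition \ref{binary}, but, like the paper's candidate, it is unverified. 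In short: your reductions are useful and largely correct, but no proof of the conjecture is given here, and none exists in the paper.
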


We further conjecture how to construct $N$ given a lift $K$ of $M$ (Conjecture \ref{matroid}).

% dual setting

Finally, we consider Theorem \ref{main} in the dual setting.
If a matroid $K$ is an (elementary) lift of a matroid $M$, then $M$ is an \emph{(elementary) projection} or \emph{quotient} of $K$.
Since projections are dual to lifts, one can use Theorem \ref{brylawski} to construct an elementary projection of $M^*$ from a set of hyperplanes of $M^*$ whose complements form a linear class of circuits of $M$.
Such a collection of hyperplanes is called a \emph{linear subclass} of hyperplanes.
Equivalently, a collection $\cH$ of hyperplanes of a matroid $K$ is a linear subclass if, whenever $H_1$ and $H_2$ are hyperplanes in $\cH$ so that $r_K(H_1\cap H_2)=r(K)-2$, then every hyperplane $H$ of $K$ containing $H_1\cap H_2$ is also in $\cH$.
The following classical result of Crapo \cite{Crapo} constructs an elementary projection from a linear subclass; it is dual to Theorem \ref{brylawski}.

\begin{theorem}[Crapo '65] \label{crapo}
Let $K$ be a matroid, and let $\cH$ be a linear subclass of hyperplanes of $K$. 
Then the function $r_{K'}$ defined, for all $X \subseteq E(K)$, by 
$$r_{K'}(X)=\begin{cases}
r_K(X)-1 & \text{ if each hyperplane of $K$ containing $X$ is in $\cH$},  \\
r_K(X) & \text{ otherwise} 
\end{cases}$$
is the rank function of an elementary projection $K'$ of $K$.
Moreover, every elementary projection of $K$ can be obtained in this way.
\end{theorem}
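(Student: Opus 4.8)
The plan is to deduce Theorem~\ref{crapo} from Theorem~\ref{brylawski} by matroid duality, using that the hyperplanes of $K$ are exactly the complements of the cocircuits of $K$, and the cocircuits of $K$ are exactly the circuits of $M:=K^*$. So I would begin by transporting $\cH$ to the collection $\cC:=\{E(K)\del H\colon H\in\cH\}$ of circuits of $M$.

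The first substantive step is to verify that $\cC$ is a linear class of circuits of $M$. Writing $C_i=E(K)\del H_i$ for hyperplanes $H_1,H_2$ of $K$, we have $C_1\cup C_2=E(K)\del(H_1\cap H_2)$, and the rank-duality identity $r_{K^*}(Y)=|Y|+r_K(E(K)\del Y)-r(K)$ gives $|C_1\cup C_2|-r_M(C_1\cup C_2)=r(K)-r_K(H_1\cap H_2)$. Hence the hypothesis $r_K(H_1\cap H_2)=r(K)-2$ is equivalent to $|C_1\cup C_2|-r_M(C_1\cup C_2)=2$, and a circuit $C=E(K)\del H$ of $M$ is contained in $C_1\cup C_2$ exactly when $H\supseteq H_1\cap H_2$. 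So the linear-subclass axiom for $\cH$ is literally the linear-class axiom for $\cC$, and the same dictionary shows the converse as well: every linear class of circuits of $M$ pulls back to a linear subclass of hyperplanes of $K$.

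Next I would apply Theorem~\ref{brylawski} to obtain the elementary lift $M'$ of $M$, witnessed by a matroid $P'$ on $E(M)\cup\{e\}$ with $P'/e=M$ and $P'\del e=M'$, and then set $K':=(M')^*$ and $P:=(P')^*$. The identities $(N\del e)^*=N^*/e$ and $(N/e)^*=N^*\del e$ give $P\del e=K$ and $P/e=K'$, so $K'$ is an elementary projection of $K$. It remains to check that the rank function of $K'=(M')^*$ equals the displayed one. Starting from $r_{K'}(X)=|X|+r_{M'}(E(K)\del X)-r(M')$ and substituting the two-case formula of Theorem~\ref{brylawski} for $r_{M'}$, the computation splits on whether every circuit of $M|(E(K)\del X)$ lies in $\cC$; but the circuits of $M|(E(K)\del X)$ are exactly the cocircuits of $K$ disjoint from $X$, i.e.\ the sets $E(K)\del H$ over hyperplanes $H$ of $K$ with $H\supseteq X$, so this condition says precisely that every hyperplane of $K$ containing $X$ lies in $\cH$. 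Combined with $r(M')=r(M)+1$ and the rank-duality identity once more, the two cases collapse to $r_{K'}(X)=r_K(X)-1$ and $r_{K'}(X)=r_K(X)$, matching the statement. For the ``moreover'' clause I would reverse the dictionary: an elementary projection $K'$ of $K$ dualizes to an elementary lift $(K')^*$ of $M$, which by Theorem~\ref{brylawski} is the matroid associated to some linear class $\cC$ of circuits of $M$; taking $\cH:=\{E(K)\del C\colon C\in\cC\}$, which is a linear subclass by the equivalence above, and running the rank computation backwards recovers $K'$.

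I do not expect a genuine obstacle, since the whole argument is a careful but mechanical dualization of an already-proved theorem. The only place demanding real attention is the bookkeeping with the rank-duality formula and the consistent use of the correspondence among ``circuit of the restriction $M|(E(K)\del X)$'', ``cocircuit of $K$ avoiding $X$'', and ``hyperplane of $K$ containing $X$'', both in the linear-class/linear-subclass translation and in the rank computation; it is also worth a quick check that the degenerate cases, such as $X$ spanning $K$ (so that no hyperplane contains $X$), behave as the displayed formula predicts.
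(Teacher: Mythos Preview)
The paper does not give its own proof of Theorem~\ref{crapo}; it states the result, attributes it to Crapo, and remarks only that ``it is dual to Theorem~\ref{brylawski}.'' Your proposal makes exactly this duality precise, and the dictionary you set up --- hyperplanes of $K$ versus circuits of $K^*$, the identity $|C_1\cup C_2|-r_{K^*}(C_1\cup C_2)=r(K)-r_K(H_1\cap H_2)$, and the translation of ``circuit of $M|(E(K)\del X)$'' into ``hyperplane of $K$ containing $X$'' --- is correct and is the intended argument.

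One small point worth tidying: you write ``combined with $r(M')=r(M)+1$,'' but Brylawski's formula gives $r(M')=r(M)$ when $\cC$ contains every circuit of $M$ (equivalently, $\cH$ contains every hyperplane of $K$). In that degenerate case the dualization yields $K'=K$, whereas the displayed formula in the theorem would read $r_{K'}(\varnothing)=-1$; this is a wrinkle in the theorem as stated rather than in your method, but since you flagged degenerate cases as needing a check, you should note that the nondegenerate hypothesis (some hyperplane lies outside $\cH$) is where $r(M')=r(M)+1$ actually holds and the two formulas agree.
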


In the same way, we can use Theorem \ref{main} to construct a projection of a matroid $K$ using a given matroid on the set of hyperplanes of $K$.

% dual main

\begin{theorem} \label{dual main}
Let $K$ be a matroid, and let $N$ be a matroid on the set of hyperplanes of $K$ so that
\begin{enumerate}[$(\ref{dual main}*)$]
\item if $\cH'$ is a set of hyperplanes of $K$ for which no hyperplane  in $\cH'$ contains the intersection of the others, and $r_K(\cap_{H\in\cH'} H)=r(K)-|\cH'|$, then each hyperplane $H'$ of $K$ that contains $\cap_{H\in\cH'}H$ satisfies $H'\in \cl_N(\cH').$
\end{enumerate}
Then the function $r_{K_N}$ defined, for all $X \subseteq E(K)$, by
\begin{align*}
r_{K_N}(X) = r_M(X) & -r(N) \\
&+ r_N(\{H \colon \textrm{$H$ is a hyperplane of $K$ that contains $X$}\})
\end{align*}
is the rank function of a rank-$r(N)$ projection $K_N$ of $K$.
\end{theorem}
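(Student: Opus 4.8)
The plan is to deduce Theorem~\ref{dual main} from Theorem~\ref{main} by duality. Set $M := K^*$ and write $E := E(K)$. Recall that a set $H \subseteq E$ is a hyperplane of $K$ if and only if $E \setminus H$ is a cocircuit of $K$, that is, a circuit of $M$; hence $H \mapsto E \setminus H$ is a bijection from the hyperplanes of $K$ to the circuits of $M$, and via this bijection we regard $N$ as a matroid on the set of circuits of $M$.

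The first step is to check that, under this identification, condition $(\ref{dual main}*)$ for $(K,N)$ is precisely condition $(\ref{main}*)$ for $(M,N)$. Given a set $\cH'$ of hyperplanes of $K$, put $\cC' := \{E \setminus H \colon H \in \cH'\}$, the corresponding set of circuits of $M$. Then $\bigcup_{C \in \cC'} C = E \setminus \bigcap_{H \in \cH'} H$, so a circuit $E \setminus H'$ of $M$ is contained in $\bigcup_{C \in \cC'} C$ if and only if $\bigcap_{H \in \cH'} H \subseteq H'$, and no circuit of $\cC'$ is contained in the union of the others if and only if no hyperplane of $\cH'$ contains the intersection of the others. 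Using the dual rank identity $r_{K^*}(S) = |S| + r_K(E \setminus S) - r(K)$ with $S = \bigcup_{C \in \cC'} C = E \setminus \bigcap_{H\in\cH'}H$, a one-line computation gives $\bigl|\bigcup_{C \in \cC'} C\bigr| - r_M\bigl(\bigcup_{C \in \cC'} C\bigr) = r(K) - r_K\bigl(\bigcap_{H \in \cH'} H\bigr)$; thus $\cC'$ is a perfect collection of circuits of $M$ exactly when $\cH'$ satisfies $r_K(\bigcap_{H \in \cH'} H) = r(K) - |\cH'|$ and no hyperplane of $\cH'$ contains the intersection of the others. Since $E \setminus H' \in \cl_N(\cC')$ if and only if $H' \in \cl_N(\cH')$, the two conditions correspond.

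Now apply Theorem~\ref{main} to $M = K^*$ with the matroid $N$: since $(\ref{main}*)$ holds, we obtain a rank-$r(N)$ lift $M^N$ of $K^*$, with $r(M^N) = r(K^*) + r(N) = |E| - r(K) + r(N)$ and $r_{M^N}(X) = r_{K^*}(X) + r_N(\{C \colon C \text{ is a circuit of } K^*|X\})$. Set $K_N := (M^N)^*$. As $M^N$ is a lift of $K^*$, there is a matroid $P$ and a set $Z$ with $P \del Z = M^N$ and $P / Z = K^*$; dualizing, $P^* / Z = (M^N)^* = K_N$ and $P^* \del Z = K$, so $K_N$ is a projection (quotient) of $K$, of rank $r(K_N) = |E| - r(M^N) = r(K) - r(N)$, i.e.\ a rank-$r(N)$ projection.

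It remains to identify the rank function of $K_N$. For $X \subseteq E$ we have $r_{K_N}(X) = |X| - r(M^N) + r_{M^N}(E \setminus X)$. Substituting $r_{M^N}(E \setminus X) = r_{K^*}(E \setminus X) + r_N(\{C \colon C \text{ is a circuit of } K^*|(E \setminus X)\})$ and $r_{K^*}(E \setminus X) = |E \setminus X| + r_K(X) - r(K)$, the numerical terms telescope to $r_K(X) - r(N)$. Finally, a circuit of $K^*|(E \setminus X)$ is a cocircuit $E \setminus H$ of $K$ with $E \setminus H \subseteq E \setminus X$, equivalently a hyperplane $H$ of $K$ with $X \subseteq H$; so under our identification $r_N(\{C \colon C \text{ is a circuit of } K^*|(E \setminus X)\}) = r_N(\{H \colon H \text{ is a hyperplane of } K \text{ containing } X\})$, and adding this to $r_K(X) - r(N)$ yields the stated formula (where the $r_M(X)$ in the statement is $r_K(X)$). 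No step presents a genuine obstacle; the only care required is in matching conventions — the complementation bijection between hyperplanes of $K$ and circuits of $K^*$, the dual rank formula, and verifying that the ``perfect'' condition translates into the intersection-rank condition of $(\ref{dual main}*)$.
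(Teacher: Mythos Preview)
Your proof is correct and follows essentially the same approach as the paper's: dualize to $M=K^*$, transport $N$ to the circuits of $M$ via the complementation bijection $H\mapsto E\setminus H$, verify that condition $(\ref{dual main}*)$ becomes condition $(\ref{main}*)$, apply Theorem~\ref{main}, and then dualize back and compute the rank function. The paper in fact omits the proof as ``a straightforward application of duality,'' and your write-up fills in exactly those details (including the observation that the $r_M(X)$ in the displayed formula is a typo for $r_K(X)$).
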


Condition $(\ref{dual main}*)$ implies that the set $\cH$ of hyperplanes of $K$ that are loops of $N$ is a linear subclass of hyperplanes, so when $r(N) = 1$, the matroid $K_N$ is equal to the matroid $K'$ obtained by applying Theorem \ref{crapo} to the linear subclass $\cH$.
By duality, Conjecture \ref{converse} is equivalent to the conjecture that every projection of $K$ arises from a matroid $N$ on the hyperplanes of $K$ that satisfies $(\ref{dual main}*)$. 
\end{section}

%%% background

\begin{section}{Preliminaries}
Unless stated otherwise, we follow the notation and terminology of Oxley \cite{Oxley}.
Zaslavsky \cite{Z2} showed that elementary lifts of graphic matroids can be encoded using biased graphs.
A \emph{theta graph} consists of two distinct vertices $x$ and $y$, and three pairwise internally disjoint paths from $x$ to $y$.
A set $\cB$ of cycles of a graph $G$ satisfies the \emph{theta property} if no theta subgraph of $G$ contains exactly two cycles in $\cB$; equivalently, $\cB$ is a linear class of circuits of the graphic matroid $M(G)$.
A \emph{biased graph} is a pair $(G,\cB)$ where $\cB$ is a collection of cycles of $G$ which satisfies the theta property. 
The cycles in $\cB$ are \emph{balanced}, and the cycles not in $\cB$ are \emph{unbalanced}.
%The set $\cB$ is always a linear class of circuits of $M(G)$, and every linear class of circuits of $M(G)$ satisfies the theta property.

Biased graphs were first described by Zaslavsky in \cite{Z1}, and in \cite{Z2} he defined the following matroid associated with a given biased graph.
The \emph{lift matroid} of a biased graph $(G,\cB)$ is the matroid with ground set $E(G)$ so that $I\subseteq E(G)$ is independent if and only if the subgraph of $G$ induced by $I$ contains at most one cycle, and no balanced cycle.
Equivalently, the lift matroid of $(G, \cB)$ is the matroid obtained from applying Theorem \ref{brylawski} to $M(G)$ with the linear class $\cB$.
Note that if $\cB$ is the set of all cycles of $G$, then the lift matroid of $(G,\cB)$ is isomorphic to $M(G)$.

A natural family of biased graphs arises from graphs whose edges are labeled by elements of a group.
We define group-labeled graphs using the notation of \cite{DeVos}.
A \emph{group-labeling} of a graph $G$ consists of an orientation of the edges of $G$, and a function $\phi\colon E(G)\to \Gamma$ for some (multiplicative) group $\Gamma$.
For each walk $W$ in $G$ with edge sequence $e_1,e_2,\dots,e_k$, define $c_i(W)$ by
$$c_i(W)=\begin{cases}
1 & \text{ if $e_i$ is traversed forward in $W$},  \\
-1 & \text{ if $e_i$ is traversed backward in $W$}, 
\end{cases}$$
and define $\phi(W)=\prod_{i=1}^{k}\phi(e_i)^{c_i(W)}$.
Let $\cB_{\phi}$ be the set of cycles $C$ of $G$ for which some (and thus every) simple closed walk $W$ around $C$ satisfies $\phi(W) = 1$.
Then $\cB_{\phi}$ satisfies the theta property, so $(G,\cB_{\phi})$ is a biased graph \cite{Z1}.
The cycles in $\cB_{\phi}$ are the balanced cycles with respect to the group-labeling.
We will also be interested in the group values of simple closed walks around unbalanced cycles of $(G, \cB_{\phi})$.
For each cycle $C$ of $G$, we define
 $$\phi(C)=\{\phi(W)\colon W \textrm{ is a simple closed walk around } C\}.$$
Note that $C$ is balanced if and only if $1 \in \phi(C)$.

For each finite group $\Gamma$ and each integer $n\ge 3$, we write $K_n^{\Gamma}$ for the graph with vertex set $[n]$ and edge set ${[n]\choose 2}\times \Gamma$.
We write $\cB_n^{\Gamma}$ for the set of balanced cycles obtained from the $\Gamma$-labeling $\phi((\{i,j\}, \alpha)) = \alpha$, and the following edge-orientation: for all $1\le i<j\le n$, each edge between vertices $i$ and $j$ is oriented from vertex $i$ to vertex $j$.
We say that a cycle of $K_n^{\Gamma}$ is balanced if it is in $\cB_n^{\Gamma}$; the natural $\Gamma$-labeling and edge-orientation will be implicit throughout the remainder of the paper.
We denote the lift matroid of $(K_n^{\Gamma},\cB_n^{\Gamma})$ by $\LG(n,\Gamma)$.
\end{section}

% the general lift construction

\begin{section}{The Construction}
In this section we prove Theorem \ref{main}.
For a set $E$ and a set $\cX$ of subsets of $E$, we write $\cup\cX$ for $\cup_{X\in\cX}X$.
Given a matroid $M$, we write $\cC(M)$ for the collection of circuits of $M$.
Recall that a collection $\cC'$ of circuits of a matroid $M$ is \emph{perfect} if $|\cup\cC'|-r_M(\cup\cC')=|\cC'|$, and no circuit in $\cC'$ is contained in the union of the others.
The following lemma shows that the set of fundamental circuits of a matroid with respect to some basis is always a perfect collection of circuits.

\begin{lemma}\label{find perfect}
Let $M$ be a matroid.
Let $B$ be a basis of $M$, and, for each $e\in E(M) - B$, let $C_e$ be the unique circuit of $M|(B\cup\{e\})$.
Then $\cC'=\{C_e\colon e\in E(M) - B\}$ is a perfect collection of $|E(M)|-r(M)$ circuits of $M$.
Conversely, every perfect collection of $|E(M)|-r(M)$ circuits of $M$ arises in this way.
\end{lemma}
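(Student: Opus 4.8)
The plan is to handle the two directions separately: fundamental circuits for the forward implication, and a transversal argument for the converse. For the forward direction, I would first record that the circuits $C_e$ are pairwise distinct, since $C_e\subseteq B\cup\{e\}$ gives $C_e\cap(E(M)-B)=\{e\}$; this simultaneously yields $|\cC'|=|E(M)-B|=|E(M)|-r(M)$ and the second defining property of a perfect collection, because $e\in C_e$ but $e\notin C_{e'}$ for $e'\ne e$, so $C_e\not\subseteq\cup(\cC'-\{C_e\})$. For the rank condition, set $F=\cup\cC'$ and note $E(M)-B\subseteq F$, so $F\setminus B=E(M)-B$. Since $C_e-e\subseteq B\cap F$ and $e\in\cl_M(C_e-e)$ for each $e$, we get $F\subseteq\cl_M(B\cap F)$; as $B\cap F$ is independent this gives $r_M(F)=|B\cap F|$, whence $|F|-r_M(F)=|F\setminus B|=|E(M)-B|=|\cC'|$, as needed.

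For the converse, given a perfect collection $\cC'$ with $|\cC'|=|E(M)|-r(M)$, the idea is to take as basis the complement of a transversal of $\cC'$. Concretely, since no circuit of $\cC'$ lies in the union of the others, I would choose for each $C\in\cC'$ an element $x_C\in C\setminus\cup(\cC'-\{C\})$; distinct circuits give distinct elements (if $C\ne C'$ then $x_C\in C$ but $x_C\notin C'$), so $X:=\{x_C\colon C\in\cC'\}$ has exactly $|\cC'|=|E(M)|-r(M)$ elements. I would then verify that $B:=E(M)-X$ is a basis: it has the right size, $|B|=r(M)$; and it is spanning because for each $C$ the set $C-x_C$ contains no element of $X$ (it omits $x_C$, and $x_{C'}\notin C$ for $C'\ne C$), hence $C-x_C\subseteq B$ and so $x_C\in\cl_M(C-x_C)\subseteq\cl_M(B)$, giving $X\subseteq\cl_M(B)$ and thus $\cl_M(B)=E(M)$.

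Finally I would check that $\cC'$ is precisely the set of fundamental circuits of $B$. For $C\in\cC'$ with $e=x_C\in E(M)-B$, we have $C\subseteq B\cup\{e\}$, and since $B$ is independent the matroid $M|(B\cup\{e\})$ has a unique circuit, which must therefore be $C$; so $C_e=C$. As $C\mapsto x_C$ is a bijection from $\cC'$ onto $E(M)-B$, this shows $\{C_e\colon e\in E(M)-B\}=\cC'$, completing the argument. I expect the only step requiring genuine thought to be the converse — namely, realizing the basis should be the complement of a transversal of $\cC'$, and then checking at once that it both spans $M$ and has size $r(M)$; the rest is bookkeeping with closures and the definition of "perfect." I do not anticipate a real obstacle, and in particular the argument never needs $\cup\cC'=E(M)$, since the computation $X\subseteq\cl_M(B)$ handles the general case directly.
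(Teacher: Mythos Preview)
Your proof is correct and follows essentially the same approach as the paper: the forward direction uses that each $C_e$ contains the distinguished element $e$, and the converse constructs the basis as the complement of a transversal of the private elements of each circuit in $\cC'$. Your forward direction is in fact slightly more careful than the paper's, which asserts $\cup\cC'=E$ (false when $M$ has coloops), whereas your computation $|F|-r_M(F)=|F\setminus B|=|E(M)-B|$ works regardless.
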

\begin{proof}
Let $E = E(M)$.
Clearly no circuit of $\cC'$ is contained in the union of the others, and $\cup \cC' = E$.
Then $|\cup \cC'| - r_M(\cup \cC') = |E| - r(M) = |\cC'|$, so $\cC'$ is perfect.
Conversely, let $\cC'$ be a perfect collection of $|E(M)|-r(M)$ circuits of $M$.
For each $C \in \cC'$, let $X_C$ be the set of elements in $C$ that are not in any other set in $\cC'$.
Let $T$ be a transversal of $\{X_C \colon C \in \cC'\}$.
Then $|T| = |E| - r(M)$, and $T \subseteq \cl_M((\cup \cC') - T)$.
This implies that $E - T$ is a spanning subset of $M$ of size $r(M)$, and is thus a basis of $M$.
Also, each set in $\cC'$ has all but one element in $E - T$, so $\cC'$ is the set of fundamental circuits of $M$ with respect to the basis $E - T$.
\end{proof}

The following theorem easily implies Theorem \ref{main}.

\begin{theorem} \label{construction}
Let $M$ be a matroid, and let $N$ be a matroid on the set of circuits of $M$ so that 
\begin{enumerate}[$(\ref{construction}*)$]
\item if $\cC'$ is a perfect collection of circuits of $M$, then each circuit $C'$ of $M$ contained in $\cup \cC'$ satisfies $C'\in\cl_N(\cC')$.
\end{enumerate}
Let $\cI$ denote the collection of sets $X\subseteq E(M)$ for which there exists a collection $\cC'$ of $|X|-r_M(X)$ circuits of $M|X$ so that $\cC'$ is independent in $N$.
Then $\cI$ is the collection of independent sets of a matroid $M^N$ which is a rank-$r(N)$ lift of $M$.
Moreover, the function $r_{M^N}$ defined, for all $X \subseteq E(M)$, by 
$$r_{M^N}(X)=r_M(X)+r_N(\{C \colon \textrm{$C$ is a circuit of $M|X$}\})$$ 
is the rank function of $M^N$.
\end{theorem}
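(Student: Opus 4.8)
The plan is to treat the displayed formula as a candidate rank function, verify that it satisfies the rank axioms, and only then read off the matroid $M^N$, its independent sets, and the lift property. Throughout, write $g(X)=r_N(\cC(M|X))$, where $\cC(M|X)$ denotes the set of circuits of $M$ contained in $X$, so that the candidate is $r_{M^N}(X)=r_M(X)+g(X)$. The main tool — and the only place where hypothesis $(\ref{construction}*)$ is used — is the estimate, valid for every $X\subseteq E(M)$,
\[
g(X)\le |X|-r_M(X).
\]
To prove it, fix a basis $T$ of $M|X$ and let $\cC_T=\{C(e,T):e\in X\setminus T\}$ be the family of fundamental circuits, which by Lemma \ref{find perfect} applied to $M|X$ is a perfect collection of circuits of $M$ of size $|X|-r_M(X)$. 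The crucial point is that every circuit $C$ of $M|X$ is contained in $\cup\cC_T$: an element of $C$ lying in $X\setminus T$ is trivially in $\cup\cC_T$, while if some $t\in C\cap T$ lay in no fundamental circuit $C(e,T)$, then $X\setminus\{t\}\subseteq\cl_M(T\setminus\{t\})$, so $t$ would be a coloop of $M|X$ and hence in no circuit — a contradiction. Now $(\ref{construction}*)$ applies to the perfect collection $\cC_T$ and yields $C\in\cl_N(\cC_T)$ for every circuit $C$ of $M|X$; therefore $g(X)=r_N(\cC(M|X))\le r_N(\cC_T)\le|\cC_T|=|X|-r_M(X)$.

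Next I would check the local characterization of matroid rank functions (see \cite{Oxley}): $r_{M^N}(\emptyset)=0$; $r_{M^N}(X)\le r_{M^N}(X\cup\{y\})\le r_{M^N}(X)+1$; and if $r_{M^N}(X\cup\{y\})=r_{M^N}(X\cup\{z\})=r_{M^N}(X)$, then $r_{M^N}(X\cup\{y,z\})=r_{M^N}(X)$. The first is immediate. For the others I would split on whether the new element lies in $\cl_M(X)$. If $y\notin\cl_M(X)$, then no circuit of $M$ through $y$ is contained in $X\cup\{y\}$, so $g(X\cup\{y\})=g(X)$ while $r_M$ rises by one. If $y\in\cl_M(X)$, then $r_M$ is unchanged and a basis $T$ of $M|X$ remains a basis of $M|(X\cup\{y\})$; the fundamental circuits of $M|(X\cup\{y\})$ with respect to $T$ are those of $M|X$ together with $C(y,T)$, and the containment-plus-$(\ref{construction}*)$ argument from the previous paragraph gives $\cC(M|(X\cup\{y\}))\subseteq\cl_N(\cC(M|X)\cup\{C(y,T)\})$, so $g$ rises by at most one. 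This yields the first two axioms. For the third, the two hypotheses force $y,z\in\cl_M(X)$ together with $g(X\cup\{y\})=g(X)=g(X\cup\{z\})$; the latter equalities give $C(y,T),C(z,T)\in\cl_N(\cC(M|X))$, and applying $(\ref{construction}*)$ to the perfect collection of fundamental circuits of $M|(X\cup\{y,z\})$ with respect to $T$ then gives $\cC(M|(X\cup\{y,z\}))\subseteq\cl_N(\cC(M|X))$, whence $g$ is unchanged. So $r_{M^N}$ is a matroid rank function; let $M^N$ denote the associated matroid.

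Finally, $r(M^N)=r_M(E(M))+r_N(\cC(M))=r(M)+r(N)$; and since $g$ is monotone, $r_{M^N}(Y)-r_{M^N}(X)\ge r_M(Y)-r_M(X)$ for all $X\subseteq Y$, i.e.\ every flat of $M$ is a flat of $M^N$, so $M$ is a quotient of $M^N$. By the classical fact that every quotient of a matroid $K$ has the form $P/Z$ for some matroid $P$ with $P\setminus Z=K$, it follows that $M^N$ is a lift of $M$, and hence a rank-$r(N)$ lift. As for independent sets: $X$ is independent in $M^N$ iff $r_{M^N}(X)=|X|$ iff $g(X)=|X|-r_M(X)$, which by the estimate above holds iff $r_N(\cC(M|X))\ge|X|-r_M(X)$, i.e.\ iff there is a collection of $|X|-r_M(X)$ circuits of $M|X$ that is independent in $N$ — exactly the condition defining $\cI$. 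I expect the displayed estimate to be the only real obstacle: it is where hypothesis $(\ref{construction}*)$ enters, and the slightly delicate point within it is recognizing that an element of $T$ missed by all fundamental circuits must be a coloop. The remaining verifications are routine once that lemma and the standard theory of matroid quotients are in hand.
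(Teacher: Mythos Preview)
Your proof is correct. Both your argument and the paper's hinge on the same key observation: for any basis $T$ of $M|X$, the fundamental circuits $\cC_T$ form a perfect collection of size $|X|-r_M(X)$, and by $(\ref{construction}*)$ every circuit of $M|X$ lies in $\cl_N(\cC_T)$. The paper packages this as a claim (\ref{perfects}) and uses it to verify the independent-set axioms for $\cI$ directly --- in particular, proving augmentation by tracking fundamental circuits of $M|(I_1\cup I_2)$ --- and only afterwards computes the rank function and checks the lift property by showing that each circuit of $M^N$ is a union of circuits of $M$. You instead take the displayed formula as primary, verify the local rank axioms, and recover both $\cI$ and the lift property from the rank function; the lift step comes from the monotonicity of $g$, which is the rank-difference characterization of quotients rather than the circuit characterization. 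Your route avoids the augmentation argument entirely, which is the most delicate part of the paper's proof, at the cost of invoking the local rank axioms and the standard equivalence between ``$r_{M^N}-r_M$ monotone'' and ``$M$ is a quotient of $M^N$.'' Either way the substance is the same lemma about fundamental circuits spanning $\cC(M|X)$ in $N$.
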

\begin{proof}
We first use $(\ref{construction}*)$ to relate subsets of $E(M)$ with perfect collections of circuits.

\begin{claim} \label{perfects}
Let $X\subseteq E(M)$, and let $\cC'$ be a perfect collection of $|X|-r_M(X)$ circuits of $M|X$.
Then $\cC(M|X)\subseteq \cl_N(\cC')$.
Moreover, if $X\in \cI$, then $\cC'$ is independent in $N$.
\end{claim}
\begin{proof}
We have $|\cup\cC'|-r_M(\cup\cC')=|\cC'|=|X|-r_M(X)$, where the first equality holds because $\cC'$ is perfect.
If there is a circuit $C$ of $M|X$ that is not contained in $\cup \cC'$, then $\cC' \cup \{C\}$ is a collection of greater than $|X| - r_M(X)$ circuits of $M|X$ such that none is contained in the union of the others, a contradiction.
Thus, each circuit of $M|X$ is contained in $\cup\cC'$.
Then $(\ref{construction}*)$ implies that $\cC(M|X)\subseteq \cl_N(\cC')$.
In particular, this implies that $r_N(\cC(M|X)) \le r_N(\cC')$.
Now suppose that $X \in \cI$.
By the definition of $\cI$, we know that $r_N(\cC(M|X))\ge |X|-r_M(X)$.
Then $r_N(\cC') \ge |X|-r_M(X)$, and since $|X| - r_M(X) = |\cC'|$, this implies that $r_N(\cC') \ge |\cC'|$, so $\cC'$ is independent in $N$.
\end{proof}

We now show that $\cI$ is the collection of independent sets of a matroid on $E(M)$.
Clearly $\varnothing\in \cI$, since $\varnothing$ is independent in $N$. 
%We first show that $\cI$ is closed under taking subsets.
Let $Y$ be a nonempty set in $\cI$, let $e\in Y$, and let $X=Y-\{e\}$.
We will show that $X\in\cI$.
If $e\notin\cl_M(X)$, then $|X|-r_M(X)=|Y|-r_M(Y)$, and $M|X$ and $M|Y$ have the same circuits.
Since $Y\in\cI$, this implies that $X\in\cI$, so we may assume that $e\in \cl_M(X)$.
Let $B$ be a basis of $M|X$, and, for each $e'\in Y-B$, let $C_{e'}$ denote the unique circuit of $M|(B\cup\{e'\})$.
Let $\cC'=\{C_{e'}\colon e'\in Y-B\}$.
Then, by Lemma \ref{find perfect} applied to $M|Y$, $\cC'$ is a perfect collection of $|Y|-r_M(Y)$ circuits of $M|Y$, and, by \ref{perfects}, $\cC'$ is independent in $N$.
Then $\cC'-\{C_e\}$ is a collection of $|X|-r_M(X)$ circuits of $M|X$ which is independent in $N$, so $X\in \cI$.

% third axiom

We now show that $\cI$ satisfies the augmentation property.

\begin{claim} \label{axiom2}
Let $I_1$ and $I_2$ be sets in $\cI$ so that $|I_1|<|I_2|$. Then there is some $e\in I_2-I_1$ so that $I_1\cup\{e\}\in\cI$.
\end{claim}
\begin{proof}
Suppose that there is no element $e\in I_2-I_1$ so that $I_1\cup\{e\}\in\cI$.
We first show that $I_2\subseteq \cl_M(I_1)$.
If not, then let $e\in I_2-\cl_M(I_1)$.
Since $|I_1\cup\{e\}|-r_M(I_1\cup\{e\})=|I_1|-r_M(I_1)$, and $\cC(M|(I_1\cup\{e\}))=\cC(M|I_1)$, we have $I_1\cup\{e\}\in\cI$, a contradiction.
Thus, $I_2\subseteq \cl_M(I_1)$, and so $|I_2|-r_M(I_2)> |I_1|-r_M(I_1)$.

Let $B$ be a basis of $M|I_1$. 
For each $e\in (I_1\cup I_2)-B$, let $C_e$ denote the unique circuit of $M|(B\cup\{e\})$.
Let $\cC_1=\{C_e\colon e\in I_1-B\}$, and let $\cC_2=\{C_e\colon e\in I_2-I_1\}$.
Then, by Lemma \ref{find perfect} applied to $M|I_1$ and $M|(I_1\cup I_2)$, $\cC_1$ is a perfect collection of $|I_1|-r_M(I_1)$ circuits of $M|I_1$, and $\cC_1\cup \cC_2$ is a perfect collection of $|I_1\cup I_2|-r_M(I_1\cup I_2)$ circuits of $M|(I_1\cup I_2)$.
Then \ref{perfects} with $(X,\cC')=(I_1,\cC_1)$ implies that $\cC(M|I_1)\subseteq \cl_N(\cC_1)$, and that $\cC_1$ is independent in $N$.
Also, \ref{perfects} with $(X,\cC')=(I_1\cup I_2,\cC_1\cup\cC_2)$ implies that $\cC(M|I_2)\subseteq \cl_N(\cC_1\cup\cC_2)$.

Since $\cC_1$ is independent in $N$, we have $r_N(\cC_1\cup\cC_2)=r_N(\cC_1)$, or else there is some $e\in I_2-I_1$ so that $I_1\cup\{e\}\in\cI$, by the definitions of $\cI$ and $\cC_2$.
But then $$r_N(\cC(M|I_2))\le r_N(\cC_1\cup\cC_2) \le |\cC_1|= |I_1|-r_M(I_1)<|I_2|-r_M(I_2),$$
which contradicts that $I_2\in\cI$.
\end{proof}

We now know that $\cI$ is the collection of independent sets of a matroid $M^N$ on $E(M)$.
Next we compute the rank function of $M^N$.
Let $X\subseteq E(M)$, and let $B$ be a basis of $M|X$.
For each $e\in X-B$, let $C_e$ denote the unique circuit of $M|(B\cup\{e\})$.
Let $\cC'=\{C_e\colon e\in X-B\}$.
Then by Lemma \ref{find perfect} applied to $M|X$, $\cC'$ is a perfect collection of $|X|-r_M(X)$ circuits of $M|X$, so \ref{perfects} implies that $r_N(\cC')=r_N(\cC(M|X))$.
Let $\cC''$ be a subset of $\cC'$ so that $|\cC''|=r_N(\cC')=r_N(\cC(M|X))$.
Then $|B\cup (\cup\cC'')|-r_M(B\cup (\cup\cC''))=|\cC''|$, by the definition of $\cC'$.
Since $\cC''$ is independent in $N$, this implies that $B\cup (\cup\cC'')$ is independent in $M^N$, by the definition of $\cI$.
Therefore, $$r_{M^N}(X)\ge |B\cup (\cup\cC'')|=|B|+|\cC''|=r_M(X)+r_N(\cC(M|X)).$$
We now show that the reverse inequality holds.
Let $I$ be a basis of $M^N|X$, and let $\cC'$ be a collection of $|I|-r_M(I)$ circuits of $M|I$ so that $\cC'$ is independent in $N$.
Then $|I|-r_M(I)\le r_N(\cC(M|I))$, and so 
$$r_{M^N}(X)=|I|\le r_M(I)+r_N(\cC(M|I))\le r_M(X)+r_N(\cC(M|X)),$$
since $I\subseteq X$.
From the rank function of $M^N$, it is easy to see that $r(M^N)= r(M)+r(N)$.

% lift

Finally, we show that each circuit of $M^N$ is a union of circuits of $M$; this implies that $M^N$ is a lift of $M$ (see \cite[Prop. 7.3.6]{Oxley}).
Let $Y$ be a circuit of $M^N$.
If $Y$ has an element $e$ not in any circuit of $M|Y$, then from the rank function of $M^N$ it follows that $r_{M^N}(Y-\{e\})<r_{M^N}(Y)$, since $\cC(M|(Y-\{e\}))=\cC(M|Y)$.
But then $Y$ is not a circuit of $M^N$.
Thus, each element of $Y$ is in a circuit of $M|Y$, so $Y$ is a union of circuits of $M$.
\end{proof}

\end{section}

% the construction

\begin{section}{The Construction for $\bZ_p^j$-Labeled Graphs}
In this section we prove Theorem \ref{groups construction}.
Let $j \ge 2$ be an integer, and let $p$ be a prime.
Recall that each cycle $C$ of $K_n^{\bZ_p^j}$ has a set $\phi(C)\subseteq \bZ_p^j$ of values of simple closed walks around $C$, and that $C$ is balanced if and only if $\phi(C)$ contains the identity element of $\bZ_p^j$.
It is not hard to see that the set $\phi(C)$ is closed under inverses, since taking the reverse of a simple closed walk results in the inverse group element in $\bZ_p^j$. 
Also, since $\bZ_p^j$ is abelian, any two simple closed walks around $C$ with the same cyclic ordering have the same value in $\bZ_p^j$, so $|\phi(C)|\le 2$.

For each proper divisor $i$ of $j$, the group $(\bZ_p^i)^{j/i}$ is isomorphic to $\bZ_p^j$, and is the additive group of the vector space $\GF(p^i)^{j/i}$.
Thus, there is a natural map $f_i$ from $\bZ_p^j$ to the vector space $\GF(p^i)^{j/i}$.
This can be extended to a natural map $f_i'$ from $\bZ_p^j$ to the ground set of the projective geometry $\PG((j/i) - 1, p^i)$. 
%We use $f_i'$ to associated each unbalanced cycle of $K_n^{\bZ_p^j}$ with an element of $E(\PG((j/i) - 1, p^i))$.

For each unbalanced cycle $C$ of $K_n^{\bZ_p^j}$, the set $\phi(C)$ has size two and is closed under inverses; thus, both elements of $\bZ_p^j$ in $\phi(C)$ map to the same element of $\PG((j/i) - 1, p^i)$ under $f_i'$.
Therefore, the map $g_i$ from the set of unbalanced cycles of $K_n^{\bZ_p^j}$ to $E(\PG((j/i) - 1, p^i))$ defined by $g_i(C) = f'_i(\alpha)$ for some $\alpha \in \phi(C)$ is well-defined.
%using the facts that $\bZ_p^j\cong (\bZ_p^i)^{j/i}$, and that $\GF(p^i)^+\cong \bZ_p^i$.
%This allows us to define a map $g_i$ from the set of unbalanced cycles of $K_n^{\bZ_p^j}$ to the vectors in $\GF(p^i)^{j/i}$, and thus to  elements of $\PG((j/i) - 1, p^i)$.
Since each projection (or lift) of $\PG((j/i) - 1, p^i)$ has the same ground set as $\PG((j/i) - 1, p^i)$, we can use $g_i$ to define a matroid on the set of cycles of $K_n^{\bZ_p^j}$ using any projection of $\PG((j/i) - 1, p^i)$.
The following definition makes this idea precise.

\begin{definition} \label{cycle map}
Let $n \ge 3$ and $j\ge 2$ be integers, and let $p$ be a prime. Let $i$ be a positive divisor of $j$, and let $K$ be a projection of $\PG((j/i) - 1, p^i)$.
We define $N=N(n,j,p,K)$ to be the matroid on the set of cycles of $K_n^{\bZ_p^j}$ for which each set $\cC$ of unbalanced cycles satisfies
$$r_N(\cC)=r_K(\{g_i(C)\colon C\in \cC\}),$$
and each balanced cycle is a loop of $N$.
\end{definition}

If $K = \PG((j/i) - 1, p^i)$, then $r_N(\cC)$ is simply the rank in the vector space $\GF(p^i)^{j/i}$ of the set of vectors associated with the cycles in $\cC$ by the map $f_i$.
Also, if $K$ is a projection of $\PG((j/i) - 1, p^i)$, then the map $g_i$ shows that $N(n,j,p,K)$ is a projection of $N(n,j,p,\PG((j/i) - 1, p^i))$.

In order to apply Theorem \ref{main} with $M=M(K_n^{\bZ_p^j})$ and $N=N(n,j,p,K)$, we must show that $N$ satisfies $(\ref{main}*)$.
The following lemma shows that we need only consider the case in which $K$ is actually a projective geometry.

\begin{proposition} \label{project N}
Let $M$ be a matroid. If $N$ is a matroid on $\cC(M)$ that satisfies $(\ref{main}*)$, then every projection of $N$ also satisfies $(\ref{main}*)$.
\end{proposition}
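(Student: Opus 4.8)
The plan is to reduce the whole statement to one elementary fact: passing to a projection can only enlarge the closure operator. So the first thing I would record is that if $N'$ is a projection of $N$, witnessed by a matroid $P$ and a subset $X\subseteq E(P)$ with $P\del X=N$ and $P/X=N'$, then $N$ and $N'$ share the ground set $E(P)-X=\cC(M)$, and for every $A\subseteq\cC(M)$ the standard closure formulas for deletion and contraction give $\cl_N(A)=\cl_P(A)-X$ and $\cl_{N'}(A)=\cl_P(A\cup X)-X$. Since $\cl_P(A)\subseteq\cl_P(A\cup X)$ by monotonicity of closure in $P$, this yields
$$\cl_N(A)\subseteq\cl_{N'}(A)\qquad\text{for all }A\subseteq\cC(M).$$
Equivalently, every flat of $N'$ is a flat of $N$, so the smallest flat of $N'$ containing $A$ contains the smallest flat of $N$ containing $A$; I would cite \cite{Oxley} for this rather than include the one-line verification.

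With that containment in hand, the conclusion is immediate. I would let $\cC'$ be an arbitrary perfect collection of circuits of $M$ and $C'$ an arbitrary circuit of $M$ with $C'\subseteq\cup\cC'$. Since $N$ satisfies $(\ref{main}*)$, we have $C'\in\cl_N(\cC')$, and then the containment above gives $C'\in\cl_{N'}(\cC')$. As $\cC'$ and $C'$ were arbitrary, $N'$ satisfies $(\ref{main}*)$, as required. The point worth emphasizing is that nothing on the $M$-side changes: perfectness of a collection of circuits is a property of $M$ alone, and the only ingredient of $(\ref{main}*)$ that involves $N$ is $\cl_N$, so monotonicity of closure under projection is the entire content of the proposition.

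I do not expect a genuine obstacle here; the one point that needs care is the direction of the closure containment — it is projections, not lifts, that increase closure — which is exactly why I would spell out that step rather than leave it implicit. An alternative route through rank functions, using the quotient inequality $r_{N'}(A\cup\{C\})-r_{N'}(A)\le r_N(A\cup\{C\})-r_N(A)$ for all $A\subseteq\cC(M)$ and all $C\in\cC(M)$, works equally well but is slightly less transparent than the closure formulation.
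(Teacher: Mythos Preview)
Your proof is correct and follows essentially the same approach as the paper: both reduce the statement to the single fact that $\cl_N(A)\subseteq\cl_{N'}(A)$ for every projection $N'$ of $N$, and then apply this directly to the hypothesis $C'\in\cl_N(\cC')$. The only difference is cosmetic—you spell out the deletion/contraction closure formulas to justify the containment, while the paper simply cites \cite[Prop.~7.3.6]{Oxley} for it.
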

\begin{proof}
Let $N'$ be a projection of $N$.
Let $\cC'$ be a perfect collection of circuits of $M$, and let $C$ be a circuit of $M$ so that $C\subseteq \cup\cC'$.
Then $C\in\cl_N(\cC')$, by $(\ref{main}*)$. 
Since $N'$ is a projection of $N$, we have $\cl_N(\cC')\subseteq \cl_{N'}(\cC')$ (see \cite[Prop. 7.3.6]{Oxley}).
Thus, $C\in \cl_{N'}(\cC')$, as desired.
\end{proof}

We now show that the matroid $N=N(n,j,p,K)$ on the circuits of $M(K_n^{\bZ_p^j})$ satisfies $(\ref{main}*)$.

\begin{proposition} \label{perfect}
Let $n \ge 3$ and $j\ge 2$ be integers, and let $p$ be a prime.
Let $i$ be a positive divisor of $j$, and let $N=N(n,j,p,\PG((j/i) - 1, p^i))$.
If $\cC'$ is a perfect collection of circuits of $M(K_n^{\bZ_p^j})$ and $C$ is a circuit of $M(K_n^{\bZ_p^j})$ contained in $\cup\cC'$, then $C\in \cl_{N}(\cC')$.
\end{proposition}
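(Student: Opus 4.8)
The plan is to unwind the definitions of $N$ and $g_i$, reduce the statement to a linear-algebra fact about the \emph{integral} cycle space of the subgraph carrying $\cC'$, and then transport that fact across the group isomorphism $f_i$. For the reductions: every balanced cycle of $K_n^{\bZ_p^j}$ is a loop of $N$, so $\cl_N(\cC')=\cl_N(\cC'')$, where $\cC''$ is the set of unbalanced cycles in $\cC'$, and $\cl_N(\varnothing)$ already contains every balanced cycle of $K_n^{\bZ_p^j}$. Hence if $C$ is balanced there is nothing to prove, and if $C$ is unbalanced it suffices, by Definition \ref{cycle map}, to show that $g_i(C)\in\cl_{\PG((j/i)-1,p^i)}(\{g_i(D):D\in\cC''\})$.

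For the graph picture, let $H$ be the subgraph of $K_n^{\bZ_p^j}$ with $E(H)=\cup\cC'$. Since $\cC'$ is perfect, $|E(H)|-r_{M(H)}(E(H))=|\cC'|$, so $\cC'$ is a perfect collection of $|E(M(H))|-r(M(H))$ circuits of $M(H)$; by Lemma \ref{find perfect} applied to $M(H)$ there is a maximal forest $F$ of $H$ with $\cC'=\{C_e:e\in E(H)\setminus E(F)\}$, where $C_e$ is the fundamental cycle of $e$ with respect to $F$. Fix a simple closed walk $W_D$ around each cycle $D$ of $H$ and set $\alpha_D=\phi(W_D)\in\bZ_p^j$; recall $g_i(D)=f_i'(\alpha_D)$ when $D$ is unbalanced. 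Choosing $W_{C_e}$ to traverse $e$ forwards, the key claim is that $\alpha_C=\sum_{e\in E(H)\setminus E(F)}\delta_e\,\alpha_{C_e}$ in $\bZ_p^j$ for suitable $\delta_e\in\{0,\pm1\}$. To prove it, regard the $\bZ_p^j$-labeling as the homomorphism from the group $C_1(H;\bZ)$ of integral $1$-chains of $H$ to $\bZ_p^j$ sending each oriented edge to its label; this carries the integral $1$-cycle $z_D$ of a directed cycle $D$ to a value of a simple closed walk around $D$. The fundamental cycles $\{C_e:e\in E(H)\setminus E(F)\}$ form a $\bZ$-basis of the integral cycle space $Z_1(H;\bZ)$, since they are carried to the standard basis by the coordinate projection $C_1(H;\bZ)\to\bZ^{E(H)\setminus E(F)}$ and $Z_1(F;\bZ)=0$. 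Expanding $z_C$ in this basis, the coefficient of $z_{C_e}$ equals the $e$-coordinate of $z_C$, which is $0$ if $e\notin E(C)$ and $\pm1$ if $e\in E(C)$, a simple cycle using each edge at most once; applying the homomorphism yields the claim.

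Now apply $f_i$. Since $f_i\colon\bZ_p^j\to\GF(p^i)^{j/i}$ is an additive isomorphism and integer scaling of a vector coincides with scaling by the corresponding residue in $\GF(p)\subseteq\GF(p^i)$, we obtain that $f_i(\alpha_C)$ is a $\GF(p^i)$-linear combination, with coefficients $\delta_e\in\{0,\pm1\}$, of the vectors $f_i(\alpha_{C_e})$. Each balanced $C_e$ gives $f_i(\alpha_{C_e})=0$; since $C$ is unbalanced, $f_i(\alpha_C)\ne0$ and spans the point $g_i(C)=f_i'(\alpha_C)$, while each unbalanced $C_e$ contributes a vector spanning $g_i(C_e)=f_i'(\alpha_{C_e})$. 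Thus $g_i(C)$ lies in the $\GF(p^i)$-span of $\{g_i(C_e):C_e\text{ unbalanced}\}$, and so $C\in\cl_N(\cC')$ by Definition \ref{cycle map}.

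The one step that is not routine is the integral cycle-space argument: over $\GF(2)$ one only gets $E(C)$ as a symmetric difference of the $E(C_e)$, which does not recover the $\bZ_p^j$-value of $C$ from those of the $C_e$ (orientation signs are lost, and an edge may occur in several of the $C_e$). It is exactly working over $\bZ$---using that a simple cycle has all coordinates in $\{0,\pm1\}$ and that fundamental cycles form an \emph{integral} basis---that makes the group values add correctly; everything else is unwinding Definition \ref{cycle map}.
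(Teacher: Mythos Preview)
Your proof is correct and takes a genuinely different route from the paper's. The paper argues by induction on $|\cC'|$: it builds a spanning tree of $G[\cup\cC']$, finds a path in that tree whose ends lie on $C$ to produce a theta graph with cycles $C_1,C_2,C$, checks that each $C_i$ is contained in the union of a proper subset of $\cC'$ (so $C_i\in\cl_N(\cC')$ by the inductive hypothesis), and then verifies directly from the three path-values in $\bZ_p^j$ that $\{C,C_1,C_2\}$ is dependent in $N$. Your argument is direct rather than inductive: you invoke the converse direction of Lemma~\ref{find perfect} to identify $\cC'$ with the set of fundamental cycles of a maximal forest of $H$, observe that the $\bZ_p^j$-labeling induces a group homomorphism $C_1(H;\bZ)\to\bZ_p^j$ carrying a directed cycle to its $\phi$-value, and then use the fact that the fundamental cycles form an integral basis of $Z_1(H;\bZ)$ with $z_C$ having $\{0,\pm 1\}$ coefficients. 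This immediately gives $\alpha_C$ as a $\{0,\pm1\}$-combination of the $\alpha_{C_e}$, and pushing through $f_i$ yields the desired span in $\PG((j/i)-1,p^i)$.

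What each approach buys: the paper's theta-graph induction keeps the machinery minimal (no chain groups) and makes the role of the three-term circuit in $N$ visually explicit, but needs a careful minimal-counterexample argument to locate the two auxiliary cycles. Your approach is cleaner and more conceptual---it identifies exactly why abelianness matters (the labeling extends to a homomorphism on integral chains) and why the $\GF(2)$ cycle space would be insufficient---and it gives the full linear relation in $N$ all at once rather than one theta at a time. Your use of the converse of Lemma~\ref{find perfect} to realise $\cC'$ as a set of fundamental circuits is the key step that lets you avoid the induction entirely.
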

\begin{proof}
We write $G=K_n^{\bZ_p^j}$, for convenience.
Let $|\cC'|$ be minimal so that the claim is false; then $|\cC'|\ge 2$.
Let $C$ be a circuit of $M(G)$ contained in $\cup\cC'$.
We freely use the fact that each subset of $\cC'$ is also a perfect collection of circuits of $M(G)$.
This implies that for each cycle $C'\in \cC'$, there is at least one edge in $C'\cap C$ that is not in any other cycle in $\cC'$.
Let $X$ be a transversal of these edges of $\cup \cC'$; note that $X\subseteq C$.
Then $$|(\cup\cC')-X| = |\cup \cC'| - |X| = r_{M(G)}(\cup \cC') + |\cC'| - |X| = r_{M(G)}(\cup \cC'),$$ since $\cC'$ is perfect, so 
$T=(\cup\cC')-X$ is the edge-set of a spanning forest of the graph $G[\cup\cC']$.
By the minimality of $|\cC'|$ and the fact that $G[C]$ is connected, this forest is in fact a tree.
We use the tree $T$ to define two cycles.

\begin{claim} \label{get theta}
There are cycles $C_1,C_2$ contained in $\cup\cC'$ which form a theta graph with $C$, such that $C_1,C_2\in \cl_N(\cC')$.
\end{claim}
\begin{proof}
Let $v_1,v_2$ be distinct vertices of $G[C]$, and let $P$ be the unique path in $T$ from $v_1$ to $v_2$.
Let $v_1'$ be the first vertex of $P$ in $C$ other than $v_1$, and let $P'$ be the segment of $P$ from $v_1$ to $v_1'$.
Then $P'$ is a path with both ends in $C$ which is internally vertex-disjoint from $C$, so $G[C\cup P']$ is a theta graph.

Let $P_1,P',P_2$ denote the three internally vertex-disjoint paths of the theta graph $G[C\cup P']$ from $v_1$ to $v_1'$.
Then $P_1$ and $P_2$ each contain an element of $X$;  if $P_i$ and $X$ are disjoint, then $G[P_{3-i}\cup P']$ is a circuit of the tree $T$.
Let $e_1\in P_1\cap X$ and let $e_2\in P_2\cap X$, and for each $i\in \{1,2\}$ let $C_i'$ denote the circuit in $\cC'$ that contains $e_i$.

For each $i\in \{1,2\}$ let $C_i=P_i\cup P'$; then $C_i$ is a cycle that does not contain $e_{3-i}$.
Since $\cC'$ is perfect, no element of $C_2' - (\cup\cC')$ is in a cycle of $G[(\cup\cC') - \{e_2\}]$; otherwise $|\cup \cC'| - r_{M(G)}(\cup \cC') > |\cC'|$.
Since $C_1$ does not contain $e_2$ and $C_1$ is a cycle, it follows that $C_1 \subseteq \cup (\cC'-\{C_2'\})$.
Similarly, $C_2 \subseteq \cup (\cC'-\{C_1'\})$.
So by the minimality of $|\cC'|$, we have $C_2\in \cl_N(\cC'-\{C_1'\})$ and $C_1\in \cl_N(\cC'-\{C_2'\})$, and thus $C_1,C_2\in \cl_N(\cC')$.
\end{proof}

We now use the definition of $N$.

\begin{claim} \label{use theta}
$C\in \cl_N(\{C_1,C_2\})$.
\end{claim}
\begin{proof}
Assume without loss of generality that the edges of $P'$ and $P_1$ are oriented from $v_1$ to $v_1'$, and the edges of $P_2$ are oriented from $v_1'$ to $v_1$.
Let $\alpha_1$ and $\alpha'$ denote the values in $\bZ_p^j$ of the walks from $v_1$ to $v_1'$ on $P_1$ and $P'$, respectively, and let $\alpha_2$ denote the value of the walk from $v_1'$ to $v_1$ on $P_2$.
Then $\alpha_1+\alpha_2\in \phi(C)$, while $\alpha_1-\alpha'\in \phi(C_1)$ and $\alpha'+\alpha_2\in \phi(C_2)$.
By the definition of $N=N(n,j,p,\PG((j/i) - 1, p^i))$, this implies that $\{C, C_1, C_2\}$ is a circuit of $N$, and so the claim holds. 
\end{proof}

Claims \ref{get theta} and \ref{use theta} combine to show that $C\in \cl_N(\{\cC'\})$.
\end{proof}

% construction conjectures

By Proposition \ref{perfect}, we may use Theorem \ref{main} to define the matroid $M(K_n^{\bZ_p^j})^{N(n,j,p,K)}$ for each prime $p$, each integer $j\ge 2$, each positive divisor $i$ of $j$, each integer $n\ge 3$, and each projection $K$ of $\PG((j/i) - 1, p^i)$.
This matroid is a rank-$r(K)$ lift of $M(K_n^{\bZ_p^j})$.
We now prove Theorem \ref{groups construction}. 

\begin{proof}[Proof of Theorem \ref{groups construction}]
By Proposition \ref{perfect}, the matroid $N(n,j,p,\PG(j-1, p))$ on the set of cycles of $K_n^{\bZ_p^j}$ satisfies $(\ref{main}*)$.
Also, by Definition \ref{cycle map}, a cycle of $K_n^{\bZ_p^j}$ is a loop of $N(n,j,p,\PG(j-1, p))$ if and only if it is balanced.
Let $K$ be the $(j-i)$-th truncation of $\PG(j-1, p)$.
Then $K$ is a projection of $\PG(j - 1,p)$, and each $X \subseteq E(\PG(j-1,p))$ satisfies $r_K(X) = \min(r_{\PG(j-1,p)}(X), i)$.
Since $K$ is a projection of $\PG(j-1, p)$, the map $g_1$ shows that $N(n,j,p,K)$ is a projection of $N(n,j,p,\PG(j-1,p))$.
Then since $N(n,j,p,\PG(j-1, p))$ satisfies $(\ref{main}*)$, Proposition \ref{project N} with $N = N(n,j,p,\PG(j-1, p))$ implies that $N(n,j,p,K)$ also satisfies $(\ref{main}*)$.
Also, each element $e$ of $\PG(j - 1,p)$ satisfies $r_K(\{e\}) = \min(r_{\PG(j-1,p)}(\{e\}), i) = 1$, so $K$ is loopless.
Since $K$ is loopless, by Definition \ref{cycle map}, a cycle of $K_n^{\bZ_p^j}$ is a loop of $N(n,j,p,K)$ if and only if it is balanced.
By Theorem \ref{main}, this implies that a cycle of $K_n^{\bZ_p^j}$ is a circuit of $M=M(K_n^{\bZ_p^j})^{N(n,j,p,K)}$ if and only if it is balanced.
Since $r(K)=i$, the matroid $M(K_n^{\bZ_p^j})^{N(n,j,p,K)}$ is a rank-$i$ lift of $M(K_n^{\bZ_p^j})$.
Thus, the theorem holds with $M=M(K_n^{\bZ_p^j})^{N(n,j,p,K)}$.
\end{proof}

To prove Theorem \ref{groups construction}, we used the construction of Theorem \ref{main}.
Conversely, we conjecture that every lift of $M(K_n^{\bZ_p^j})$ that has each balanced cycle of $K_n^{\bZ_p^j}$ as a circuit arises from this construction.

\begin{conjecture}
Let $n \ge 3$ and $j\ge 2$ be integers, let $p$ be a prime, and let $M$ be a lift of $M(K_n^{\bZ_p^j})$ so that each balanced cycle of $K_n^{\bZ_p^j}$ is a circuit of $M$.
% if and only if it is balanced.
Then there is a positive divisor $i$ of $j$ and a projection $K$ of $\PG((j/i) - 1, p^i)$ so that $M\cong M(K_n^{\bZ_p^j})^{N(n,j,p,K)}$.
\end{conjecture}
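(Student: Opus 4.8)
The plan is to argue in three stages; I expect the first to be the main obstacle.

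\medskip

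\textbf{Stage 1: realize $M$ as $M(G)^N$.} Write $G=K_n^{\bZ_p^j}$. The first task is to produce a matroid $N$ on the cycles of $G$ which satisfies $(\ref{main}*)$ and has $M\cong M(G)^N$; this is precisely the case of Conjecture~\ref{converse} in which the base matroid is graphic, and of Conjecture~\ref{matroid} applied to $M(G)$. Since $M(G)$ is representable over every field, I would attempt this directly with the derived-matroid machinery behind Theorem~\ref{representable}: on a perfect collection $\cC'$ of cycles of $G$ set $r_N(\cC'):=r_M(\cup\cC')-r_{M(G)}(\cup\cC')$ (a quantity between $0$ and $r(M)-r(M(G))$ because $M$ is a lift), extend to arbitrary sets of cycles via Lemma~\ref{find perfect} and $N$-closure, and check that this is a matroid satisfying $(\ref{main}*)$ whose Theorem~\ref{main} rank function is $r_M$. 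Note right away that every balanced cycle $C$ is a loop of $N$, since $C$ is a circuit of both $M$ and $M(G)$, so $r_M(C)=|C|-1=r_{M(G)}(C)$.

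\medskip

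\textbf{Stage 2: extract the $\bZ_p^j$-linear structure of $N$.} Here one uses that $G$ is fully $\bZ_p^j$-labeled. Applying $(\ref{main}*)$ to the perfect collections furnished by theta subgraphs of $G$, via exactly the argument of Claims~\ref{get theta} and~\ref{use theta}, shows that the three cycles $C,C_1,C_2$ of any theta subgraph satisfy $C\in\cl_N(\{C_1,C_2\})$ and that their $\bZ_p^j$-values obey $v(C)=v(C_1)+v(C_2)$ up to sign. Since $n\ge 3$ and the labeling is full, every additive identity $\alpha=\beta+\gamma$ in $\bZ_p^j$ is witnessed by such a theta subgraph; iterating these relations, and using that balanced cycles are loops, one deduces that any two unbalanced cycles whose values span the same $\GF(p)$-line are parallel in $N$. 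Thus $N$ is the pullback along $C\mapsto[v(C)]$ of a loopless matroid $\bar N$ on $E(\PG(j-1,p))$ (balanced cycles going to a loop), and every $\GF(p)$-linear dependence among points of $\PG(j-1,p)$ is a dependence of $\bar N$.

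\medskip

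\textbf{Stage 3: force an intermediate field.} It remains to pin down $\bar N$. Since flats of $\bar N$ are $\bar N$-closed, Stage~2 shows that the preimage in $\bZ_p^j$ of any flat of $\bar N$, together with $0$, is closed under addition and negation, hence is a $\GF(p)$-subspace of $\bZ_p^j$; so the geometric lattice of flats of $\bar N$ sits inside the $\GF(p)$-subspace lattice of $\bZ_p^j$ as a join-closed family containing every atom. Feeding in the remaining content of $(\ref{main}*)$, now for the perfect collections of cycles that are not theta subgraphs, should force the rank-$2$ flats of $\bar N$ to be $\GF(p^i)$-planes for one fixed divisor $i\mid j$; the fundamental theorem of projective geometry, together with the fact that the only sub-division-rings of $\GF(p^j)$ are the subfields $\GF(p^i)$ with $i\mid j$, then identifies $N$ with $N(n,j,p,K)$ for a projection $K$ of $\PG((j/i)-1,p^i)$ as in Definition~\ref{cycle map} (discarding the choice of parallel representatives, which does not change the matroid built by Theorem~\ref{main}). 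Hence $M\cong M(G)^N=M(G)^{N(n,j,p,K)}$.

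\medskip

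The chief obstacle is Stage~1: realizing $M$ as some $M(G)^N$ at all is the graphic case of the open Conjecture~\ref{converse}, and while the representability of $M(G)$ and the rigidity of $K_n^{\bZ_p^j}$ make the explicit derived-matroid candidate above plausible, verifying that it is a matroid with the right rank function is the heart of the matter. Within Stage~3 the delicate point is to show that $(\ref{main}*)$ for the non-theta perfect collections really does make the parallel classes of $\bar N$ into lines over a single subfield, ruling out ``hybrid'' geometric sublattices and confirming that a non-modular $\bar N$ comes from one projection $K$ rather than a more elaborate construction; this should be tractable via the classification of finite projective geometries. Finally, since the stated hypothesis permits $M$ to have unbalanced circuits, a fully rigorous argument must also check that any such extra circuits are forced into a subfield-compatible pattern, or else reduce to the case where the cycles that are circuits are exactly the balanced ones.
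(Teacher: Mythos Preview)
This statement is a \emph{conjecture} in the paper, not a theorem; the paper offers no proof of it. Immediately after stating it, the paper only remarks that it ``may be easier to prove in the case that no unbalanced cycle of $K_n^{\bZ_p^j}$ is a circuit of $M$.'' So there is no paper proof against which to compare your proposal.

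As a plan of attack, your outline is coherent and you have correctly located the main obstacle: Stage~1 asks for the special case of Conjecture~\ref{converse} in which the base matroid is $M(K_n^{\bZ_p^j})$, and that conjecture is open in the paper. Your candidate rank function $r_N(\cC')=r_M(\cup\cC')-r_{M(G)}(\cup\cC')$ on perfect collections is the natural one (it is forced by the rank formula of Theorem~\ref{main} if $M=M(G)^N$ is to hold), but verifying that it extends to a matroid on all of $\cC(M(G))$ satisfying $(\ref{main}*)$ is exactly the unresolved content of Conjectures~\ref{converse} and~\ref{matroid}; the derived-matroid machinery of Theorem~\ref{representable} produces matroids $N$ from representations of $M(G)$, not from an arbitrary lift $M$, so it does not by itself supply Stage~1. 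Stages~2 and~3 are plausible in outline (the theta-graph argument of Claims~\ref{get theta}--\ref{use theta} does go in the direction you indicate), but since they are contingent on Stage~1, the proposal as a whole remains a strategy rather than a proof, and the paper does not claim to know how to carry out any of these stages.
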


This may be easier to prove in the case that no unbalanced cycle of $K_n^{\bZ_p^j}$ is a circuit of $M$. 

\end{section}

% p-group converse

\begin{section}{Other Abelian Groups}
In this section we prove Theorem \ref{groups converse}.
For each finite group $\Gamma$ and integer $n\ge 3$, we define $\cM_{n, \Gamma}$ to be the class of lifts $M$ of $M(K_n^{\Gamma})$ for which a cycle of $K_n^{\Gamma}$ is a circuit of $M$ if and only if it is a balanced cycle of $K_n^{\Gamma}$.
These are the lifts of $M(K_n^{\Gamma})$ that respect the $\Gamma$-labeling.
Note that each matroid in $\cM_{n, \Gamma}$ is simple, since each $2$-element cycle of $M(K_n^{\Gamma})$ is unbalanced.
Also, if $\Gamma$ is the trivial group, then $\cM_{n, \Gamma}$ is empty, so we will restrict our attention to nontrivial groups.

For each nontrivial finite group $\Gamma$ and integer $n\ge 3$, the class $\cM_{n, \Gamma}$ certainly contains the rank-$1$ lift $\LG(n,\Gamma)$.
Theorem \ref{groups converse} says that if $\Gamma$ is a nontrivial finite abelian group that is not isomorphic to $\bZ_p^j$ for some prime $p$ and integer $j\ge 2$, then $\cM_{n,\Gamma}$ contains only $\LG(n,\Gamma)$, up to isomorphism.
To prove this, we will use three lemmas, which each apply to arbitrary finite groups.

The first lemma uses local information about the balanced cycles of $K_n^{\Gamma}$.
For each element $\alpha\in \Gamma$, we write $E_{\alpha}$ for $\{(\{i,j\},\alpha)\colon 1\le i<j\le n\}$; these are the edges of $K_n^{\Gamma}$ labeled by $\alpha$.
More generally, for each set $A\subseteq \Gamma$, we write $E_A$ for $\{(\{i,j\},\alpha)\colon 1\le i<j\le n \textrm{ and } \alpha\in A\}$.
For convenience, for each $\alpha \in \Gamma$ and $1\le i<j\le n$, we write $\alpha_{ij}$ for the edge $(\{i,j\}, \alpha)$.

% rank of E_{\alpha}

\begin{lemma} \label{alpha}
Let $n\ge 3$ be an integer, let $\Gamma$ be a finite group with identity $\ep$, and let $M\in \cM_{n,\Gamma}$.
Then each non-identity element $\alpha\in\Gamma$ satisfies $r_M(E_{\{\alpha, \ep\}})=n$, and $E_{\alpha}\cap \cl_M(E_{\ep})=\varnothing$.
\end{lemma}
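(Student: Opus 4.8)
The plan is to treat the two assertions in turn, building the second on the first, and using throughout the fact (valid for every lift) that $r_M(Y)\ge r_{M(K_n^{\Gamma})}(Y)$ for all $Y\subseteq E(K_n^{\Gamma})$, together with the fact that every circuit of a lift is a union of circuits of the base matroid \cite[Prop.~7.3.6]{Oxley}. First I would record an easy preliminary: since every cycle of the all-$\ep$ clique on $E_{\ep}$ has closed walks of group value $\ep$, every such cycle is balanced and hence a circuit of $M$. Consequently any spanning tree $T_{\ep}$ of that clique is independent in $M$ (it is a spanning tree of $K_n^{\Gamma}$), every $\ep$-edge outside $T_{\ep}$ lies in $\cl_M(T_{\ep})$ via its fundamental cycle, and therefore $r_M(E_{\ep})=n-1$ and $\cl_M(T_{\ep})=\cl_M(E_{\ep})$.

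For the claim $E_{\alpha}\cap\cl_M(E_{\ep})=\varnothing$, I would argue by contradiction. If some edge $\alpha_{ij}\in\cl_M(E_{\ep})=\cl_M(T_{\ep})$, then $T_{\ep}\cup\{\alpha_{ij}\}$ contains a unique circuit $D$ of $M$, and $D\ni\alpha_{ij}$. Since $M$ is a lift, $D$ is a union of cycles of $K_n^{\Gamma}$; but $T_{\ep}$ is a forest, so the subgraph with edge set $T_{\ep}\cup\{\alpha_{ij}\}$ contains exactly one cycle $C_0$, namely $\alpha_{ij}$ together with the $T_{\ep}$-path between $i$ and $j$, whence $D=C_0$. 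A one-line computation of the group value of the closed walk traversing the $\ep$-path from $i$ to $j$ and then $\alpha_{ij}$ backward yields $\alpha^{-1}\ne\ep$, so $C_0$ is unbalanced and therefore not a circuit of $M$, a contradiction. In particular, fixing any $\alpha$-edge $\alpha_{12}$ gives $\alpha_{12}\notin\cl_M(E_{\ep})$, so $r_M(E_{\ep}\cup\{\alpha_{12}\})=n$.

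For the claim $r_M(E_{\{\alpha,\ep\}})=n$, it then remains only to show $E_{\{\alpha,\ep\}}\subseteq\cl_M(E_{\ep}\cup\{\alpha_{12}\})$. The identity edges already lie in $\cl_M(E_{\ep})$, so I only need each $\alpha$-edge. The edge $\alpha_{12}$ is there by construction. For $\alpha_{1l}$ with $l\ge 3$ I would use the triangle on $\{1,2,l\}$ with edge set $\{\alpha_{12},\ep_{2l},\alpha_{1l}\}$, whose closed walk has group value $\alpha\cdot\ep\cdot\alpha^{-1}=\ep$, so it is balanced and hence a circuit of $M$, placing $\alpha_{1l}$ in the desired closure. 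For the remaining edges $\alpha_{kl}$ with $2\le k<l$ I would use the triangle on $\{1,k,l\}$ with edge set $\{\ep_{1k},\alpha_{kl},\alpha_{1l}\}$, again balanced by the same cancellation $\ep\cdot\alpha\cdot\alpha^{-1}=\ep$, together with the fact that $\alpha_{1l}$ is already in $\cl_M(E_{\ep}\cup\{\alpha_{12}\})$. This covers every edge of $E_{\{\alpha,\ep\}}$, giving $r_M(E_{\{\alpha,\ep\}})=r_M(E_{\ep}\cup\{\alpha_{12}\})=n$.

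The main obstacle is the first claim: a priori $M$ may have circuits that are not cycles of $K_n^{\Gamma}$, so membership $\alpha_{ij}\in\cl_M(E_{\ep})$ does not obviously produce an unbalanced graph cycle. Restricting to a spanning tree $T_{\ep}$ of $E_{\ep}$ together with the single edge $\alpha_{ij}$ is the key device, since it forces the fundamental circuit of $M$ to coincide with the unique cycle present in that subgraph; this is exactly where the lift hypothesis is used. The second claim is then a routine matter of exhibiting the right balanced triangles and should present no difficulty.
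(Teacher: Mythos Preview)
Your proof is correct. The balanced-triangle argument you use to show that $E_{\{\alpha,\ep\}}\subseteq\cl_M(E_{\ep}\cup\{\alpha_{12}\})$ is exactly the paper's argument, with the same two families of triangles.

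Where you differ is in the disjointness claim $E_{\alpha}\cap\cl_M(E_{\ep})=\varnothing$. The paper proves it \emph{after} the spanning step: assuming $\alpha_{12}\in\cl_M(E_{\ep})$, the spanning step forces all of $E_{\alpha}$ into $\cl_M(E_{\ep})$, whence $r_M(E_{\{\alpha,\ep\}})=r_M(E_{\ep})=n-1=r_{M(K_n^{\Gamma})}(E_{\{\alpha,\ep\}})$, so the restrictions of $M$ and $M(K_n^{\Gamma})$ to $E_{\{\alpha,\ep\}}$ coincide; but the latter has parallel edges while $M$ is simple. Your route is instead local and does not rely on the spanning step: you pin down the fundamental $M$-circuit of $\alpha_{ij}$ over a spanning tree $T_{\ep}$ and use the ``circuits of a lift are unions of circuits of the base'' characterization to identify it with the unique graph cycle present, which is visibly unbalanced. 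Both arguments are short; yours has the advantage of handling each $\alpha$-edge directly without the detour through simplicity, while the paper's has the advantage of not needing to invoke the union-of-circuits characterization of lifts at this point.
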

\begin{proof}
Clearly $E_{\ep}$ spans $M(K_n^{\Gamma})$, since each element of $M(K_n^{\Gamma})$ is parallel to an element in $E_{\ep}$.
Let $B=\{\alpha_{12}\}\cup E_{\ep}$.
For each $3\le j\le n$, the cycle $\{\alpha_{12},\ep_{2j},\alpha_{1j}\}$ is balanced, and is thus a circuit of $M$, since $M\in \cM_{n,\Gamma}$.
Since $\{\alpha_{12},\ep_{2j},\alpha_{1j}\}$ is a circuit of $M$ for all $3\le j\le n$, it follows that $\alpha_{1j}\in\cl_M(B)$ for each $2\le j\le n$.
For all $2\le i<j\le n$, the cycle $\{\ep_{1i},\alpha_{ij},\alpha_{1j}\}$ is balanced, and is thus a circuit of $M$; this implies that $\alpha_{ij}\in\cl_M(B)$.
Thus, $E_{\alpha}\subseteq \cl_M(B)$, so $r_M(E_{\{\alpha, \ep\}}) \le r(M(K_n^{\Gamma}))+1 =n$.

We now show that $E_{\alpha}\cap \cl_M(E_{\ep})=\varnothing$; this implies that $r_M(E_{\{\alpha, \ep\}})=n$.
If $\alpha_{12}\in \cl_M(E_{\ep})$, then by the previous paragraph we have $E_{\alpha}\subseteq \cl_M(E_{\ep})$.
%Then $r_M(E_{\{\alpha, \ep\}})=r_{M(K_n^{\Gamma})}(E_{\{\alpha, \ep\}})$, and 
Since $M$ is a lift of $M(K_n^{\Gamma})$ and $M|E_{\ep}=M(K_n^{\Gamma})|E_{\ep}$, this implies that $M|(E_{\{\alpha, \ep\}})=M(K_n^{\Gamma})|(E_{\{\alpha, \ep\}})$.
But $M$ is simple and $M(K_n^{\Gamma})|(E_{\{\alpha, \ep\}})$ is not, so this is a contradiction.
Thus, $\alpha_{12}\notin \cl_M(E_{\ep})$.
The same argument applies to each element of $E_{\alpha}$, so $E_{\alpha}\cap \cl_M(E_{\ep})=\varnothing$.
\end{proof}

% generating set

The next lemma uses a more global argument.

\begin{lemma} \label{generate}
Let $n\ge 3$ be an integer, let $\Gamma$ be a finite group with identity $\ep$, and let $M\in \cM_{n,\Gamma}$.
Let $A$ be a subset of $\Gamma$, and let $\langle A \rangle$ be the subgroup of $\Gamma$ generated by $A$.
Then $E_{\langle A \rangle} \subseteq \cl_M(E_{A\cup\{\ep\}})$.
\end{lemma}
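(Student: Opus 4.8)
The plan is to show that the set $\cT=\{\gamma\in\Gamma\colon E_\gamma\subseteq\cl_M(E_{A\cup\{\ep\}})\}$ is a subgroup of $\Gamma$ that contains $A$; since every subgroup of $\Gamma$ containing $A$ contains $\langle A\rangle$, this yields $\langle A\rangle\subseteq\cT$, which is exactly the assertion $E_{\langle A\rangle}\subseteq\cl_M(E_{A\cup\{\ep\}})$. The only ingredient is an elementary observation used repeatedly. For vertices $p<q<r$ of $K_n^{\Gamma}$ and any $a,b\in\Gamma$, the triangle $\{a_{pq},\,b_{qr},\,(ab)_{pr}\}$ is a balanced cycle of $K_n^{\Gamma}$ (the simple closed walk $p\to q\to r\to p$ has value $ab(ab)^{-1}=\ep$), hence, since $M\in\cM_{n,\Gamma}$, a circuit of $M$; so each of its three edges lies in the closure in $M$ of the other two. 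Because $K_n^{\Gamma}$ has an edge of every label between every pair of vertices, this lets us prescribe the two ``input'' labels of a triangle and read off the forced third label.

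With this in hand, first I would note that $A\cup\{\ep\}\subseteq\cT$ is immediate. Next I would prove that $\cT$ is closed under inverses: fix $\gamma\in\cT$; I must show $(\gamma^{-1})_{ij}\in\cl_M(E_{A\cup\{\ep\}})$ for all $i<j$. If there is a vertex $k\in[n]\setminus\{i,j\}$ with $k<i$ or $k>j$, then one application of the triangle observation with input labels from $\{\ep,\gamma\}$ (whose edge classes lie in $\cl_M(E_{A\cup\{\ep\}})$) forces $(\gamma^{-1})_{ij}$ into the closure; for instance, when $k<i$ the balanced triangle on $\{k,i,j\}$ with label $\gamma$ on $\{k,i\}$ and $\ep$ on $\{k,j\}$ has label $\gamma^{-1}$ on $\{i,j\}$. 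The only pair admitting no such vertex is $\{1,n\}$, which I would handle in a second stage: having already placed $(\gamma^{-1})_{1k}$ in the closure for some $1<k<n$ (which exists since $n\ge 3$), the triangle on $\{1,k,n\}$ with label $\gamma^{-1}$ on $\{1,k\}$ and $\ep$ on $\{k,n\}$ forces label $\gamma^{-1}$ on $\{1,n\}$.

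Then I would show $\cT$ is closed under products. Given $\alpha,\beta\in\cT$ --- so that, by the previous step, the edge classes $E_{\alpha},E_{\beta},E_{\alpha^{-1}},E_{\beta^{-1}},E_{\ep}$ all lie in $\cl_M(E_{A\cup\{\ep\}})$ --- and $i<j$, I would choose any $k\in[n]\setminus\{i,j\}$ (one exists since $n\ge 3$) and split into the cases $i<k<j$, $k<i$, and $k>j$. In each case the triangle observation, with a suitable choice of input labels among $\alpha,\beta,\alpha^{-1},\beta^{-1}$, places $(\alpha\beta)_{ij}$ in $\cl_M(E_{A\cup\{\ep\}})$: e.g.\ for $i<k<j$ use the triangle on $\{i,k,j\}$ with labels $\alpha$ on $\{i,k\}$ and $\beta$ on $\{k,j\}$; for $k<i$ use the triangle on $\{k,i,j\}$ with labels $\alpha^{-1}$ on $\{k,i\}$ and $\beta$ on $\{k,j\}$; and for $k>j$ use the triangle on $\{i,j,k\}$ with labels $\alpha$ on $\{i,k\}$ and $\beta^{-1}$ on $\{j,k\}$. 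This shows $\cT$ is a subgroup of $\Gamma$ containing $A$, which completes the proof.

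The main difficulty is organisational rather than conceptual: each invocation of the triangle observation is legitimate only when its two input edges already lie in $\cl_M(E_{A\cup\{\ep\}})$, and preserving this dictates the order of the argument --- closure under inverses must precede closure under products (the product step uses $\alpha^{-1}$- and $\beta^{-1}$-labelled edges), and within the inverse step the pair $\{1,n\}$ must be treated after every other pair. Beyond this bookkeeping, the argument needs only routine evaluations of walk-values in $\Gamma$, and in particular does not use Lemma~\ref{alpha}.
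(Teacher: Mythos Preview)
Your proof is correct and follows essentially the same approach as the paper: both arguments show that the set of labels $\gamma$ with $E_\gamma\subseteq\cl_M(E_{A\cup\{\ep\}})$ is closed under inverses and products by repeatedly using that balanced triangles are circuits of $M$. The only difference is organizational: the paper fixes vertex $1$ (and initially $2,3$) as a hub and propagates outward in three stages, while you treat a general pair $i<j$ directly by case analysis on the position of an auxiliary vertex $k$; the content is the same.
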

\begin{proof}
Let $\circ$ be the (multiplicative) group operation of $\Gamma$.
We write $B=E_{A \cup \{\ep\}}$ for convenience.
Let $\alpha,\beta\in \Gamma$ so that $E_{\alpha}\cup E_{\beta}\subseteq \cl_M(B)$.
We will show that $E_{\alpha^{-1}}\subseteq \cl_M(B)$, and that $E_{\alpha\circ \beta}\subseteq \cl_M(B)$; then each element $\gamma\in \Gamma$ generated by $A$ satisfies $E_{\gamma}\subseteq \cl_M(B)$, and so $E_{\langle A \rangle}\subseteq \cl_M(B)$.
We freely use the fact that each balanced cycle of $K_n^{\Gamma}$ is a circuit of $M$, since $M\in \cM_{n,\Gamma}$.

Since $\{\alpha^{-1}_{12},\alpha_{23},\ep_{13}\}$ is a circuit of $M$ and $\alpha_{23},\ep_{13}\in \cl_M(B)$, we have $\alpha^{-1}_{12}\in \cl_M(B)$.
Then, since $\{\alpha_{12}^{-1},\ep_{2j},\alpha_{1j}^{-1}\}$ is a circuit of $M$ and $\alpha_{12}^{-1},\ep_{2j}\in \cl_M(B)$, we have $\alpha_{1j}^{-1}\in \cl_M(B)$ for each $2\le j\le n$.
Finally, since $\{\alpha_{1i},\alpha^{-1}_{ij},\ep_{1j}\}$ is a circuit of $M$ and $\alpha_{1i},\ep_{1j}\in \cl_M(B)$, we have $\alpha^{-1}_{ij}\in\cl_M(B)$ for all $2\le i<j\le n$, and thus $E_{\alpha^{-1}}\subseteq \cl_M(B)$.

We now show that $E_{\alpha\circ \beta}\subseteq \cl_M(B)$.
Since $\{(\alpha\circ\beta)_{12}, \beta^{-1}_{23}, \alpha_{13}\}$ is a circuit of $M$ and $\beta^{-1}_{23},\alpha_{13}\in \cl_M(B)$, we have $(\alpha\circ\beta)_{12}\in \cl_M(B)$.
Since $\{\beta^{-1}_{12}, \alpha^{-1}_{2j}, (\alpha\circ\beta)_{1j}\}$ is a circuit of $M$ and $\beta^{-1}_{12},\alpha^{-1}_{2j}\in\cl_M(B)$, we have $(\alpha\circ\beta)_{1j}\in \cl_M(B)$ for each $2\le j \le n$.
Finally, since $\{\alpha^{-1}_{1i}, (\alpha\circ\beta)_{ij}, \beta_{1j}\}$ is a circuit of $M$ and $\alpha^{-1}_{1i}, \beta_{1j}\in \cl_M(B)$, we have $(\alpha\circ\beta)_{ij}\in \cl_M(B)$ for all $2\le i<j\le n$. 
Thus $E_{\alpha\circ\beta}\subseteq \cl_M(B)$.
\end{proof}

% equivalence relation

The following lemma defines an equivalence relation on the non-identity elements of $\Gamma$.
Its proof follows without difficulty from Lemmas \ref{alpha} and \ref{generate}.

\begin{lemma} \label{equivalence}
Let $n\ge 3$ be an integer, let $\Gamma$ be a finite group with identity $\ep$, and let $M\in \cM_{n,\Gamma}$.
For $\alpha,\beta\in\Gamma-\{\ep\}$, write $\alpha\sim\beta$ if $r_M(E_{\{\alpha,\beta,\ep\}})=n$.
Then
\begin{enumerate}[(i)]
\item $\sim$ is an equivalence relation,

\item each equivalence class $A$ of $\sim$ satisfies $r_M(E_{A\cup \{\ep\}})=n$, and 

\item for each equivalence class $A$ of $\sim$, the set $A\cup\{\ep\}$ is a subgroup of $\Gamma$.
\end{enumerate}
\end{lemma}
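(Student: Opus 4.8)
The plan is to derive all three parts from a single reformulation of the relation $\sim$. First I would observe that, by Lemma \ref{alpha}, every non-identity $\alpha \in \Gamma$ satisfies $r_M(E_{\{\alpha,\ep\}}) = n$; since $E_{\{\alpha,\beta,\ep\}} = E_{\{\alpha,\ep\}} \cup E_\beta$ contains $E_{\{\alpha,\ep\}}$, monotonicity of rank gives $r_M(E_{\{\alpha,\beta,\ep\}}) \ge n$, with equality if and only if $E_\beta \subseteq \cl_M(E_{\{\alpha,\ep\}})$. Hence $\alpha \sim \beta$ if and only if $E_\beta \subseteq \cl_M(E_{\{\alpha,\ep\}})$; because the set $\{\alpha,\beta,\ep\}$ is symmetric in $\alpha$ and $\beta$, this is also equivalent to $E_\alpha \subseteq \cl_M(E_{\{\beta,\ep\}})$.

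With this in hand, part (i) is quick. Reflexivity is immediate from $E_\alpha \subseteq \cl_M(E_{\{\alpha,\ep\}})$, and symmetry is built into the definition of $\sim$. For transitivity, suppose $\alpha \sim \beta$ and $\beta \sim \gamma$: from the first, $E_\beta \subseteq \cl_M(E_{\{\alpha,\ep\}})$, and since trivially $E_\ep \subseteq \cl_M(E_{\{\alpha,\ep\}})$ as well, we obtain $E_{\{\beta,\ep\}} \subseteq \cl_M(E_{\{\alpha,\ep\}})$, hence $\cl_M(E_{\{\beta,\ep\}}) \subseteq \cl_M(E_{\{\alpha,\ep\}})$. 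The second relation then gives $E_\gamma \subseteq \cl_M(E_{\{\beta,\ep\}}) \subseteq \cl_M(E_{\{\alpha,\ep\}})$, so $\alpha \sim \gamma$.

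For part (ii) I would fix an equivalence class $A$ and a representative $\alpha_0 \in A$. Every $\alpha \in A$ satisfies $\alpha \sim \alpha_0$, so $E_\alpha \subseteq \cl_M(E_{\{\alpha_0,\ep\}})$; taking the union over $\alpha \in A$ yields $E_{A \cup \{\ep\}} \subseteq \cl_M(E_{\{\alpha_0,\ep\}})$, whence $r_M(E_{A \cup \{\ep\}}) \le r_M(E_{\{\alpha_0,\ep\}}) = n$, while the reverse inequality is clear since $E_{\{\alpha_0,\ep\}} \subseteq E_{A \cup \{\ep\}}$. The same computation also shows $\cl_M(E_{A \cup \{\ep\}}) = \cl_M(E_{\{\alpha_0,\ep\}})$, a fact I would reuse in part (iii). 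Part (iii) then reduces to showing $\langle A \rangle = A \cup \{\ep\}$: one inclusion is immediate, and for the other, if $\gamma \in \langle A \rangle$ with $\gamma \ne \ep$, then Lemma \ref{generate} gives $E_\gamma \subseteq E_{\langle A \rangle} \subseteq \cl_M(E_{A \cup \{\ep\}}) = \cl_M(E_{\{\alpha_0,\ep\}})$, so by the reformulation $\gamma \sim \alpha_0$ and hence $\gamma \in A$.

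I do not expect a genuine obstacle here: the substance is entirely contained in Lemmas \ref{alpha} and \ref{generate}, and what remains is essentially bookkeeping with matroid closures. The one point requiring care is the initial reformulation of $\sim$ in closure terms, together with the observation that $\cl_M(E_{A \cup \{\ep\}})$ is independent of the choice of representative $\alpha_0 \in A$ — which is precisely what makes parts (ii) and (iii) go through cleanly.
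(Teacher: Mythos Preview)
Your proof is correct and proceeds exactly as the paper intends: the paper does not spell out a proof of this lemma at all, merely noting that it ``follows without difficulty from Lemmas \ref{alpha} and \ref{generate},'' and your argument does precisely that, via a clean closure-based reformulation of $\sim$.
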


% main result, converse for p-groups

We now prove the following restatement of Theorem \ref{groups converse}.

\begin{theorem} \label{p-group}
Let $n\ge 3$ be an integer, let $\Gamma$ be a finite abelian group, and let $M \in \cM_{n,\Gamma}$.
If $r(M) - r(M(K_n^{\Gamma})) > 1$, then there is a prime $p$ and an integer $j\ge 2$ so that $\Gamma\cong \bZ_p^j$.
\end{theorem}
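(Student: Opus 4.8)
The plan is to extract a purely group-theoretic statement from Lemma~\ref{equivalence} and then prove it by a covering/counting argument. Let $A_1,\dots,A_m$ be the equivalence classes of $\sim$ on $\Gamma-\{\ep\}$, and set $H_i=A_i\cup\{\ep\}$. By Lemma~\ref{equivalence}(iii) each $H_i$ is a subgroup of $\Gamma$; distinct classes are disjoint, so $H_i\cap H_j=\{\ep\}$ whenever $i\ne j$; and clearly $\Gamma=\bigcup_i H_i$. I first observe that each $H_i$ is a \emph{proper} subgroup: if $H_i=\Gamma$ for some $i$, then $A_i=\Gamma-\{\ep\}$ is the only equivalence class, so $m=1$, and Lemma~\ref{equivalence}(ii) gives $r(M)=r_M(E(K_n^{\Gamma}))=r_M(E_{A_1\cup\{\ep\}})=n$; since $r(M(K_n^{\Gamma}))=n-1$, this contradicts the hypothesis $r(M)-r(M(K_n^{\Gamma}))>1$. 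Thus $\Gamma$, with $|\Gamma|>1$, is a union of proper subgroups that pairwise intersect in $\{\ep\}$.

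It therefore suffices to prove: \emph{a finite abelian group $Q$ with $|Q|>1$ that is a union of proper subgroups with pairwise trivial intersections satisfies $Q\cong\bZ_p^{\,j}$ for some prime $p$ and integer $j\ge2$}; applying this to $Q=\Gamma$ finishes the proof. For $x\in Q-\{\ep\}$ write $K(x)$ for the unique part of the cover containing $x$ (unique because distinct parts meet only in $\ep$), so $\langle x\rangle\le K(x)$, and if $K(x)\ne K(y)$ then $\langle x\rangle\cap\langle y\rangle\le K(x)\cap K(y)=\{\ep\}$, whence $\langle x,y\rangle=\langle x\rangle\oplus\langle y\rangle$ since $Q$ is abelian. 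The crucial point is that such an $R=\langle x,y\rangle$ with $K(x)\ne K(y)$ is contained in no part: if $R\le K$ then $x\in K$ forces $K=K(x)$, so $y\in K(x)$, a contradiction. Hence intersecting the cover of $Q$ with $R$ writes $R$ as a union of \emph{proper} subgroups of $R$ with pairwise trivial intersections.

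Now I would argue in two steps. \emph{Step 1: $Q$ is a $p$-group for some prime $p$.} If $x,y$ lie in distinct parts and $p\mid|x|$, $q\mid|y|$ for distinct primes $p\ne q$, replace $x,y$ by powers of orders $p$ and $q$ and form $R=\langle x,y\rangle\cong\bZ_{pq}$; by the previous paragraph $R$ is a union of proper subgroups — necessarily among $\{\ep\}$ and the unique subgroups of orders $p$ and $q$ — with pairwise trivial intersections, but these together contain at most $p+q-1<pq=|R|$ elements, a contradiction. Since each part is proper, every nonidentity element has a partner in a different part, so it follows that all element orders are powers of a single fixed prime $p$; in particular $|Q|=p^{\,j}$. \emph{Step 2: $Q$ has exponent $p$.} If some $x$ had order $p^2$, choose a nonidentity $y\notin K(x)$, which by Step 1 we may take of order $p$, and form $R=\langle x,y\rangle\cong\bZ_{p^2}\oplus\bZ_p$, again a union of proper subgroups with pairwise trivial intersections. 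Every subgroup of $\bZ_{p^2}\oplus\bZ_p$ of order $p^2$ contains the common socle $L$ of the order-$p^2$ cyclic subgroups, so at most one part can contain $L$; the remaining parts then have order $p$ and miss $L-\{\ep\}$, so the cover reaches at most $(p^2-1)+p(p-1)=2p^2-p-1$ nonidentity elements, which is less than $p^3-1$ because $p(p-1)^2>0$ — a contradiction. Hence $Q$ has exponent $p$ and order $p^{\,j}$, so $Q\cong\bZ_p^{\,j}$; and $j\ge2$ since $\bZ_p$ has no proper nontrivial subgroup and so cannot be a union of proper subgroups.

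\textbf{Main obstacle.} The heart of the argument is Step 2, together with identifying $\bZ_{p^2}\oplus\bZ_p$ as the right obstruction. Restricting the cover of $\Gamma$ to a cyclic subgroup $\langle\alpha\rangle$ yields nothing, because $\langle\alpha\rangle$ always lies inside the single part $H_{[\alpha]}$. Progress requires pairing $\alpha$ with an element $\beta$ from \emph{outside} $H_{[\alpha]}$ — available precisely because every $H_i$ is proper — so that $\langle\alpha,\beta\rangle\cong\bZ_{|\alpha|}\oplus\bZ_{|\beta|}$ escapes all parts and must therefore inherit a covering by proper subgroups of its own, which the subgroup lattice of $\bZ_{p^2}\oplus\bZ_p$ (and of $\bZ_{pq}$) simply cannot support. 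Pinning down these two small obstructions, and verifying that the pairwise-trivial-intersection condition is genuinely violated by the element counts, is where the real work lies.
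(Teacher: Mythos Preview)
Your proof is correct and takes a genuinely different route from the paper's. Both arguments begin with Lemma~\ref{equivalence}, extracting from $M$ a cover $\Gamma=\bigcup_i H_i$ by proper subgroups with pairwise trivial intersection. From there the paper stays close to the matroid: it repeatedly invokes Lemma~\ref{generate} to produce specific equivalences --- for the prime step, that $\alpha'\sim\beta'$ because $\langle\alpha',\beta'\rangle$ is cyclic; for the exponent step, that inside $\bZ_p\oplus\bZ_{p^j}$ one has $(0,1)\sim(0,p)\sim(1,1)$, forcing the whole subgroup into one class. You instead isolate the purely group-theoretic statement (a finite abelian group admitting such a partition is elementary abelian) and prove it by a subgroup-counting argument: restrict the cover to $R=\langle x,y\rangle$ with $x,y$ in distinct parts, so $R$ inherits a partition by proper subgroups, and then check that neither $\bZ_{pq}$ nor $\bZ_{p^2}\oplus\bZ_p$ has enough proper subgroups with pairwise trivial intersection to cover itself. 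The paper's approach is shorter given that Lemma~\ref{generate} is already available, and it makes transparent exactly which balanced triangles are doing the work. Your approach cleanly decouples the matroid theory from the group theory and yields a self-contained lemma about abelian group partitions --- essentially the abelian case of the Baer--Kegel--Suzuki classification the paper alludes to after Conjecture~\ref{true?} --- which is of independent interest and would transfer verbatim to any other setting producing such a partition. One small point worth making explicit in your write-up of Step~1: the contradiction shows that any two elements in distinct parts have orders that are powers of the \emph{same} prime, and you then need two applications of this (pivoting through an element outside $K(x_0)$) to conclude that a single prime works globally; your sentence ``every nonidentity element has a partner in a different part, so it follows\ldots'' compresses this.
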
 
\begin{proof}
Let $\circ$ be the (multiplicative) group operation of $\Gamma$, and let $\epsilon$ denote the identity element of $\Gamma$.
For $\alpha,\beta\in\Gamma-\{\ep\}$, we write $\alpha\sim\beta$ if $r_M(E_{\{\alpha,\beta,\ep\}})=n$; then $\sim$ is an equivalence relation by Lemma \ref{equivalence}(i).
Let $\cA$ denote the set of equivalence classes under $\sim$; by hypothesis and Lemma \ref{equivalence}(ii) we have $|\cA|\ge 2$.
By Lemma \ref{generate}, this implies that $\Gamma$ is not cyclic.

We first use the fact that $\Gamma$ is abelian.

\begin{claim} \label{rel prime}
Let $\alpha, \beta \in \Gamma - \{\ep\}$. 
If there is a prime that divides the order of $\alpha$ but not the order of $\beta$, then $\alpha \sim \beta$. 
\end{claim}
\begin{proof}
%Let $\alpha,\beta\in\Gamma-\{\ep\}$ so that the orders of $\alpha$ and $\beta$ are relatively prime.
By hypothesis, there are elements $\alpha',\beta'\in\Gamma$ with distinct prime orders so that $\alpha$ generates $\alpha'$, and $\beta$ generates $\beta'$.
Then $\alpha'\sim\alpha$, and $\beta'\sim\beta$, by Lemma \ref{generate}.
Since $\alpha'$ and $\beta'$ have distinct prime orders and $\Gamma$ is abelian, the subgroup of $\Gamma$ generated by $\{\alpha',\beta'\}$ is cyclic.
Thus, Lemma \ref{generate} implies that $\alpha'\sim\beta'$.
Since $\sim$ is an equivalence relation, this implies that $\alpha\sim\beta$.
\end{proof}

% find p

We now find the prime $p$.

\begin{claim} \label{prime}
There is a prime $p$ so that each element of $\Gamma$ has order $p$.
\end{claim}
\begin{proof}
By \ref{rel prime}, for any two elements of $\Gamma$ in different equivalence classes, there is a prime $p$ so that each has order equal to a power of $p$.
Since there are at least two equivalence classes of $\sim$, this implies that each element of $\Gamma$ has order equal to a power of $p$.

Now, let $\alpha$ be an element of order $p$.
We will show that each element in a different equivalence class has order $p$.
Since there are at least two equivalence classes, this implies that each element of $\Gamma$ has order $p$.
Let $\beta$ be in a different equivalence class than $\alpha$, and suppose that the order of $\beta$ is not $p$.
Then the subgroup of $\Gamma$ generated by $\{\alpha,\beta\}$ is isomorphic to $\bZ_p \oplus \bZ_{p^j}$ for some $j\ge 2$.
The elements $(0,1)$ and $(0,p)$ are in a common cyclic subgroup, as are the elements $(1,1)$ and $(0,p)$. 
By Lemma \ref{generate}, this implies that $(0,1) \sim (0,p) \sim (1,1)$, so $(0,1) \sim (1,1)$.
But the set $\{(0,1), (1,1)\}$ generates $\bZ_p \oplus \bZ_{p^j}$, so by Lemma \ref{equivalence}(iii), all elements of the subgroup generated by $\{\alpha, \beta\}$ are equivalent.
In particular, $\alpha \sim \beta$, a contradiction.
\end{proof}

Since $\Gamma$ is abelian and not cyclic, \ref{prime} implies that $\Gamma \cong \bZ_p^j$ for some $j\ge 2$.
\end{proof}

% conjecture

We conjecture that Theorem \ref{p-group} can be extended to all finite groups.

\begin{conjecture} \label{true?}
Let $n\ge 3$ be an integer, let $\Gamma$ be a finite group, and let $M \in \cM_{n,\Gamma}$.
If $r(M) - r(M(K_n^{\Gamma})) > 1$, then there is a prime $p$ and an integer $j\ge 2$ so that $\Gamma\cong \bZ_p^j$.
\end{conjecture}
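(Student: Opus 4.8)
\emph{A possible approach to Conjecture~\ref{true?}.}
Lemmas~\ref{alpha}, \ref{generate} and~\ref{equivalence} are already proved for an arbitrary finite group $\Gamma$, so for any $M\in\cM_{n,\Gamma}$ with $r(M)-r(M(K_n^{\Gamma}))>1$ we still obtain the equivalence relation $\sim$ on $\Gamma-\{\ep\}$: its classes $A$ each satisfy $r_M(E_{A\cup\{\ep\}})=n$, each set $A\cup\{\ep\}$ is a proper subgroup, each cyclic subgroup of $\Gamma$ lies in a single class, and there are at least two classes. Thus the sets $A\cup\{\ep\}$ form a nontrivial partition of $\Gamma$ by proper subgroups. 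Since no group is the union of two proper subgroups, there are in fact at least three classes; and since $E_A\subseteq\cl_M(\{\gamma_{12}\}\cup E_{\ep})$ for each class $A$ and each $\gamma\in A$ (from the proof of Lemma~\ref{alpha}), a counting argument shows the number of classes is at least $r(M)-r(M(K_n^{\Gamma}))$. Commutativity of $\Gamma$ is used in the proof of Theorem~\ref{p-group} only through Claim~\ref{rel prime} and the second half of Claim~\ref{prime}, so the task is to recover those two statements without it.

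A helpful reduction is that $M|E_H\in\cM_{n,H}$ for every nontrivial subgroup $H\le\Gamma$: restricting a lift of $M(K_n^{\Gamma})$ gives a lift of $M(K_n^{H})$, and a cycle of $K_n^{H}$ is balanced in $K_n^{H}$ if and only if it is balanced in $K_n^{\Gamma}$. Working with a counterexample of least order, every proper subgroup $H$ thus satisfies $r_M(E_H)\le n$ or $H\cong\bZ_p^j$ with $j\ge 2$. Setting aside the second possibility, we get $\alpha\sim\beta$ whenever $\langle\alpha,\beta\rangle\ne\Gamma$, so any two inequivalent elements of $\Gamma$ generate $\Gamma$; in particular $\Gamma$ is two-generated, and Lemma~\ref{generate} applied to $H_1\cup H_2$ for two inequivalent elements then forces $r(M)=r(M(K_n^{\Gamma}))+2$. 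One would analyze this rank-two-lift situation (at least three classes, all meeting in $\{\ep\}$) directly, the chief remaining difficulty being to handle, or to exploit, proper subgroups isomorphic to $\bZ_p^j$ whose restriction of $M$ has corank at least two.

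In parallel, the \emph{nilpotent} case reduces to the case of $p$-groups: a finite nilpotent group is the direct product of its Sylow subgroups, so elements of coprime order commute and the proof of Claim~\ref{rel prime} applies verbatim, after which the first half of Claim~\ref{prime} shows $\Gamma$ is a $p$-group. For a $p$-group $\Gamma$ carrying such an $M$ one must prove the $p$-group analogue of the second half of Claim~\ref{prime}, namely that $\Gamma$ has exponent $p$ and is abelian; here the relation $\alpha\sim\alpha^k$, applied inside $\langle\alpha,\beta\rangle$ for $\alpha,\beta$ in distinct classes, should replace the explicit computation in $\bZ_p\oplus\bZ_{p^j}$, with the centre and the commutator subgroup of $\langle\alpha,\beta\rangle$ supplying the intermediate elements needed to collapse $\langle\alpha,\beta\rangle$ into one class when it is non-abelian. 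For groups that are not nilpotent, the fallback is the classification of finite groups admitting a nontrivial partition (Baer, Kegel, Suzuki)---besides $p$-groups and elementary abelian groups, only Frobenius groups, the groups $\mathrm{PSL}(2,q)$, $\mathrm{PGL}(2,q)$ and close relatives, and the Suzuki groups $\mathrm{Sz}(q)$ arise---and one would eliminate each family using the rank bound above, the closure identities of Lemma~\ref{generate} on generating pairs, and the extra relations available for a \emph{non-commuting} pair $\alpha,\beta$: when $\alpha\beta\ne\beta\alpha$, the cycles $\{\alpha_{12},\beta_{23},(\alpha\circ\beta)_{13}\}$ and $\{\beta_{12},\alpha_{23},(\beta\circ\alpha)_{13}\}$ are both balanced, so $E_{\alpha\circ\beta}$, $E_{\beta\circ\alpha}$ and hence $E_{\langle\alpha,\beta\rangle}$ all lie in the rank-at-most-$(n+1)$ flat $\cl_M(E_{\{\alpha,\beta,\ep\}})$.

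The hard part is the step that has no abelian counterpart. For a non-commuting pair $\alpha,\beta$ lying in different classes, Lemma~\ref{generate} tells us only that the flat $\cl_M(E_{\{\alpha,\beta,\ep\}})$, of rank at most $n+1$, contains every edge labeled by the possibly large and complicated subgroup $\langle\alpha,\beta\rangle$; converting this into a contradiction, or into the conclusion $\alpha\sim\beta$, seems to require a far more precise description of $M$ restricted to $E_{\langle\alpha,\beta\rangle}$ than Lemmas~\ref{alpha}--\ref{equivalence} provide---essentially proving the conjecture for $\langle\alpha,\beta\rangle$ first, and in particular resolving the non-abelian $p$-group case, which the classification of partitions leaves entirely open. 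For this reason one should expect to establish the conjecture only in special cases (the nilpotent case, and with more effort perhaps the solvable case), which is why the general statement is left open here.
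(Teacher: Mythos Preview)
This statement is left as an open conjecture in the paper; there is no proof to compare against. The paper's own discussion is much briefer than yours: it observes that Lemma~\ref{equivalence} forces $\Gamma$ to admit a nontrivial partition into subgroups, cites the Baer--Kegel--Suzuki classification of such groups, and then says explicitly that ``it is unclear how to proceed for non-abelian groups with a nontrivial partition.'' Your outline is consistent with this and goes considerably further: the minimal-counterexample reduction via $M|E_H\in\cM_{n,H}$, the observation that a minimal counterexample is two-generated with $r(M)-r(M(K_n^{\Gamma}))=2$, and the reduction of the nilpotent case to $p$-groups are all sound additions not present in the paper.

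Two small points. First, the ``extra relations available for a non-commuting pair'' in your final paragraph are not actually extra: the balanced cycles you exhibit hold regardless of whether $\alpha$ and $\beta$ commute, and the containment $E_{\langle\alpha,\beta\rangle}\subseteq\cl_M(E_{\{\alpha,\beta,\ep\}})$ is already the content of Lemma~\ref{generate}, whose proof nowhere uses commutativity. What non-commutativity buys you is that $\alpha\beta$ and $\beta\alpha$ are \emph{distinct} elements both lying in this flat, but you do not yet explain how to exploit this. Second, your minimal-counterexample argument sets aside the possibility that a proper subgroup $H$ is itself isomorphic to $\bZ_p^j$ with $r_M(E_H)>n$; since such subgroups certainly arise (e.g.\ inside non-abelian $p$-groups of exponent $p$), this case cannot simply be deferred---it is exactly where the difficulty concentrates, as you acknowledge in your final paragraph. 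Your honest assessment that the approach establishes only special cases matches the paper's position.
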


We expect that Lemmas \ref{alpha}-\ref{equivalence} would all be useful for proving this conjecture.
Indeed, Lemma \ref{equivalence} implies that if $M \in \cM_{n, \Gamma}$ and $r(M) - r(M(K_n^{\Gamma})) > 1$, then $\Gamma$ has a nontrivial decomposition into subgroups with pairwise trivial intersection. 
This decomposition is a \emph{partition} of $\Gamma$, and the classification of finite groups that admit a nontrivial partition was completed by Baer, Kegel, and Suzuki \cite{Baer, Kegel, Suzuki}.
While Lemma \ref{equivalence} and this classification make partial progress towards Conjecture \ref{true?}, it is unclear how to proceed for non-abelian groups with a nontrivial partition.
\end{section}

% general examples

\begin{section}{Examples}
Now that we have applied Theorem \ref{main} to group-labeled graphs, we consider other applications. 
Given a matroid $M$, it is unclear when there exists a matroid $N$ of rank at least three on the set of circuits of $M$ that satisfies $(\ref{construction}*)$ (or equivalently, $(\ref{main}*)$).
A rank-$1$ matroid $N$ on the circuits of $M$ can be constructed from any linear class $\cC$ of circuits of $M$, by taking $\cC$ to be precisely the set of loops of $N$.
Also, the rank-$2$ uniform matroid on the set of circuits of $M$ trivially satisfies $(\ref{construction}*)$, because the collection $\cC'$ is always spanning in $N$.
In this case, the matroid $M^N$ can also be obtained from applying Brylawski's construction (Theorem \ref{brylawski}) to $M$ with the empty linear class, and then again applying the construction with the empty linear class.
The following proposition shows that, for any matroid $M$ of corank at least three, there exists a rank-$3$ matroid $N$ on the set of circuits of $M$ that satisfies $(\ref{construction}*)$.

\begin{proposition} \label{rank-3}
Let $M$ be a matroid of corank at least three, and let $\cI$ be the collection of sets of circuits of $M$ so that $\cC \in \cI$ if and only if $|\cC| \le 3$ and each $\cC' \subseteq \cC$ satisfies $|\cC'| \le |\cup \cC'| - r_M(\cup \cC')$.
Then $\cI$ is the collection of independent sets of a rank-$3$ matroid $N$ on the set of circuits of $M$, and $N$ satisfies $(\ref{construction}*)$.
\end{proposition}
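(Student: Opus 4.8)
The plan is to verify the three independence axioms for $\cI$, then compute $r(N)$, and finally check $(\ref{construction}*)$; the whole argument becomes clean after one translation into the dual matroid $M^*$. Write $n_M(X)=|X|-r_M(X)$. Each circuit $C$ of $M$ is a cocircuit of $M^*$, so its complement $H_C:=E(M)\setminus C$ is a hyperplane of $M^*$, and for any collection $\cC'$ of circuits of $M$ one has $\cap_{C\in\cC'}H_C=E(M)\setminus(\cup\cC')$. Combined with the identity $n_M(X)=r^*(M)-r_{M^*}(E(M)\setminus X)$ this gives
$$n_M\big(\cup\cC'\big)=r^*(M)-r_{M^*}\Big(\bigcap_{C\in\cC'}H_C\Big).$$
Two consequences will be used repeatedly. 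First, if $C_1\ne C_2$ are circuits of $M$ then $H_{C_1}$ and $H_{C_2}$ are distinct hyperplanes of $M^*$, so $r_{M^*}(H_{C_1}\cap H_{C_2})\le r^*(M)-2$ and hence $n_M(C_1\cup C_2)\ge 2$. Second, it follows that for a set $\cC$ of circuits with $|\cC|\le 3$ the only inequality in the definition of $\cI$ that is not automatic is the one with $\cC'=\cC$ of size $3$; that is, $\cC\in\cI$ if and only if $|\cC|\le 2$, or $|\cC|=3$ and $n_M(\cup\cC)\ge 3$.

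Axioms (I1) and (I2) are then immediate. For augmentation, take $\cC_1,\cC_2\in\cI$ with $|\cC_1|<|\cC_2|$. If $\cC_1\subseteq\cC_2$, or if $|\cC_1|\le 1$, then any $C\in\cC_2\setminus\cC_1$ works, since $\cC_1\cup\{C\}$ is then a subset of $\cC_2$ or has at most two elements. The remaining case, $\cC_1=\{C_1,C_2\}$, $|\cC_2|=3$, $\cC_1\not\subseteq\cC_2$, is the heart of the proof; set $X=C_1\cup C_2$. Suppose for contradiction that no $C\in\cC_2\setminus\cC_1$ can be adjoined. Then $n_M(X\cup C)\le 2$ for each such $C$, which (as $n_M(X)\ge 2$ by the first consequence) forces $n_M(X)=2$ and $n_M(X\cup C)=2$ for every $C\in\cC_2$ (automatically so when $C\in\{C_1,C_2\}$). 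Put $F=H_{C_1}\cap H_{C_2}$, a flat of $M^*$ with $r_{M^*}(F)=r^*(M)-n_M(X)=r^*(M)-2$. Applying the displayed identity to $X\cup C$ yields $r_{M^*}(F\cap H_C)=r_{M^*}(F)$ for each $C\in\cC_2$, and since $F\cap H_C\subseteq F$ are flats of equal rank, $F\subseteq H_C$. Hence $F\subseteq\cap_{C\in\cC_2}H_C=E(M)\setminus(\cup\cC_2)$, so $n_M(\cup\cC_2)\le r^*(M)-r_{M^*}(F)=2$, contradicting $\cC_2\in\cI$. This establishes (I3), so $\cI$ is the family of independent sets of a matroid $N$ with $r(N)\le 3$.

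To see $r(N)=3$, use $r^*(M)\ge 3$: choose a basis $B$ of $M$ and distinct $e_1,e_2,e_3\in E(M)\setminus B$, with fundamental circuits $C_{e_1},C_{e_2},C_{e_3}$. These are distinct, and since $(C_{e_1}\cup C_{e_2}\cup C_{e_3})\setminus\{e_1,e_2,e_3\}$ is an independent subset of $B$ spanning $C_{e_1}\cup C_{e_2}\cup C_{e_3}$, we get $n_M(C_{e_1}\cup C_{e_2}\cup C_{e_3})=3$, so $\{C_{e_1},C_{e_2},C_{e_3}\}\in\cI$. For $(\ref{construction}*)$, let $\cC'$ be a perfect collection of circuits of $M$ and let $C$ be a circuit with $C\subseteq\cup\cC'$, $C\notin\cC'$. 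If $|\cC'|\le 1$ there is nothing to prove. If $\cC'=\{C_1,C_2\}$, then $C_1\cup C_2\cup C=C_1\cup C_2$, so $n_M(C_1\cup C_2\cup C)=n_M(C_1\cup C_2)=|\cC'|=2<3$; thus $\{C_1,C_2,C\}\notin\cI$ while the perfect pair $\{C_1,C_2\}\in\cI$, giving $C\in\cl_N(\cC')$. If $|\cC'|\ge 3$, then by Lemma \ref{find perfect} applied to $M|(\cup\cC')$ the collection $\cC'$ is the set of fundamental circuits of $M|(\cup\cC')$ with respect to some basis, and the same computation as for $r(N)=3$, carried out inside $M|(\cup\cC')$, shows that any three members of $\cC'$ lie in $\cI$; hence $r_N(\cC')=3=r(N)$ and $\cl_N(\cC')=\cC(M)\ni C$.

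The step I expect to be genuinely delicate is the last case of (I3). Arguing directly inside $M$—that a family of circuits, each of which leaves $n_M$ unchanged when adjoined to $X$, cannot have a union of nullity exceeding $2$—seems awkward, because unions of sets that are independent "over $X$" need not remain so; after passing to $M^*$, however, it reduces to the evident fact that the corresponding hyperplanes all contain one fixed flat of corank $2$, which is what makes the whole proof go through.
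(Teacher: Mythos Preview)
Your proof is correct, and the overall strategy matches the paper's: establish that every pair of distinct circuits lies in $\cI$, then for (I3) show that if no element of $\cC_2$ augments $\cC_1=\{C_1,C_2\}$, each $C\in\cC_2$ is forced to lie inside $\cup\cC_1$, whence $n_M(\cup\cC_2)\le 2$, a contradiction. The one genuine difference is your translation to $M^*$: where the paper simply asserts that $n_M(\cup\cC_1)=n_M(\cup\cC_1\cup C)$ implies $C\subseteq\cup\cC_1$ (which does require that $C$ is a circuit, a small step left implicit), you replace this by the equivalent statement that the flats $F$ and $F\cap H_C$ have equal rank and hence coincide, which is self-evident. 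For $(\ref{construction}*)$ you case-split on $|\cC'|$ and, when $|\cC'|\ge 3$, invoke Lemma~\ref{find perfect} to exhibit an $N$-independent triple inside $\cC'$, so $\cC'$ spans $N$; the paper instead observes directly that $r_N(\cC')=2$ forces $|\cC'|=2$ (via the same ``no circuit contained in the union of the others'' reasoning). These are minor reformulations of the same argument; the dual viewpoint buys you a cleaner justification of the one nontrivial implication.
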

\begin{proof}
Clearly $\varnothing \in \cI$ and $\cI$ is closed under taking subsets.
Note that each $2$-element set $\{C_1, C_2\}$ of circuits of $M$ is in $\cI$, because $|C_1 \cup C| - r_M(C_1 \cup C) \ge 2$ since $M|(C_1 \cup C)$ contains distinct circuits.
Let $\cC_1$ and $\cC_2$ be sets in $\cI$ so that $|\cC_1| < |\cC_2|$. 
Assume that there is no circuit $C \in \cC_2$ for which $\cC_1 \cup \{C\} \in \cI$.
Since each $2$-element set of circuits is in $\cI$, we may assume that $|\cC_1| = 2$ and $|\cC_2| = 3$.

Let $C \in \cC_2 - \cC_1$. 
Then each $2$-element subset of $\cC_1 \cup \{C\}$ is in $\cI$.
Thus,
$$2 \le |\cup \cC_1| - r_M(\cup \cC_1) \le |\cup (\cC_1 \cup \{C\})| - r_M(\cup (\cC_1 \cup \{C\})) < 3,$$
where the first inequality holds because $\cC_1 \in \cI$, and the third holds because $\cC_1 \cup \{C\} \notin \cI$ and each subset is in $\cI$.
So equality holds throughout, which implies that $C \subseteq \cup \cC_1$.
The same reasoning applies to each circuit in $\cC_2 - \cC_1$, and so $\cup \cC_2 \subseteq \cup \cC_1$.
But then $$|\cup \cC_2| - r_M(\cup \cC_2) \le |\cup \cC_1| - r_M(\cup \cC_1) = 2,$$
so $\cC_2 \notin \cI$, a contradiction.
Thus, $\cI$ is the collection of independent sets of a matroid $N$ of rank at most three.
Since $M$ has corank at least three, it contains three circuits $C_1, C_2, C_3$ such that none is contained in the union of the other two; then $\{C_1, C_2, C_3\} \in \cI$, so $r(N) = 3$.

We now show that $N$ satisfies $(\ref{construction}*)$.
Let $\cC'$ be a perfect collection of circuits of $M$, and let $C \notin \cC'$ be a circuit of $M$ contained in $\cup \cC'$.
We may assume that $r_N(\cC') = 2$, or else $(\ref{construction}*)$ trivially holds since $r(N) = 3$.
Since no circuit in $\cC'$ is contained in the union of the others, this implies that $|\cC'| = 2$.
Since $C \subseteq \cup \cC'$, we have 
$$|\cup (\cC' \cup \{C\})| - r_M(\cup (\cC' \cup \{C\})) = |\cup \cC'| - r_M(\cup \cC') = |\cC'| < |\cC' \cup \{C\}|,$$ 
and so $\cC' \cup \{C\} \notin \cI$.
Then $r_N(\cC' \cup \{C\}) = r_N(\cC')$, and so $C \in \cl_N(\cC')$, as desired.
\end{proof}

% binary matroids

For representable matroids we can do much better, using the derived matroid of Longyear \cite{Longyear} and Oxley and Wang \cite{Oxley+Wang}.
Let $M$ be an $\bF$-representable matroid with ground set $E = \{e_1,e_2,\dots,e_m\}$ for some field $\bF$, and fix an $\bF$-representation $A$ of $M$ with column vectors $\varphi(e_1), \varphi(e_2), \dots, \varphi(e_m)$.
For each circuit $C$ of $M$, there is a vector $\textbf{c}_C = (c_1, c_2, \dots, c_m)$ in $\bF^m$ such that $\sum_{i=1}^m c_i \varphi(e_i) = 0$ and $c_i \ne 0$ if and only if $e_i \in C$; this vector is unique up to multiplying by a nonzero scalar.
Let $A'$ denote the matrix over $\bF$ with columns indexed by the circuits of $M$ so that the column vector of each circuit $C$ is $\textbf{c}_C$.
Then $M(A')$ is the \emph{derived matroid} of the representation $A$ of $M$.
Oxley and Wang show that the rank of the derived matroid of any representation of $M$ is $r(M^*)$ \cite[Prop. 9.2.2]{Oxley+Wang}.
In addition, they show that the derived matroid of any representation of $U_{1,n}$ is $M(K_n)$ \cite[Lemma 2.5]{Oxley+Wang}, because each pair of elements in $[n]$ forms a circuit of $U_{1,n}$.
Note that $r(M(K_n)) = n-1 = r^*(U_{1,n})$.
We show that the derived matroid of any representation of a matroid $M$ satisfies $(\ref{construction}*)$.

\begin{proposition} \label{binary}
Let $M$ be matroid representable over a field $\bF$, and let $N$ be the derived matroid of a representation $A$ of $M$. Then $N$ satisfies $(\ref{construction}*)$, and $M^N$ is the free matroid on $E(M)$.
\end{proposition}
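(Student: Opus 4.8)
The plan is to show the two assertions separately: first that $N$ (the derived matroid of a representation $A$ of $M$) satisfies $(\ref{construction}*)$, and then that $M^N$ is free on $E(M)$. For the second assertion, once $(\ref{construction}*)$ holds, Theorem \ref{main} applies, and by Oxley and Wang's result \cite[Prop. 9.2.2]{Oxley+Wang} we have $r(N) = r^*(M) = r(M^*)$, so $r(M^N) = r(M) + r(N) = r(M) + r^*(M) = |E(M)|$. Since $M^N$ has ground set $E(M)$ and rank $|E(M)|$, it is the free matroid; so the real content is verifying $(\ref{construction}*)$.

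For $(\ref{construction}*)$, let $\cC'$ be a perfect collection of circuits of $M$ and let $C'$ be a circuit of $M$ with $C' \subseteq \cup\cC'$; I want $C' \in \cl_N(\cC')$, equivalently that $\mathbf{c}_{C'}$ lies in the span of $\{\mathbf{c}_C \colon C \in \cC'\}$ over $\bF$. The key idea is to restrict attention to $M|(\cup\cC')$. Write $W = \cup\cC'$ and $s = r_M(W)$, so that $|\cC'| = |W| - s$ because $\cC'$ is perfect. Each $\mathbf{c}_C$ for $C \in \cC'$ (and also $\mathbf{c}_{C'}$) is supported on $W$, hence lives in the kernel of the submatrix $A[W]$ of $A$ with columns indexed by $W$; this kernel has dimension $|W| - r_M(W) = |W| - s = |\cC'|$. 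So it suffices to show that $\{\mathbf{c}_C \colon C \in \cC'\}$ is linearly independent, for then these $|\cC'|$ vectors form a basis of a $|\cC'|$-dimensional kernel that already contains $\mathbf{c}_{C'}$, giving $\mathbf{c}_{C'} \in \operatorname{span}\{\mathbf{c}_C \colon C \in \cC'\}$ as required.

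To see that $\{\mathbf{c}_C \colon C \in \cC'\}$ is independent, I will use Lemma \ref{find perfect}: a perfect collection of $|W| - r_M(W)$ circuits of $M|W$ is exactly the set of fundamental circuits of $M|W$ with respect to some basis $B$ of $M|W$. So index $\cC' = \{C_e \colon e \in W - B\}$ where $C_e$ is the fundamental circuit through $e$; then $\mathbf{c}_{C_e}$ has a nonzero entry in coordinate $e$, and zero in every coordinate of $W - B$ other than $e$ (since $C_e \cap (W - B) = \{e\}$). Thus the $|W - B| \times |W - B|$ submatrix of $[\mathbf{c}_{C_e}]_{e \in W-B}$ indexed by the rows $W - B$ is diagonal with nonzero diagonal, hence nonsingular, so the vectors $\{\mathbf{c}_{C_e}\}$ are linearly independent. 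That closes the argument; there is no real obstacle beyond correctly bookkeeping the supports and invoking Lemma \ref{find perfect}. The one point requiring a touch of care is that $\cC'$ might not be a collection of $|W| - r_M(W)$ circuits if some circuit in $\cC'$ were contained in $\cup(\cC' \setminus \{C\})$ — but that is exactly ruled out by the definition of perfect, so Lemma \ref{find perfect} does apply to $M|W$ directly.
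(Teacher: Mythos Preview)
Your proof is correct and follows essentially the same route as the paper: restrict to $W=\cup\cC'$, observe that the vectors $\mathbf{c}_C$ for $C\in\cC'\cup\{C'\}$ all lie in the kernel of $A[W]$, compute that this kernel has dimension $|\cC'|$, and show that $\{\mathbf{c}_C:C\in\cC'\}$ is independent and hence a basis of the kernel. The only cosmetic difference is in the independence step: the paper argues directly from the definition of a perfect collection that each $C\in\cC'$ has an element in no other circuit of $\cC'$, which immediately gives linear independence of the $\mathbf{c}_C$; you instead invoke Lemma~\ref{find perfect} to realize $\cC'$ as a set of fundamental circuits and then read off a nonsingular diagonal submatrix. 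These are the same observation in slightly different clothing.
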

\begin{proof}
Let $\cC'$ be a perfect collection of circuits of $M$.
Since each circuit in $\cC'$ has an element that is not in any other circuit in $\cC'$, the set $\cC'$ is independent in $N$.
Let $N'$ denote the derived matroid of the matrix $A[(\cup \cC')]$; then $N'$ is a restriction of $N$, and 
$$r(N') = r((M|(\cup \cC'))^*) = |\cup \cC'| - r_M(\cup \cC') = |\cC'|,$$
where the last equality holds because $\cC'$ is perfect.
Since $\cC'$ is independent in $N$, this implies that $\cC'$ is a basis of $N'$.
Thus, each circuit $C$ of $M$ contained in $\cup \cC'$ satisfies $C \in \cl_N(\cC')$, and so $N$ satisfies $(\ref{construction}*)$.
Since $r(N) = r(M^*)$, Theorem \ref{construction} shows that $r(M^N) = r(M) + r(M^*) = |M|$, so $M^N$ is a free matroid.
\end{proof}

For example, the derived matroid $M(K_n)$ of any representation of $U_{1,n}$ satisfies $(\ref{construction}*)$, and $U_{1,n}^{M(K_n)}$ is the free matroid on $[n]$ because $r(U_{1,n}^{M(K_n)}) = r(U_{1,n}) + r(M(K_n)) = n$.

Given one matroid $N$ on the circuits of $M$ that satisfies $(\ref{construction}*)$, we can construct many more, using Proposition \ref{project N}.
If $M$ is representable, then Propositions \ref{binary} and \ref{project N} show that, for each integer $1 \le k < r^*(M)$, there are many rank-$k$ matroids on the circuits of $M$ that satisfy $(\ref{construction}*)$.
This proves Theorem \ref{representable}.
However, if $N$ and $N'$ are two different matroids on the circuits of $M$, it may be the case that $M^N = M^{N'}$.
For example, a matroid $M$ representable over a field $\bF$ other than $\GF(2)$ or $\GF(3)$ may have several non-isomorphic derived matroids, depending on the representation \cite[Theorem 2.6]{Oxley+Wang}, and these all lead to the free matroid, by Proposition \ref{binary}.
\end{section}

% the converse

\begin{section}{The Converse}
Theorem \ref{brylawski} states that every elementary lift of a given matroid $M$ arises from a linear class of circuits of $M$.
More generally, the following restatement of Conjecture \ref{converse} states that every lift of $M$ arises from the construction of Theorem \ref{construction}.

\begin{conjecture} \label{converse2}
Let $M$ be a matroid. For every lift $K$ of $M$, there is a matroid $N$ on the circuits of $M$ so that $N$ satisfies $(\ref{construction}*)$, and $K\cong M^N$. 
\end{conjecture}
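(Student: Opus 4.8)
The plan is to fix a matroid witnessing the lift, recast the conclusion as a single rank-function identity, prove that identity completely when the witness is representable, and then isolate the step I expect to be genuinely hard.

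Let $K$ be a lift of $M$, so there are a matroid $P$ and a set $X\subseteq E(P)$ with $P\del X=K$ and $P/X=M$. Replacing $X$ by a basis of $P|X$ and deleting the remaining elements of $X$ (which are loops of $P/X$ and so affect neither $P\del X$ nor $P/X$), I may assume that $X$ is independent in $P$; write $k=r(K)-r(M)$. Since $r_M(Y)=r_P(X\cup Y)-r_P(X)$ for every $Y\subseteq E(M)$, Theorem \ref{construction} reduces Conjecture \ref{converse2} to the following: there is a matroid $N$ on $\cC(M)$ that satisfies $(\ref{construction}*)$ and for which
\[
r_N(\{C:\text{$C$ is a circuit of $M|Y$}\})=r_P(X)+r_P(Y)-r_P(X\cup Y)\qquad\text{for all }Y\subseteq E(M).
\]
Given such an $N$, Theorem \ref{construction} produces a lift $M^N$ of $M$ with $r_{M^N}(Y)=r_M(Y)+r_N(\cC(M|Y))=r_P(Y)=r_K(Y)$ for all $Y$, so $M^N=K$ as required.

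I would first settle the case in which the witness $P$ is representable over a field $\bF$, say by columns $\varphi(e)$; write $V_S=\operatorname{span}\{\varphi(e):e\in S\}$. For each circuit $C$ of $M=P/X$, the restriction $P|(X\cup C)$ has nullity one, hence a unique circuit $D_C$, and $C\subseteq D_C$ because $C$ is a circuit of $P/X$; thus $D_C=C\cup Z_C$ with $Z_C\subseteq X$, and its dependency vector gives $w_C:=\sum_{e\in C}\lambda_e\varphi(e)=-\sum_{e\in Z_C}\lambda_e\varphi(e)\in V_C\cap V_X$, with every $\lambda_e\neq 0$ and $w_C=0$ exactly when $C$ is a circuit of $P$. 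Let $N$ be the matroid on $\cC(M)$ represented by the family $\{w_C\}_{C\in\cC(M)}$, so that the loops of $N$ are precisely the circuits of $M$ that are circuits of $P$. The key lemma is that
\[
\operatorname{span}\{w_C:\text{$C$ is a circuit of }M|Y\}=V_Y\cap V_X\qquad\text{for all }Y\subseteq E(M),
\]
which I would prove by induction on support size: any $u=\sum_{e\in Y}\mu_e\varphi(e)$ lying in $V_X$ has image zero in a representation of $M$, so its support is dependent in $M|Y$ and contains a circuit $C_1$ of $M|Y$, and subtracting a suitable scalar multiple of $w_{C_1}$ strictly shrinks the support while remaining in $V_Y\cap V_X$. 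Taking dimensions in this identity gives $r_N(\cC(M|Y))=\dim(V_Y\cap V_X)=r_P(X)+r_P(Y)-r_P(X\cup Y)$, the required identity. Condition $(\ref{construction}*)$ then follows: for a perfect collection $\cC'$ and a circuit $C'\subseteq\cup\cC'$ of $M$, the vectors $\{w_C:C\in\cC'\}$ are linearly independent (each $C\in\cC'$ has an element lying in no other member), they number $|\cC'|=|\cup\cC'|-r_M(\cup\cC')\ge r_P(\cup\cC')-r_M(\cup\cC')=\dim(V_{\cup\cC'}\cap V_X)$, and they lie in $V_{\cup\cC'}\cap V_X$, so they form a basis of it; since the lemma puts $w_{C'}$ in $V_{\cup\cC'}\cap V_X$, we get $C'\in\cl_N(\cC')$. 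When $K$ is the free matroid on $E(M)$ this specialises to Proposition \ref{binary}, with $N$ a derived matroid of $M$.

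The case of an arbitrary, non-representable witness $P$ is where I expect the real difficulty, and it is why the statement is only conjectured. One natural approach is induction on $k$: choosing $x\in X$ so that the step is nontrivial, the matroid $M_1:=(P\del x)/(X-x)$ is an elementary lift of $M$, so $M_1=M^{N_0}$ for a rank-one matroid $N_0$ on $\cC(M)$ by Theorem \ref{brylawski}, while $K$ is a lift of $M_1$ of rank $k-1$; induction provides a matroid $N_1$ on $\cC(M_1)$ satisfying $(\ref{construction}*)$ with $K\cong M_1^{N_1}$, and one must transport $N_1$ back to a matroid on $\cC(M)$. The obstruction is that $\cC(M_1)$ is not faithfully parametrised by $\cC(M)$: besides the loops of $N_0$, the circuits of $M_1$ are the minimal subsets of $E(M)$ of $M$-nullity two that contain a circuit of $M$ not among the loops of $N_0$, and such a set typically contains several circuits of $M$, so there is no evident functorial pullback. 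A second approach is to mimic the representable construction abstractly, using the canonical circuits $D_C=C\cup Z_C$ of $P$: one seeks a matroid $N$ on $\cC(M)$ in which a collection $\cC$ is independent exactly when the ``$X$-parts'' $Z_C$ are as independent as the closure structure of $P$ permits. Here the obstruction is verifying the exchange axiom: unlike in the representable setting, the invariant playing the role of $r_N(\cC)$ is genuinely not a function of $\cup\cC$, so submodularity is not automatic, and I expect that establishing it will require a new idea --- perhaps building a single matroid extending $P$ that simultaneously records every circuit of $M$, which is exactly the kind of amalgamation a modular-cut obstruction can prevent. I regard this exchange/submodularity step, in whichever guise, as the main obstacle.
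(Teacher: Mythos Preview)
The statement you are addressing is a \emph{conjecture} in the paper: Section~7 explicitly leaves it open, recording only the trivial cases $r(M)=0$ and corank at most two, a few small examples, and the remark that Propositions~\ref{project N} and~\ref{binary} provide ``evidence'' for representable $M$. There is therefore no proof in the paper to compare against, and your own framing --- a complete argument for a special case together with an analysis of the obstacles in general --- is the right posture.

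Your representable-witness case is stronger than anything the paper claims: you would establish Conjecture~\ref{converse2} whenever some $P$ witnessing the lift is $\bF$-representable, whereas the paper only shows that \emph{some} lifts of a representable $M$ arise from the construction. The reduction to a single rank identity and the key lemma $\operatorname{span}\{w_C:C\in\cC(M|Y)\}=V_Y\cap V_X$ are both correct, and your discussion of the inductive and amalgamation approaches in the non-representable case is a fair account of where the difficulty lies.

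There is, however, one genuine gap in your verification of $(\ref{construction}*)$. You assert that $\{w_C:C\in\cC'\}$ is linearly independent ``because each $C\in\cC'$ has an element lying in no other member,'' but that reasoning applies to the \emph{coefficient} vectors $(\lambda^{(C)}_e)_e$, not to their images $w_C=\sum_e\lambda^{(C)}_e\varphi(e)$. A concrete failure: take $P=U_{2,4}$ on $\{1,2,3,x\}$ over $\GF(3)$ with $\varphi(1)=(1,0)$, $\varphi(2)=(0,1)$, $\varphi(3)=(1,1)$, $\varphi(x)=(1,2)$ and $X=\{x\}$; then $M=U_{1,3}$, the collection $\cC'=\{\{1,2\},\{1,3\}\}$ is perfect, yet $w_{\{1,2\}}=w_{\{1,3\}}=(2,1)$. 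The fix is short: argue instead with the full dependency vectors $u_C\in\bF^{X\cup(\cup\cC')}$ of the $P$-circuits $D_C$. These \emph{are} linearly independent by the private-element argument, and they lie in $\ker\Psi$ where $\Psi:\bF^{X\cup(\cup\cC')}\to V$ sends $(\mu_e)$ to $\sum\mu_e\varphi(e)$; since $\dim\ker\Psi=|X\cup(\cup\cC')|-r_P(X\cup(\cup\cC'))=|\cup\cC'|-r_M(\cup\cC')=|\cC'|$, they form a basis. For any circuit $C'\subseteq\cup\cC'$ of $M$ one then has $u_{C'}=\sum_{C\in\cC'}\alpha_C u_C$, and applying the linear map $u\mapsto -\Psi(\pi_X(u))$ yields $w_{C'}=\sum_{C\in\cC'}\alpha_C w_C$, hence $C'\in\cl_N(\cC')$ as required.
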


This is certainly true if $r(M) = 0$; then the set of circuits of $M$ is precisely $E(M)$, and we can take $N = K$.
It is also true if $M$ has corank at most two, by Theorem \ref{brylawski} and the fact that the rank-$2$ uniform matroid on the circuits of $M$  satisfies $(\ref{construction}*)$.
It is tedious but not difficult to check that it is true for certain small matroids of corank three, such as $U_{1,4}$ or $U_{2,5}$.
However, Conjecture \ref{converse2} seems difficult to prove even for the very basic class of rank-$1$ uniform matroids.

In general, if there exists a matroid $N$ so that $M^N$ is the free matroid on $E(M)$, then Proposition \ref{project N} shows that Theorem \ref{construction} can be used to construct a potentially huge number of lifts of $M$, and this provides some evidence that Conjecture \ref{converse2} is true for $M$.
In particular, the constructions of the previous section provide evidence that this conjecture is true for representable matroids and matroids of corank three.

One way to prove Conjecture \ref{converse2} would be to explicitly construct the matroid $N$, given $K$ and $M$.
We make the following conjecture in this direction:

\begin{conjecture} \label{matroid}
Let $M$ be a matroid, and let $K$ be a lift of $M$. 
Let $\cI$ be the collection of subsets $\cC'$ of $\cC(M)$ for which there is no matroid $K'$ such that 
\begin{itemize}
\item $K'$ is a projection of $K$ and a lift of $M$, 

\item each set in $\cC'$ is a circuit of $K'$, and 

\item $r(K)-r(K')<|\cC'|$.
\end{itemize}
Then $\cI$ is the collection of independent sets of a matroid $N$ on $\cC(M)$, and $K\cong M^N$.
\end{conjecture}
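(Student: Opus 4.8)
The plan is to recast $\cI$ via a rank function and then to reduce the statement to Conjecture~\ref{converse2} together with a ``relative'' strengthening of it. Fix a lift $K$ of $M$, put $k=r(K)-r(M)$, and for $\cC'\subseteq\cC(M)$ let $f(\cC')$ be the minimum of $r(K)-r(K')$ over all matroids $K'$ in the interval $[M,K]$ (that is, $K'$ is a projection of $K$ and a lift of $M$) in which every circuit in $\cC'$ is a circuit. This minimum is over a nonempty family, since $K'=M$ qualifies, so $0\le f(\cC')\le k$, and by definition $\cC'\in\cI$ if and only if $f(\cC')\ge|\cC'|$. It thus suffices to show that $f$ is the rank function of a matroid $N$ on $\cC(M)$ with $K\cong M^N$: then $\cI=\{\cC':f(\cC')=|\cC'|\}$ is precisely the collection of independent sets of $N$, using $f(\cC')\le|\cC'|$ (verified below), and the conjecture follows.

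Next I would record some facts that hold unconditionally. \emph{(a)} Projections compose, and so do lifts; in particular a matroid in $[M,K']$ with $K'\in[M,K]$ also lies in $[M,K]$. \emph{(b)} If $N'$ is a quotient of a matroid $N$ on $\cC(M)$ satisfying $(\ref{construction}*)$, then $M^{N'}$ is a lift of $M$ (by Proposition~\ref{project N} and Theorem~\ref{construction}), and, since $N'$ is a quotient of $N$ and $\cC(M|Y)\subseteq\cC(M|X)$ for $Y\subseteq X$, the rank formula of Theorem~\ref{construction} shows $M^{N'}$ is a projection of $M^N$ with $r(M^N)-r(M^{N'})=r(N)-r(N')$. \emph{(c)} If $N'$ is any quotient of $N$ with $\cC'\subseteq\cl_{N'}(\varnothing)$, then applying the quotient rank inequality to the pair $\cC'\subseteq\cC(M)$ gives $r(N)-r(N')\ge r_N(\cC')$, while iterating $r_N(\cC')$ elementary quotients---each through the principal modular cut generated by the current closure of $\cC'$---produces a quotient attaining equality. \emph{(d)} A short lemma shows that, given $K'\in[M,K]$ and a circuit $C$ of $M$, there is an elementary projection of $K'$ lying in $[M,K]$ in which $C$ is a circuit; iterating over $\cC'$ yields $f(\cC')\le|\cC'|$, and hence also that $\cI$ is closed under taking subsets.

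Now grant Conjecture~\ref{converse2}, so $K\cong M^N$ for some $N$ on $\cC(M)$ satisfying $(\ref{construction}*)$, with $r(N)=k$. Facts \emph{(b)} and \emph{(c)} give $f(\cC')\le r_N(\cC')$ for every $\cC'$. For the reverse inequality I would invoke the \emph{relative} form of Conjecture~\ref{converse2}: every $K'\in[M,K]$ is isomorphic to $M^{N''}$ for some quotient $N''$ of $N$ satisfying $(\ref{construction}*)$. Given such a $K'$ in which every $C\in\cC'$ is a circuit, the circuits of $M$ that are circuits of $M^{N''}$ are exactly the loops of $N''$, so $\cC'\subseteq\cl_{N''}(\varnothing)$, and fact \emph{(c)} gives $r(K)-r(K')=r(N)-r(N'')\ge r_N(\cC')$. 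Hence $f=r_N$, so $f$ is a matroid rank function, $\cI$ is the collection of independent sets of $N$, and $K\cong M^N$, as required.

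The difficulty is concentrated entirely in the two inputs: Conjecture~\ref{converse2} itself (we do not know \emph{a priori} that $K$ has the form $M^N$) and, more severely, its relative version, which is the assertion that the interval $[M,K]$ in the quotient order is isomorphic to the full quotient lattice of the witnessing matroid $N$ on $\cC(M)$. I expect the relative statement to be the genuine obstacle, as it is what pins down $N$ uniquely from $(M,K)$ and so justifies the explicit description of $\cI$. A natural attack is to prove Conjecture~\ref{converse2} by induction on $k$: choose a matroid $K_1$ covering $K$ in $[M,K]$, apply the inductive hypothesis to the lift $K_1$ of $M$ to obtain $N_1$ on $\cC(M)$ with $K_1\cong M^{N_1}$, and then show that $N_1$ is an elementary quotient of a matroid $N$ on $\cC(M)$ satisfying $(\ref{construction}*)$ with $K\cong M^N$. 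Maintaining $(\ref{construction}*)$ along the way, and verifying that distinct covers of $K$ are mutually compatible so that the resulting matroids on $\cC(M)$ agree, is where I expect the real work to lie.
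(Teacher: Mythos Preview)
The statement you are attempting is presented in the paper as an open conjecture; the paper offers no proof and even remarks that the special case where $K$ is the free matroid ``may be easier to prove.'' So there is no paper proof to compare your proposal against.

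Your proposal is, correspondingly, not a proof but a reduction strategy, and you are candid about this. You show that Conjecture~\ref{matroid} would follow from Conjecture~\ref{converse2} together with a \emph{relative} strengthening of it: that every $K'$ in the interval $[M,K]$ arises as $M^{N''}$ for some quotient $N''$ of the witnessing matroid $N$. The formal reduction is sound---facts \emph{(b)} and \emph{(c)} are correct, and the two inequalities $f\le r_N$ and $f\ge r_N$ do follow from the two inputs as you describe. It is worth noting that your argument, read backwards, shows that Conjecture~\ref{matroid} would force the matroid $N$ in Conjecture~\ref{converse2} to be \emph{unique} (since then $r_N=f$ is intrinsically determined by the pair $(M,K)$); this is a nontrivial consequence in light of the paper's observation that distinct derived matroids of a representable $M$ can all yield the same free $M^N$. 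So your relative hypothesis is at least as strong as this uniqueness statement, which underscores that it is the genuine obstacle, as you say.

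One small caveat: your fact \emph{(d)}---that for $K'\in[M,K]$ and a circuit $C$ of $M$ there is an elementary projection of $K'$ lying in $[M,K]$ in which $C$ is a circuit---is not quite as immediate as you suggest. You need the elementary quotient both to kill the independence of $C$ in $K'$ and to remain a lift of $M$; arranging these simultaneously takes an argument (for instance, via a suitable single-element extension witnessing the lift $K'$ of $M$). This is a technical point rather than a conceptual one, but it deserves a line of justification.
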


If $K$ is the free matroid on $E(M)$, then $K'$ is only required to be a lift of $M$, since every matroid on $E(M)$ is a projection of the free matroid on $E(M)$.
Conjecture \ref{matroid} may be easier to prove in this special case.
\end{section}

%%% the dual

\begin{section}{The Dual Construction}
All of the previous results about lifts of a matroid $M$ give rise to results about projections of $M^*$.
%In this section, we will use Theorem \ref{main} to prove Theorem \ref{dual main}.
%We will freely use the fact that each hyperplane of a matroid is the complement of a cocircuit.
For a set $E$ and a set $\cX$ of subsets of $E$, we write $\cap\cX$ for $\cap_{X\in\cX}X$.
Given a matroid $K$, we write $\cH(K)$ for the set of hyperplanes of $K$.
We say that a collection $\cH'$ of hyperplanes of $K$ is \emph{perfect} if $r_K(\cap \cH')=r(K)-|\cH'|$, and no hyperplane in $\cH'$ contains the intersection of the others.
It is easy to check that $\cH'$ is a perfect collection of hyperplanes of $K$ if and only if $\{E-H\colon H\in\cH'\}$ is a perfect collection of circuits of $K^*$.
The following is a restatement of  Theorem \ref{dual main}.
We omit the proof, as it is a straightforward application of duality.

% dual construction

\begin{theorem} \label{dual construction}
Let $K$ be a matroid, and let $N$ be a matroid on the set of hyperplanes of $K$ so that
\begin{enumerate}[$(\ref{dual construction}*)$]
\item if $\cH'$ is a perfect collection of hyperplanes of $K$, and $H$ is a hyperplane of $K$ that contains $\cap\cH'$, then $H\in \cl_N(\cH').$
\end{enumerate}
Then the function $r_{K_N}$ defined, for all $X \subseteq E(K)$, by 
$$r_{K_N}(X)=r_K(X)-r(N)+r_N(\{H\in \cH(K)\colon X\subseteq H\})$$
is the rank function of a rank-$r(N)$ projection $K_N$ of $K$.
\end{theorem}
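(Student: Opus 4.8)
The plan is to deduce Theorem \ref{dual construction} from Theorem \ref{construction} applied to the dual matroid $K^*$, transporting all the data along the complementation bijection between hyperplanes and cocircuits of $K$.

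First I would set $M = K^*$ and recall that $\cC(K^*)$ is exactly the set of cocircuits of $K$, so that $H \mapsto E - H$ is a bijection from $\cH(K)$ onto $\cC(K^*)$. Using this bijection I would transport $N$ to the matroid $\tilde{N}$ on $\cC(K^*)$ defined by $r_{\tilde{N}}(\mathcal D) = r_N(\{H \in \cH(K) : E - H \in \mathcal D\})$ for $\mathcal D \subseteq \cC(K^*)$; then $r(\tilde{N}) = r(N)$, and $\cl_{\tilde{N}}$ corresponds to $\cl_N$ under the bijection.

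Next I would verify that $\tilde{N}$ satisfies $(\ref{construction}*)$ relative to $M = K^*$. By the observation recorded just before the statement, a collection $\cC'$ of circuits of $K^*$ is perfect if and only if $\cC' = \{E - H : H \in \cH'\}$ for a perfect collection $\cH'$ of hyperplanes of $K$; moreover a circuit $E - H'$ of $K^*$ is contained in $\cup\cC' = E - \cap\cH'$ precisely when $H' \supseteq \cap\cH'$. In that situation $(\ref{dual construction}*)$ asserts $H' \in \cl_N(\cH')$, which by the definition of $\tilde{N}$ is exactly $E - H' \in \cl_{\tilde{N}}(\cC')$; hence $(\ref{construction}*)$ holds for $\tilde{N}$. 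Theorem \ref{construction} then yields a rank-$r(N)$ lift $L := (K^*)^{\tilde{N}}$ of $K^*$, and I would define $K_N := L^*$. Since the dual of a lift of $K^*$ is a quotient of $K$, and since $r(L) = r(K^*) + r(N)$ forces $r(L^*) = r(K) - r(N)$, this $K_N$ is a rank-$r(N)$ projection of $K$.

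It then remains to check that $r_{K_N}$ agrees with the formula in the statement. Starting from the dual rank identity $r_{K_N}(X) = |X| + r_L(E - X) - r_L(E)$, I would substitute $r_L(E) = r(K^*) + r(N)$ together with the rank function of $L$ from Theorem \ref{construction}, expand $r_{K^*}(E - X)$ via the dual rank identity for $K^*$, and use the fact that the circuits of $K^*|(E - X)$ are exactly the sets $E - H$ with $H \in \cH(K)$ and $X \subseteq H$, so that $r_{\tilde{N}}$ of that set equals $r_N(\{H \in \cH(K) : X \subseteq H\})$. The $|E|$ and $|X|$ contributions cancel, leaving $r_{K_N}(X) = r_K(X) - r(N) + r_N(\{H \in \cH(K) : X \subseteq H\})$, as required. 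The only step requiring real care — rather than mechanical dualization — is lining up the two notions of ``perfect'' and the two closure conditions under complementation; once that dictionary is recorded, everything else is bookkeeping, which is why this theorem is genuinely just a corollary of Theorem \ref{construction}.
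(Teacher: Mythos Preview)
Your proposal is correct and matches the paper's approach exactly: the paper states that the proof is ``a straightforward application of duality'' and omits it, but the (commented-out) argument in the source does precisely what you do---transport $N$ to a matroid $N'$ on $\cC(K^*)$ via $H\mapsto E-H$, verify $(\ref{construction}*)$ for $(K^*,N')$ using the correspondence of perfect collections, set $K_N=((K^*)^{N'})^*$, and recover the rank formula via the dual rank identity. There is nothing to add.
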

\begin{comment}
\begin{proof}
Let $E=E(K)$.
Define a matroid $N'$ on $\cC(K^*)$ by $r_{N'}(\cC')=r_{N}(\{E-C\colon C\in\cC'\})$, for each set $\cC'\subseteq \cC(K^*)$.
Clearly the matroids $N'$ and $N$ are isomorphic.

We first show that $(K^*,N')$ satisfies $(\ref{construction}*)$.
Let $\cC'$ be a perfect collection of circuits of $K^*$, and let $C$ be a circuit of $K^*$ that is contained in $\cup\cC'$.
Then $\{E-C'\colon C'\in\cC'\}$ is a perfect collection of hyperplanes of $K$, and $E-C$ is a hyperplane of $K$ that contains $\cap \{E-C'\colon C'\in\cC'\}$.
Then $(\ref{dual construction}*)$ implies that $(E-C)\in \cl_N(\{E-C'\colon C'\in\cC'\})$.
By the definition of $N'$, this implies that $C\in \cl_{N'}(\cC')$, and so $(K^*,N')$ satisfies $(\ref{construction}*)$.

By Theorem \ref{construction}, the matroid $(K^*)^{N'}$ is a rank-$r(N)$ lift of $K^*$.
Thus, the matroid $((K^*)^{N'})^*$ is a rank-$r(N)$ projection of $K$.
Therefore, we define $K_N=((K^*)^{N'})^*$, and to finish the proof it suffices to determine the rank function of $K_N$.
Each set $X\subseteq E(K)$ satisfies
\setcounter{equation}{0}
\begin{align}
r_{K_N}(X)&=r_{(K^*)^{N'}}(E-X)+|X|-r((K^*)^{N'})\\
&=r_{(K^*)}(E-X)+r_{N'}(\cC(K^*|(E-X)))+|X|-r(K^*)-r(N')\\
&=r_K(X)-r(N')+r_{N'}(\cC(K^*|(E-X)))\\
&=r_K(X)-r(N)+r_N(\{H\in \cH(K)\colon X\subseteq H\}),
\end{align} 
where (1) and (3) hold by the definition of the rank function of the dual of a matroid, (2) holds by the rank function of $(K^*)^{N'}$, and (4) holds by the definition of $N'$.
\end{proof}
\end{comment}

Theorem \ref{crapo} states that every elementary projection of a given matroid $K$ arises from a linear subclass of hyperplanes of $K$.
More generally, we conjecture that every projection of $K$ arises from the construction of Theorem \ref{dual construction}; this is dual to Conjecture \ref{converse2}.
We close by stating the dual of Conjecture \ref{matroid}.

\begin{conjecture} \label{dual matroid}
Let $K$ be a matroid, and let $M$ be a projection of $K$. 
Let $\cI$ denote the collection of subsets $\cH'$ of $\cH(K)$ for which there is no matroid $K'$ such that 
\begin{itemize}
\item $K'$ is a projection of $K$ and a lift of $M$, 

\item each set in $\cH'$ is a hyperplane of $K'$, and 

\item $r(K')-r(M)<|\cH'|$.
\end{itemize}
Then $\cI$ is the collection of independent sets of a matroid $N$ on $\cH(K)$, and $M \cong K_N$.
\end{conjecture}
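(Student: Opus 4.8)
The plan is to deduce Conjecture~\ref{dual matroid} from its dual, Conjecture~\ref{matroid}, which I take as given, via a routine application of matroid duality. Write $E=E(K)$, which is also the ground set of $K^*$, $M$, and $M^*$; recall that $M$ is a projection of $K$ if and only if $M^*$ is a lift of $K^*$. The bridge between the two settings is the complement bijection $\psi\colon \cC(K^*)\to \cH(K)$ given by $\psi(C)=E-C$, which is a bijection because the circuits of $K^*$ are exactly the complements of the hyperplanes of $K$. First I would apply Conjecture~\ref{matroid} with the matroid in the role of ``$M$'' being $K^*$ and its lift in the role of ``$K$'' being $M^*$. This produces the collection $\cI'$ of those subsets $\cC'$ of $\cC(K^*)$ for which there is no matroid $L$ that is a projection of $M^*$, is a lift of $K^*$, has every member of $\cC'$ as a circuit, and satisfies $r(M^*)-r(L)<|\cC'|$; Conjecture~\ref{matroid} asserts that $\cI'$ is the family of independent sets of a matroid $N'$ on $\cC(K^*)$ (which, for $(K^*)^{N'}$ to be meaningful, must satisfy $(\ref{construction}*)$), and that $M^*\cong (K^*)^{N'}$ under the obvious identification of ground sets.

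Next I would translate the conditions defining $\cI'$ across duality. For a candidate $L$, put $J=L^*$. Then $L$ is a projection of $M^*$ and a lift of $K^*$ if and only if $J$ is a lift of $M$ and a projection of $K$; a member $C$ of $\cC'$ is a circuit of $L$ if and only if $E-C=\psi(C)$ is a hyperplane of $J$; and $r(M^*)-r(L)=r(J)-r(M)$ while $|\cC'|=|\psi(\cC')|$. Hence, writing $\cH'=\psi(\cC')$, we have $\cC'\in\cI'$ if and only if there is no matroid $J$ that is a projection of $K$, is a lift of $M$, has every member of $\cH'$ as a hyperplane, and satisfies $r(J)-r(M)<|\cH'|$ --- precisely the condition defining membership of $\cH'$ in the collection $\cI$ of Conjecture~\ref{dual matroid}. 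Therefore $\psi(\cI')=\cI$, so transporting $N'$ along $\psi$ gives a matroid $N$ on $\cH(K)$ whose independent sets are exactly $\cI$, and this $N$ satisfies $r_{N'}(\cC')=r_N(\{E-C\colon C\in\cC'\})$ --- exactly the matroid associated to $N'$ in the duality argument behind Theorem~\ref{dual construction}.

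To finish, I would note that since $N'$ satisfies $(\ref{construction}*)$, that same duality argument --- interchanging perfect collections of circuits of $K^*$ with perfect collections of hyperplanes of $K$ --- shows that $N$ satisfies $(\ref{dual construction}*)$. Hence $K_N$ is defined, equals $((K^*)^{N'})^*$, and so $K_N=((K^*)^{N'})^*\cong (M^*)^*=M$, as required. Running the same argument with the roles of $K$ and $M$ replaced by $K^*$ and $M^*$ shows, conversely, that Conjecture~\ref{dual matroid} implies Conjecture~\ref{matroid}; the two are equivalent.

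The main obstacle is that Conjecture~\ref{matroid} is itself open, so the above is only a conditional reduction and the duality bookkeeping is routine. An unconditional proof would instead require verifying directly that $\cI$ obeys the independence axioms and that the resulting $N$ satisfies $(\ref{dual construction}*)$ with $K_N\cong M$. Even downward closure of $\cI$ is not transparent: enlarging $\cH'$ simultaneously strengthens the demand that a candidate $K'$ have \emph{all} of $\cH'$ as hyperplanes and raises the threshold $|\cH'|$ that $r(K')-r(M)$ must reach, so the ``no bad $K'$'' condition is not obviously monotone, and one would need a finer understanding of which matroids between $K$ and $M$ in the lift--projection order can realize a prescribed family of hyperplanes.
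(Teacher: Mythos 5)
Your duality bookkeeping is correct: the complementation bijection $\psi(C)=E-C$ between $\cC(K^*)$ and $\cH(K)$ does carry the defining condition of the collection in Conjecture~\ref{matroid} (applied with base matroid $K^*$ and lift $M^*$) exactly onto the defining condition of $\cI$ in Conjecture~\ref{dual matroid}, since $L\mapsto L^*$ exchanges ``projection of $M^*$ and lift of $K^*$'' with ``lift of $M$ and projection of $K$,'' circuits of $L$ with complements of hyperplanes of $L^*$, and $r(M^*)-r(L)$ with $r(L^*)-r(M)$. This is precisely the sense in which the paper calls the statement ``the dual of Conjecture~\ref{matroid},'' and the final step $K_N=((K^*)^{N'})^*$ matches the duality argument behind Theorem~\ref{dual construction}.

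However, this is not a proof of the statement, and you have correctly diagnosed why: the statement is an open conjecture in the paper, the paper offers no proof of it, and your argument is conditional on Conjecture~\ref{matroid}, which is equally open (the paper proves it only in trivial cases such as $r(M)=0$ or corank at most two). What you have established is the \emph{equivalence} of the two conjectures under duality, which is genuine but routine content that the paper already takes for granted by presenting one as the dual of the other. Your closing paragraph identifies the real difficulty accurately: an unconditional proof would have to verify the independence axioms for $\cI$ directly (even downward closure is not obvious, since enlarging $\cH'$ changes both sides of the defining inequality) and then show $K_N\cong M$; none of that is addressed by the duality translation.
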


\begin{comment}
\begin{conjecture} \label{dual converse}
Let $K$ be a matroid. For every projection $M$ of $K$, there is a matroid $N$ on the hyperplanes of $K$ so that $N$ satisfies $(\ref{dual construction}*)$, and $M\cong K_N$. 
\end{conjecture}
\end{comment}

\end{section}

\begin{section}*{Acknowledgements}
The author would like to thank James Oxley for his helpful comments on the manuscript.
\end{section}

%\newpage

\end{document}